\long\def\commentout#1{}
\newif\ifprint
\renewcommand{\mathbf}[1]{\bm{#1}} 
	\definecolor{linkred}{rgb}{0,0,0} 
	\definecolor{linkblue}{rgb}{0,0,0} 
	\definecolor{linkred}{rgb}{0.7,0.2,0.2}
	\definecolor{linkblue}{rgb}{0,0.2,0.6}
\numberwithin{equation}{section} 
\def\ps@handbook{\def\@oddhead{\hfill \leftmark \hfill\thepage }
\def\@evenhead{\thepage \hfill \rightmark \hfill}
\def\@oddfoot{}
\def\@evenfoot{}}
\def\@evenhead{}
\def\@oddfoot{}
\def\@evenfoot{\hfill\copyright\ China Higher Education Press}
\def\list#1#2{\ifnum \@listdepth >5\relax \@toodeep \else \global
\advance \@listdepth\@ne \fi \rightmargin \z@ \listparindent\z@
\itemindent\z@ \csname @list\romannumeral\the\@listdepth\endcsname
\def\@itemlabel{#1}\let\makelabel\@mklab \@nmbrlistfalse #2\relax
\@trivlist \parskip -\parsep \parindent\listparindent \advance
\linewidth -\rightmargin \advance\linewidth -\leftmargin \advance
\@totalleftmargin \leftmargin \parshape \@ne \@totalleftmargin
\linewidth \ignorespaces}
\renewcommand*\l@section{\@tocline{1}{0pt}{0em}{1.75em}{}}
\renewcommand*\l@subsection{\@tocline{2}{0pt}{1.75em}{2em}{}} 
\renewcommand{\theequation}{\thesection.\arabic{equation}}
\def\thebibliography#1{\section*{References}
\list{[\arabic{enumi}]}{\settowidth \labelwidth{[#1]} \leftmargin
\labelwidth \advance \leftmargin \labelsep \usecounter{enumi}}
\def\newblock{\hskip .11em plus .33em minus .07em} \sloppy
\clubpenalty 4000 \widowpenalty 4000 \sfcode`\.=1000 \relax}
\titleformat{\section}{\normalfont\large\bfseries}{\thesection.}{0.5em}{}[\kern0.em]
\titleformat{\subsection}{\normalfont\bfseries}{\thesubsection.}{0.3em}{}[\kern0.em]
\titleformat{\subsubsection}[runin]{\normalfont\bfseries}{\thesubsubsection.}{0.5em}{}[\kern0.5em]
\def\fofsubsubsection#1{\refstepcounter{equation}\subsubsection*{\theequation.\kern0.25em #1}}
\def\foisubsubsection#1{\refstepcounter{equation}\subsubsection*{\kern\parindent\theequation.\kern0.25em #1}}
\theoremstyle{plain} 
\newtheorem{theorem}{\sc Theorem}[section]
\newtheorem{lemma}[theorem]{\sc Lemma}
\newtheorem{proposition}[theorem]{\sc Proposition}
\theoremstyle{definition}
\newtheorem{definition}[theorem]{\sc Definition}
\newtheorem{example}[theorem]{\sc Example}
\newtheorem{examples}[theorem]{\sc Examples}
\newtheorem{remarks}[theorem]{\sc Remarks}
\newcommand\bA{{\mathbb A}}
\newcommand\bG{{\mathbb G}}
\newcommand\bN{{\mathbb N}}
\newcommand\bP{{\mathbb P}}
\newcommand\bQ{{\mathbb Q}}
\newcommand\bR{{\mathbb R}}
\newcommand\bZ{{\mathbb Z}}
\newcommand\cD{{\mathcal D}}
\newcommand\cF{{\mathcal F}}
\newcommand\cG{{\mathcal G}}
\newcommand\cH{{\mathcal H}}
\newcommand\cI{{\mathcal I}}
\newcommand\cJ{{\mathcal J}}
\newcommand\cM{{\mathcal M}}
\newcommand\cN{{\mathcal N}}
\newcommand\cO{{\mathcal O}}
\newcommand\cR{{\mathcal R}}
\newcommand\cV{{\mathcal V}}
\newcommand\cW{{\mathcal W}}
\newcommand\cX{{\mathcal X}}
\newcommand\cZ{{\mathcal Z}}
\newcommand\fg{{\mathfrak g}}
\newcommand\fh{{\mathfrak h}}
\newcommand\ft{{\mathfrak t}}
\newcommand\tg{\tilde{g}}
\renewcommand\th{\tilde{h}}
\newcommand\tlambda{\tilde{\lambda}}
\newcommand\tp{\tilde{p}}
\newcommand\tq{\tilde{q}}
\newcommand\tG{\tilde{G}}
\newcommand\tM{\tilde{M}}
\newcommand\tV{\tilde{V}}
\newcommand\tX{\tilde{X}}
\newcommand\tY{\tilde{Y}}
\newcommand\tZ{\tilde{Z}}
\newcommand\tcZ{\tilde{\mathcal Z}}
\newcommand\ualpha{\underline{\alpha}}
\newcommand\ulambda{\underline{\lambda}}
\newcommand\ad{{\rm ad}}
\newcommand\gr{{\rm gr}}
\newcommand\id{{\rm id}}
\renewcommand\log{{\rm log}}
\newcommand\reg{{\rm reg}}
\newcommand\rk{{\rm rk}}
\newcommand\Aut{{\rm Aut}}
\newcommand\GL{{\rm GL}}
\newcommand\Gr{{\rm Gr}}
\newcommand\Hilb{{\rm Hilb}}
\newcommand\Hom{{\rm Hom}}
\newcommand\Ind{{\rm Ind}}
\newcommand\Irr{{\rm Irr}}
\newcommand\M{{\rm M}}
\newcommand\Mor{{\rm Mor}}
\newcommand\Pic{{\rm Pic}}
\newcommand\Proj{{\rm Proj}}
\newcommand\PGL{{\rm PGL}}
\newcommand\PSL{{\rm PSL}}
\newcommand\SL{{\rm SL}}
\newcommand\Spec{{\rm Spec}}
\newcommand\Sym{{\rm Sym}}
\newcommand\Univ{{\rm Univ}}
\title{Invariant Hilbert schemes}
\author{Michel Brion}
\date{}
\begin{document}

\begin{abstract}
This is a survey article on moduli of affine schemes equipped with an action of a reductive group.
The emphasis is on examples and applications to the classification of spherical varieties.
\end{abstract}

\maketitle

\tableofcontents

\section{Introduction}
\label{sec:introduction}

The Hilbert scheme is a fundamental object of projective algebraic geometry; it parametrizes
those closed subschemes of the projective space $\bP^N$ over a field $k$, that have a prescribed 
Hilbert polynomial. Many moduli schemes (for example, the moduli space of curves of a fixed genus)
are derived from the Hilbert scheme by taking locally closed subschemes and geometric invariant theory 
quotients. 

In recent years, several versions of the Hilbert scheme have been constructed in the setting of 
algebraic group actions on affine varieties. One of them, the \emph{$G$-Hilbert scheme} $G$-$\Hilb(V)$, 
is associated to a linear action of a finite group $G$ on a finite-dimensional complex vector space $V$; 
it classifies those $G$-stable subschemes $Z \subset V$ such that the representation of $G$ in the coordinate 
ring $\cO(Z)$ is the regular representation. The $G$-Hilbert scheme comes with a morphism to the quotient
variety $V/G$, that associates with $Z$ the point $Z/G$. This \emph{Hilbert-Chow morphism} has an inverse over 
the open subset of $V/G$ consisting of orbits with trivial isotropy group, as every such orbit $Z$ is 
a point of $G$-$\Hilb(V)$. In favorable cases (e.g. in dimension $2$), the Hilbert-Chow morphism is 
a desingularization of $V/G$; this construction plays an essential role in the McKay correspondence
(see e.g. \cite{IN96, IN99, BKR01}).

Another avatar of the Hilbert scheme is the \emph{multigraded Hilbert scheme} introduced by Haiman and
Sturmfels in \cite{HS04}; it parametrizes those homogeneous ideals $I$ of a polynomial ring 
$k[t_1,\ldots,t_N]$, graded by an abelian group $\Gamma$, such that each homogeneous component of 
the quotient $k[t_1,\ldots,t_N]/I$ has a prescribed (finite) dimension. In contrast to the construction
(due to Grothendieck) of the Hilbert scheme which relies on homological methods, that of Haiman and
Sturmfels is based on commutative algebra and algebraic combinatorics only; it is valid over any base 
ring $k$. Examples of multigraded Hilbert schemes include the Grothendieck-Hilbert scheme (as follows 
from a result of Gotzmann, see \cite{Go78}) and the \emph{toric Hilbert scheme} (defined by Peeva and Sturmfels
in \cite{PS02}) where the homogeneous components of $I$ have codimension $0$ or $1$.

The invariant Hilbert scheme may be viewed as a common generalization of $G$-Hilbert schemes and 
multigraded Hilbert schemes; given a complex reductive group $G$ and a finite-dimensional $G$-module $V$, 
it classifies those closed $G$-subschemes $Z \subset V$ such that the $G$-module $\cO(Z)$ has prescribed
(finite) multiplicities. If $G$ is diagonalizable with character group $\Lambda$, then $Z$ corresponds to
a homogeneous ideal of the polynomial ring $\cO(V)$ for the $\Lambda$-grading defined by the $G$-action; 
we thus recover the multigraded Hilbert scheme. But actually, the construction of the invariant Hilbert 
scheme in \cite{AB05} relies on a reduction to the multigraded case via highest weight theory.

The Hilbert scheme of $\bP^N$ is known to be projective and connected; invariant Hilbert schemes are 
quasi-projective (in particular, of finite type) but possibly non-projective. Also, they may be disconnected, 
already for certain toric Hilbert schemes (see \cite{Sa05}). One may wonder how such moduli schemes can exist 
in the setting of affine algebraic geometry, since for example any curve in the affine plane is a flat limit 
of curves of higher degree. In fact, the condition for the considered subschemes $Z \subset X$ to have a
coordinate ring with finite multiplicities is quite strong; for example, it yields a proper morphism 
to a punctual Hilbert scheme, that associates with $Z$ the categorical quotient $Z/\!/G \subset X/\!/G$. 

In the present article, we expose the construction and fundamental properties of the invariant Hilbert scheme, 
and survey some of its applications to varieties with algebraic group actions. The prerequisites are (hopefully)
quite modest: basic notions of algebraic geometry; the definitions and results that we need about actions and 
representations of algebraic groups are reviewed at the beginning of Section 2. Then we introduce flat families of 
closed $G$-stable subschemes of a fixed affine $G$-scheme $X$, where $G$ is a fixed reductive group, and define
the Hilbert function which encodes the $G$-module structure of coordinate rings. Given such a function $h$,
our main result asserts the existence of a universal family with base a quasi-projective scheme: the invariant 
Hilbert scheme $\Hilb^G_h(X)$.

Section 3 presents a number of basic properties of invariant Hilbert schemes. We first reduce the construction 
of $\Hilb^G_h(X)$ to the case (treated by Haiman and Sturmfels) that $G$ is diagonalizable and $X$ is a $G$-module. 
For this, we slightly generalize the approach of \cite{AB05}, where $G$ was assumed to be connected. Then we 
describe the Zariski tangent space at any closed point $Z$, with special attention to the case that $Z$ is a 
$G$-orbit closure in view of its many applications. Here also, we adapt the results of \cite{AB05}. More original 
are the next developments on the action of the equivariant automorphism group and on a generalization of the 
Hilbert-Chow morphism, which aim at filling some gaps in the literature. 

In Section 4, we first give a brief overview of invariant Hilbert schemes for finite groups and their 
applications to resolving quotient singularities; here the reader may consult \cite{Be08} for a detailed 
exposition. Then we survey very different applications of invariant Hilbert schemes, namely, to the 
classification of \emph{spherical varieties}. These form a remarkable class of varieties equipped with 
an action of a connected reductive group $G$, that includes toric varieties, flag varieties and symmetric 
homogeneous spaces. A normal affine $G$-variety $Z$ is spherical if and only if the $G$-module $\cO(Z)$ 
is multiplicity-free; then $Z$ admits an equivariant degeneration to an affine spherical variety 
$Z_0$ with a simpler structure, e.g., the decomposition of $\cO(Z_0)$ into simple $G$-modules is a grading 
of that algebra. Thus, $Z_0$ is uniquely determined by the highest weights of these simple modules, 
which form a finitely generated monoid $\Gamma$. We show (after \cite{AB05}) that the affine spherical 
$G$-varieties with weight monoid $\Gamma$ are parametrized by an affine scheme of finite type $\M_{\Gamma}$, 
a locally closed subscheme of some invariant Hilbert scheme.

Each subsection ends with examples which illustrate its main notions and results; some of these examples 
have independent interest and are developed along the whole text. In particular, we present results of 
Jansou (see \cite{Ja07}) that completely describe invariant Hilbert schemes associated with the 
``simplest'' data: $G$ is semi-simple, $X$ is a simple $G$-module, and the Hilbert function $h$ is that 
of the cone of highest weight vectors (the affine cone over the closed $G$-orbit $Y$ in the projective space 
$\bP(X)$). Quite remarkably, $\Hilb^G_h(X)$ turns out to be either a (reduced) point or an affine line 
$\bA^1$; moreover, the latter case occurs precisely when $Y$ can be embedded into another projective 
variety $\tY$ as an ample divisor, where $\tY$ is homogeneous under a semi-simple group $\tG \supset G$, 
and $G$ acts transitively on the complement $\tY \setminus Y$. Then the universal family is just the 
affine cone over $\tY$ embedded via the sections of $Y$. 

This relation between invariant Hilbert schemes and projective geometry has been further developed in 
recent works on the classification of arbitrary spherical $G$-varieties. In loose words, one reduces 
to classifying \emph{wonderful $G$-varieties} which are smooth projective $G$-varieties having the 
simplest orbit structure; examples include the just considered $G$-varieties $\tY$. Taking an appropriate 
affine multi-cone, one associates to each wonderful variety a family of affine spherical varieties over 
an affine space $\bA^r$, which turns out to be the universal family. This approach is presented in more 
details in the final Subsections \ref{subsec:fpsv} and \ref{subsec:cwv}.

Throughout this text, the emphasis is on geometric methods and very little space is devoted to 
the combinatorial and Lie-theoretical aspects of the domain, which are however quite important. 
The reader may consult \cite{Bi10, Ch08, MT02, Sa05, St96} for the combinatorics of toric Hilbert 
schemes, \cite{Pe10} for the classification of spherical embeddings, \cite{Bra10} for that of 
wonderful varieties, and \cite{Lo10} for uniqueness properties of spherical varieties. Also, we do 
not present the close relations between certain invariant Hilbert schemes and moduli of polarized 
projective schemes with algebraic group action; see \cite{Al02} for the toric case (and, more generally, 
semiabelic varieties), and \cite{AB06} for the spherical case. These relations would deserve further
investigation.

Also, it would be interesting to obtain a modular interpretation of the \emph{main component}
of certain invariant Hilbert schemes, that contains the irreducible varieties. For toric Hilbert 
schemes, such an interpretation is due to Olsson (see \cite{Ol08}) in terms of logarithmic structures.

Finally, another interesting line of investigation concerns the moduli scheme $\M_{\Gamma}$ 
of affine spherical varieties with weight monoid $\Gamma$. In all known examples, the irreducible 
components of $\M_{\Gamma}$ are just affine spaces, and it is tempting to conjecture that this always 
holds. A positive answer would yield insight into the classification of spherical varieties and 
the multiplication in their coordinate rings.

\medskip

\noindent
{\bf Acknowledgements.} I thank St\'ephanie Cupit-Foutou, Ronan Terpereau, and the referee for their 
careful reading and valuable comments.

\section{Families of affine schemes with reductive group action}
\label{sec:fas}

\subsection{Algebraic group actions}
\label{subsec:aga}

In this subsection, we briefly review some basic notions about algebraic groups and their actions;
details and proofs of the stated results may be found e.g. in the notes \cite{Bri10}. 
We begin by fixing notation and conventions.

Throughout this article, we consider algebraic varieties, algebraic groups, and 
schemes over a fixed algebraically closed field $k$ of characteristic zero. 
Unless explicitly mentioned, schemes are assumed to be \emph{noetherian}.

A {\bf variety} is a reduced separated scheme of finite type; thus,
varieties need not be irreducible. By a point of such a variety $X$, 
we mean a closed point, or equivalently a $k$-rational point.

An {\bf algebraic group} is a variety $G$ equipped with morphisms
$\mu_G : G \times G \to G$ (the multiplication), $\iota_G : G \to G$ 
(the inverse) and with a point $e_G$ (the neutral element)
that satisfy the axioms of a group; this translates into the commutativity
of certain diagrams.

Examples of algebraic groups include closed subgroups of the general linear group $\GL_n$
consisting of all $n \times n$ invertible matrices, where $n$ is a positive 
integer; such algebraic groups are called {\bf linear}. 
We will only consider linear algebraic groups in the sequel.

An (algebraic) {\bf action} of an algebraic group $G$ on a scheme $X$ 
is a morphism 
$$
\alpha : G \times X \longrightarrow X, \quad 
(g,x) \longmapsto g \cdot x
$$
such that the composite morphism
$$
\CD
X @>{e_G \times  \id_X}>> G \times X @>{\alpha}>> X
\endCD
$$
is the identity (i.e., $e_G$ acts on $X$ via the identity $\id_X$), 
and the square
\begin{equation}\label{eqn:asso}
\CD
G \times G \times X @>{\id_G \times \alpha}>> G \times X \\
@V{\mu_G \times \id_X}VV @V{\alpha}VV \\
G \times X @>{\alpha}>> X \\
\endCD
\end{equation}
commutes (the associativity property of an action). 

A scheme equipped with a $G$-action is called a $G$-{\bf scheme}.
Given two $G$-schemes $X$, $Y$ with action morphisms $\alpha$, $\beta$,
a morphism $f : X \to Y$ is called {\bf equivariant} 
(or a {\bf $G$-morphism}) if the square
$$
\CD
G \times X @>{\alpha}>> X\\
@V{\id_G \times f}VV @V{f}VV \\
G \times Y @>{\beta}>> Y\\
\endCD
$$
commutes. If $\beta$ is the trivial action, i.e., the projection $G \times Y \to Y$, 
then we say that $f$ is $G$-{\bf invariant}.

An (algebraic, or rational) $G$-{\bf module} is a vector space $V$ over $k$, 
equipped with a linear action of $G$ which is algebraic in the following sense: 
every $v \in V$ is contained in a $G$-stable finite-dimensional subspace $V_v \subset V$
on which $G$ acts algebraically. (We do not assume that $V$ itself is 
finite-dimensional). Equivalently, $G$ acts on $V$ via a representation
$\rho : G \to \GL(V)$ which is a union of finite-dimensional algebraic 
subrepresentations.

A $G$-stable subspace $W$ of a $G$-module $V$ is called a 
$G$-{\bf submodule}; $V$ is {\bf simple} if it has no non-zero proper
submodule. Note that simple modules are finite-dimensional, and 
correspond to irreducible representations.

A $G$-{\bf algebra} is an algebra $A$ over $k$, equipped with an action 
of $G$ by algebra automorphisms which also makes it a $G$-module.

Given a $G$-scheme $X$, the algebra $\cO(X)$ of global sections of 
the structure sheaf is equipped with a linear action of $G$ via
$$
(g \cdot f)(x) = f(g^{-1} \cdot x).
$$
In fact, this action is algebraic and hence \emph{$\cO(X)$ is a $G$-algebra}.
The assignement $X \mapsto \cO(X)$ defines a bijective correspondence 
between \emph{affine} $G$-schemes and $G$-algebras. 

Each $G$-algebra of finite type $A$ is generated by a finite-dimensional $G$-submodule $V$.
Hence $A$ is a quotient of the symmetric algebra $\Sym(V) \cong \cO(V^*)$ by a $G$-stable ideal. 
It follows that \emph{every affine $G$-scheme of finite type is isomorphic to a closed 
$G$-subscheme of a finite-dimensional $G$-module}.

\begin{examples}\label{ex:aga}
(i) Let $G := \GL_1$ be the {\bf multiplicative group}, denoted by $\bG_m$. Then 
$\cO(G) \cong k[t,t^{-1}]$ and the $G$-modules are exactly the graded vector spaces
\begin{equation}\label{eqn:grad}
V = \bigoplus_{n \in \bZ} V_n
\end{equation}
where $\bG_m$ acts via $t \cdot \sum_n v_n = \sum_n t^n v_n$. In particular,
the $G$-algebras are just the $\bZ$-graded algebras.

\noindent
(ii) More generally, consider a {\bf diagonalizable group} $G$, i.e., a closed 
subgroup of a {\bf torus} $(\bG_m)^n$. Then $G$ is uniquely determined
by its {\bf character group} $\cX(G)$, the set of homomorphisms
$\chi : G \to \bG_m$ equipped with pointwise multiplication. Moreover, 
the abelian group $\cX(G)$ is finitely generated, and the assignement
$G \mapsto \cX(G)$ defines a bijective correspondence between
diagonalizable groups and finitely generated abelian groups. This correspondence
is contravariant, and $G$-modules correspond to $\cX(G)$-graded vector spaces.

Any character of $\bG_m$ is of the form $t \mapsto t^n$ for some integer $n$; 
this identifies $\cX(\bG_m)$ with $\bZ$.
\end{examples}

\subsection{Reductive groups}
\label{subsec:rg} 

In this subsection, we present some basic results on reductive groups, their representations
and invariants; again, we refer to the notes \cite{Bri10} for details and proofs.

A linear algebraic group $G$ is called {\bf reductive} if every $G$-module is semi-simple, 
i.e., isomorphic to a direct sum of simple $G$-modules. In view of the characteristic-$0$ 
assumption, this is equivalent to the condition that $G$ has no non-trivial closed normal 
subgroup isomorphic to the additive group of a finite-dimensional vector space; 
this is the group-theoretical notion of reductivity.
 
Examples of reductive groups include finite groups, diagonalizable groups, 
and the classical groups such as $\GL_n$ and the special linear group
$\SL_n$ (consisting of $n \times n$ matrices of determinant $1$).

Given a reductive group $G$, we denote by $\Irr(G)$ the set of isomorphism 
classes of simple $G$-modules (or of irreducible $G$-representations). 
The class of the trivial $G$-module $k$ is denoted by $0$. 

For any $G$-module $V$, the map 
$$
\bigoplus_{M \in \Irr(G)} \Hom^G(M,V) \otimes_k M \longrightarrow V,
\quad \sum_M f_M \otimes x_M \longmapsto \sum_M f_M(x_M)
$$
is an isomorphism of $G$-modules, where $\Hom^G(M,V)$ denotes the vector space of 
morphisms of $G$-modules from $M$ to $V$, and $G$ acts on the left-hand side 
via
$$
g \cdot \sum_M f_M \otimes x_M = \sum_M f_M \otimes g \cdot x_M.
$$
Thus, the dimension of $\Hom^G(M,V)$ is the multiplicity of $M$ in $V$ 
(which may be infinite). This yields the {\bf isotypical decomposition}
\begin{equation}\label{eqn:decm}
V \cong  \bigoplus_{M \in \Irr(G)} V_M \otimes_k M \quad \text{where} \quad
V_M := \Hom^G(M,V).
\end{equation}
In particular, $V_0$ is the subspace of $G$-invariants in $V$, denoted by $V^G$.

For a $G$-algebra $A$, the invariant subspace $A^G$ is a subalgebra. Moreover, 
each $A_M$ is an $A^G$-module called the {\bf module of covariants of type $M$}.
Denoting by $X = \Spec(A)$ the associated $G$-scheme, we also have 
$$
A_M \cong \Mor^G(X,M^*)
$$  
(the set of $G$-morphisms from $X$ to the dual module $M^*$; note that $M^*$ is simple).
Also, we have an isomorphism of $A^G$-$G$-modules in an obvious sense
\begin{equation}\label{eqn:cova}
A \cong \bigoplus_{M \in \Irr(G)} A_M \otimes_k M.
\end{equation}

\begin{example}\label{ex:ag}
Let $G$ be a diagonalizable group with character group $\Lambda$. Then $G$ is reductive
and its simple modules are exactly the lines $k$ where $G$ acts via a character 
$\lambda \in \Lambda$. The decompositions (\ref{eqn:decm}) and (\ref{eqn:cova}) give back
the $\Lambda$-gradings of $G$-modules and $G$-algebras. 
\end{example}

Returning to an arbitrary reductive group $G$, the modules of covariants satisfy 
an important finiteness property:

\begin{lemma}\label{lem:finG}
Let $A$ be a $G$-algebra, finitely generated over a subalgebra 
$R \subset A^G$. Then the algebra $A^G$ is also finitely generated
over $R$. Moreover, the $A^G$-module $A_M$ is finitely generated for
any $M \in \Irr(G)$.
\end{lemma}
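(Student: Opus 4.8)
The plan is to reduce to a polynomial ring over $R$ and then run Hilbert's classical argument based on the Reynolds operator.

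\smallskip

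\noindent\textbf{Step 1: presentation as a quotient of a polynomial ring.} Since $A$ is finitely generated over $R$, a finite generating set spans, together with its $G$-translates, a finite-dimensional $G$-submodule $V \subset A$; hence the multiplication map $B := R \otimes_k \Sym(V) \to A$ is a surjective, $G$-equivariant homomorphism of $R$-algebras, with $B$ a polynomial ring over $R$. Let $\mathfrak{a}$ be its kernel, a $G$-stable ideal. Because $G$ is reductive, the functors $(-)^G$ and $\Hom^G(M,-)$ are exact (every $G$-module is semisimple), so applying them to $0 \to \mathfrak{a} \to B \to A \to 0$ produces a surjection of $R$-algebras $B^G \twoheadrightarrow A^G$ and, compatibly with it, a surjection of modules $B_M \twoheadrightarrow A_M$. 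Thus it suffices to prove both assertions for $B$ in place of $A$.

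\smallskip

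\noindent\textbf{Step 2: finite generation of the invariant ring.} Since $G$ acts trivially on $R$ and each graded piece $\Sym^d(V)$ is a finite-dimensional $G$-module whose non-trivial isotypical components carry no invariants, one has $B^G = R \otimes_k \Sym(V)^G$. So the claim reduces to the classical statement that $\Sym(V)^G = k[V^*]^G$ is finitely generated over $k$. Here I would invoke the Reynolds operator $\mathfrak{R}$ — the $G$-equivariant, $\Sym(V)^G$-linear, degree-preserving projection onto the invariants: for any ideal $I$ of $\Sym(V)^G$ one gets $I\,\Sym(V) \cap \Sym(V)^G = I$ by applying $\mathfrak{R}$ to an element written with coefficients in $I$; since $\Sym(V)$ is noetherian, ascending chains in $\Sym(V)^G$ pull back from ascending chains of extended ideals, so $\Sym(V)^G$ is noetherian, and then a finite homogeneous generating set of its irrelevant ideal generates $\Sym(V)^G$ as a $k$-algebra by induction on degree. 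Extending scalars to $R$ shows $B^G$, hence $A^G$, is finitely generated over $R$.

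\smallskip

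\noindent\textbf{Step 3: finite generation of the covariant modules.} Fix $M \in \Irr(G)$ and form the polynomial ring $C := B \otimes_k \Sym(M^*) = R \otimes_k \Sym(V \oplus M^*)$, graded by the $\Sym(M^*)$-degree, so $C_0 = B$ and $C_1 = B \otimes_k M^*$. Taking invariants gives a graded algebra with $(C^G)_0 = B^G$ and $(C^G)_1 = (B \otimes_k M^*)^G = \Hom^G(M,B) = B_M$. By Step 2 applied to $C$, the ring $C^G$ is finitely generated over $R$, hence over $B^G$; choose homogeneous $B^G$-algebra generators, discarding those of degree $0$ (which lie in $B^G$). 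Since a product of positive-degree generators cannot land in degree $1$, the finitely many generators of degree $1$ span $(C^G)_1 = B_M$ over $B^G$. Pulling back along $B_M \twoheadrightarrow A_M$ and $B^G \twoheadrightarrow A^G$ from Step 1 then shows $A_M$ is finitely generated over $A^G$, completing the proof.

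\smallskip

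\noindent The only genuinely non-formal ingredient — and the place to be careful — is the existence of the Reynolds operator, equivalently the exactness of $(-)^G$, which is exactly where reductivity of $G$ (linear reductivity, available since $\mathrm{char}\,k = 0$) is used; everything else is bookkeeping with gradings. A mild subtlety worth flagging is that $B$ need not be a finite module over $B^G$, so one cannot apply the Hilbert basis theorem to $B^G$ directly and must route the noetherian property through $\mathfrak{R}$ as above.
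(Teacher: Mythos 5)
Your proof is correct and follows essentially the same route the paper intends: the paper gives no independent argument for this lemma, instead noting that the case $R=k$ is the classical Hilbert--Nagata theorem (citing \cite{Bri10}) and that the proof adapts to a general base $R\subset A^G$, which is exactly what you carry out. Your Step 2 (Reynolds operator giving $I\,\Sym(V)\cap\Sym(V)^G=I$, hence noetherianity and finite generation of the invariant ring) and Step 3 (the auxiliary polynomial ring $B\otimes_k\Sym(M^*)$, graded so that the degree-one invariants are the covariants $B_M$) are precisely the standard Hilbert--Nagata argument being referenced, with the reduction to $B=R\otimes_k\Sym(V)$ handled correctly via exactness of $(-)^G$ and $\Hom^G(M,-)$.
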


In the case that $R = k$, this statement is a version of the classical 
Hilbert-Nagata theorem, see e.g. \cite[Theorem 1.25 and Lemma 2.1]{Bri10}.
The proof given there adapts readily to our setting, which will be generalized 
to the setting of families in Subsection \ref{subsec:fam}. 

In particular, for an affine $G$-scheme of finite type $X = \Spec(A)$,
the subalgebra $A^G$ is finitely generated and hence corresponds to
an affine scheme of finite type $X/\!/G$, equipped with a $G$-invariant morphism
$$
\pi : X \longrightarrow X/\!/G.
$$
In fact, $\pi$ is universal among all invariant morphisms with source $X$;
thus, $\pi$ is called the {\bf categorical quotient}. Also, given a closed $G$-subscheme
$Y \subset X$, the restriction $\pi_{\vert Y}$ is the categorical quotient of $Y$, and 
the image of $\pi_{\vert Y}$ is closed in $X/\!/G$. As a consequence, $\pi$ is surjective
and each fiber contains a unique closed $G$-orbit.

We now assume $G$ to be \emph{connected}.
Then the simple $G$-modules may be explicitly described via highest weight theory
that we briefly review. Choose a {\bf Borel subgroup} $B \subset G$, i.e., 
a maximal connected solvable subgroup. Every simple $G$-module $V$ contains 
a unique line of eigenvectors of $B$, say $k v$; then $v$ is called a {\bf highest weight vector}. 
The corresponding character $\lambda : B \to \bG_m$, such that $b \cdot v  = \lambda(b) v$
for all $b \in B$, is called the {\bf highest weight of $V$}; it uniquely determines 
the $G$-module $V$ up to isomorphism. We thus write $V = V(\lambda)$, $v = v_{\lambda}$, 
and identify $\Irr(G)$ with a subset of the character group $\Lambda := \cX(B)$: 
the set $\Lambda^+$ of {\bf dominant weights}, which turns out to be a finitely generated 
submonoid of the {\bf weight lattice} $\Lambda$. Moreover, $V(0)$ is the trivial $G$-module $k$.

In this setting, the modules of covariants admit an alternative description in terms of 
highest weights. To state it, denote by $U$ the unipotent part of $B$; this is a closed 
connected normal subgroup of $B$, and a maximal unipotent subgroup of $G$. 
Moreover, $B$ is the semi-direct product $UT$ where $T \subset B$ is a maximal torus, 
and $\Lambda$ is identified with the character group $\cX(T)$ via the restriction. 
Given a $G$-module $V$, the subspace of $U$-invariants $V^U$ is a $T$-module and hence 
a direct sum of eigenspaces $V^U_{\lambda}$ where $\lambda \in \Lambda$. Moreover, the map
$$
\Hom^G\big( V({\lambda}), V) \longrightarrow V^U_{\lambda}, \quad 
f \longmapsto f(v_\lambda)
$$
is an isomorphism for any $\lambda \in \Lambda^+$, and $V^U_{\lambda} = 0$
for any $\lambda \notin \Lambda^+$. As a consequence, given a $G$-algebra $A$,
we have isomorphisms 
$$
A_{V(\lambda)} \cong A^U_{\lambda}
$$ 
of modules of covariants over 
$$
A^G = A^B = A^U_0.
$$

The algebra of $U$-invariants also satisfies a useful finiteness property
(see e.g. \cite[Theorem 2.7]{Bri10}):

\begin{lemma}\label{lem:finU}
With the notation and assumptions of Lemma \ref{lem:finG}, the algebra
$A^U$ is finitely generated over $R$.
\end{lemma}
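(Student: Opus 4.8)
The plan is to reduce the statement about $A^U$ to the finite generation result for modules of covariants (Lemma~\ref{lem:finG}), exploiting the structure of $U$ as a maximal unipotent subgroup. First I would recall the ``multiplication of covariants'' picture: since $A^U = \bigoplus_{\lambda \in \Lambda^+} A^U_\lambda$ and each graded piece $A^U_\lambda \cong A_{V(\lambda)}$ as an $A^G$-module, the algebra $A^U$ is a $\Lambda^+$-graded $A^G$-algebra. By Lemma~\ref{lem:finG} the algebra $A^G$ is finitely generated over $R$, and each individual $A^U_\lambda$ is a finitely generated $A^G$-module. So the one thing that is \emph{not} immediate is that only finitely many ``generating degrees'' $\lambda$ are needed; equivalently, that the monoid $\Lambda^+$ together with the multiplication map $A^U_\lambda \otimes_{A^G} A^U_\mu \to A^U_{\lambda+\mu}$ is ``finitely generated'' as an algebra.

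The cleanest route is the classical trick of embedding $A^U$ into a larger $G$-algebra where finite generation of the \emph{full} ring of invariants can be applied. Consider the $G$-algebra $\cO(G/U)$; as a $G$-module it decomposes as $\bigoplus_{\lambda \in \Lambda^+} V(\lambda)^*$, and it is finitely generated over $k$ because $G/U$ is a quasi-affine variety of finite type whose affine closure $\overline{G/U}$ has coordinate ring $\cO(G/U)$ (finite generation here is itself a consequence of Lemma~\ref{lem:finG}, applied to $A = \cO(G)$ with the right $U$-action; alternatively one cites it as a standard fact). Now form the $G$-algebra $A \otimes_k \cO(G/U)$, which is finitely generated over $R$ since both factors are, and take $G$-invariants. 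Using the isotypical decomposition~(\ref{eqn:cova}) of $A$ and the decomposition of $\cO(G/U)$ above, together with $\Hom^G(V(\lambda), V(\mu)^*{}^*) = \Hom^G(V(\lambda),V(\mu))$ which is $k$ if $\lambda = \mu$ and $0$ otherwise, one computes
$$
(A \otimes_k \cO(G/U))^G \;\cong\; \bigoplus_{\lambda \in \Lambda^+} A_{V(\lambda)} \;\cong\; A^U
$$
as $R$-algebras, where the algebra structure on the left matches the multiplication of covariants on the right (this is where one checks that the Cartan-type multiplication $V(\lambda)^* \otimes V(\mu)^* \to V(\lambda+\mu)^*$ in $\cO(G/U)$ induces exactly the multiplication on $A^U$). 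By Lemma~\ref{lem:finG} applied to the $G$-algebra $A \otimes_k \cO(G/U)$, which is finitely generated over $R \subset (A \otimes_k \cO(G/U))^G$, its ring of invariants is finitely generated over $R$; hence so is $A^U$.

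The main obstacle I anticipate is bookkeeping rather than conceptual: verifying carefully that the $R$-algebra isomorphism $(A \otimes_k \cO(G/U))^G \cong A^U$ is compatible with the ring structures, i.e.\ that the product of covariants of types $V(\lambda)$ and $V(\mu)$ landing in the $V(\lambda+\mu)$-component on the $A^U$ side corresponds to the component-wise product in $A \otimes_k \cO(G/U)$ followed by projection to the $G$-invariant part. This requires knowing that $\cO(G/U)$ is, as a $G$-algebra, the ``Cartan algebra'' $\bigoplus_\lambda V(\lambda)^*$ with its canonical product (the highest weight vectors multiply without cancellation), a fact from highest weight theory. One should also confirm the noetherian hypotheses so that Lemma~\ref{lem:finG} applies to the tensor product; since $R$ is noetherian and the tensor product is finitely generated over $R$, this is automatic. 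A reader who prefers to avoid $\cO(G/U)$ can instead argue directly: pick finitely many homogeneous generators of the $A^G$-modules $A^U_{\lambda_1}, \dots, A^U_{\lambda_s}$ for a finite generating set $\lambda_1, \dots, \lambda_s$ of the monoid $\Lambda^+$, together with $R$-algebra generators of $A^G$, and show these generate $A^U$ over $R$ by induction on the total degree in $\Lambda^+$; but the decomposition argument above is shorter and is the one I would present.
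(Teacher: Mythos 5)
The paper offers no proof of this lemma, only a citation to \cite[Theorem 2.7]{Bri10}; your argument is precisely the standard transfer-principle proof behind that citation, namely $A^U \cong \big( A \otimes_k \cO(G/U) \big)^G$ together with finite generation of $\cO(G/U)$ and an application of Lemma \ref{lem:finG} to the $G$-algebra $A \otimes_k \cO(G/U)$, which is finitely generated over $R \subset A^G \subset \big( A \otimes_k \cO(G/U) \big)^G$. So the approach is the expected one, and the main line of reasoning is correct. One parenthetical does need correcting: finite generation of $\cO(G/U) = \cO(G)^U$ is \emph{not} a consequence of Lemma \ref{lem:finG} ``applied to $A = \cO(G)$ with the right $U$-action''; that lemma concerns $G$-invariants and covariants $A_M$ for the reductive group $G$ and says nothing about invariants of the non-reductive group $U$. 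In fact, finite generation of $\cO(G)^U$ is exactly the special case $A = \cO(G)$, $R = k$ of the statement you are proving, so invoking Lemma \ref{lem:finG} here would be circular. It must be supplied independently, either by citing the Hadziev--Grosshans theorem (your fallback, which is legitimate), or by the short direct argument you sketch at the end: $\cO(G/U) \cong \bigoplus_{\lambda \in \Lambda^+} V(\lambda)^*$ with multiplication given on graded pieces by the Cartan product, and since each graded piece is a simple $G$-module and $\cO(G)$ is a domain ($G$ being connected here), one gets $V(\lambda)^* \cdot V(\mu)^* = V(\lambda+\mu)^*$, so the finitely many pieces $V(\lambda_i)^*$, with $\lambda_i$ generating the finitely generated monoid $\Lambda^+$, generate the algebra. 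Finally, the multiplicative compatibility you flag as the main bookkeeping obstacle is most cleanly handled geometrically rather than componentwise: the isomorphism $\big( A \otimes_k \cO(G) \big)^G \cong A$ coming from the change of variables $(x,g) \mapsto (g^{-1}\cdot x, g)$ on $X \times G$ is an isomorphism of algebras, equivariant for the commuting right $G$-action on $\cO(G)$, and taking $U$-invariants for that action yields the algebra isomorphism $\big( A \otimes_k \cO(G/U) \big)^G \cong A^U$ with no case-by-case check of Cartan components.
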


This in turn yields a categorical quotient
$$
\psi : X \longrightarrow X/\!/U
$$
for an affine $G$-scheme of finite type $X$, where $X/\!/U := \Spec\, \cO(X)^U$
is an affine $T$-scheme of finite type. Moreover, for any closed $G$-subscheme 
$Y \subset X$, the restriction $\psi_{\vert Y}$ is again the categorical quotient of $Y$
(but $\psi$ is generally not surjective).

Also, many properties of $X$ may be read off $X/\!/U$. For example, 
\emph{an affine $G$-scheme $X$ is of finite type (resp. reduced, irreducible, normal) 
if and only if so is $X/\!/U$} (see \cite[Chapter 18]{Gr97}).

\begin{examples}\label{ex:agbis}
(i) If $G$ is a torus, then $G = B = T$ and $\Lambda^+ = \Lambda$; each $V(\lambda)$
is just the line $k$ where $T$ acts via $t \cdot z = \lambda(t) z$.

\medskip

\noindent
(ii) Let $G = \SL_2$. We may take for $B$ the subgroup of upper triangular matrices with
determinant $1$. Then $U$ is the subgroup of upper triangular matrices with diagonal entries 
$1$, isomorphic to the additive group $\bG_a$. Moreover, we may take for $T$ the subgroup 
of diagonal matrices with determinant $1$, isomorphic to the multiplicative group $\bG_m$. 
Thus, $\Lambda \cong \bZ$. 

In fact, the simple $G$-modules are exactly the spaces $V(n)$ of homogeneous polynomials 
of degree $n$ in two variables $x,y$ where $G$ acts by linear change of variables;
here $n \in \bN \cong \Lambda^+$. A highest weight vector in $V(n)$ is the monomial $y^n$.
Moreover, $V(n)$ is isomorphic to its dual module, and hence to the $n$-th symmetric
power $\Sym^n(k^2)$ where $k^2 \cong V(1)^* \cong V(1)$ denotes the standard $G$-module.

Since $G$ acts transitively on $V(1) \setminus \{0\}$, the categorical quotient
$V(1)/\!/G$ is just a point. One easily shows that the quotient by $U$ is the map
$$
V(1) \longrightarrow \bA^1, \quad a x + b y \longmapsto a.
$$

Also, the categorical quotient of $V(2)$ by $G$ is given by the discriminant
$$
\Delta : V(2) \longrightarrow \bA^1, \quad a x^2 + 2 b xy + c y^2 \longmapsto ac - b^2
$$
and the categorical quotient by $U$ is the map
$$
V(2) \longrightarrow \bA^2, \quad a x^2 + 2 b xy + c y^2 \longmapsto (a,ac - b^2).
$$

For large $n$, no explicit description of the categorical quotients $V(n)/\!/G$ and 
$V(n)/\!/U$ is known, although the corresponding algebras $\Sym\big( V(n) \big)^G$
and $\Sym\big( V(n) \big)^U$ (the invariants and covariants of binary forms of degree $n$)
have been extensively studied since the mid-19th century.

\end{examples}

\subsection{Families}
\label{subsec:fam}

In this subsection, we fix a reductive group $G$ and an affine $G$-scheme of finite type, 
$X = \Spec(A)$. We now introduce our main objects of interest. 

\begin{definition}\label{def:fam}
A {\bf family of closed $G$-subschemes of $X$ over a scheme $S$}
is a closed subscheme $\cZ \subset X \times S$, stable by the 
action of $G$ on $X \times S$ via $g \cdot (x,s) = (g \cdot x, s)$. 

The family $\cZ$ is {\bf flat}, if so is the morphism $p : \cZ \to S$
induced by the projection $p_2 : X \times S \to S$.
\end{definition}

Given a family $\cZ$ as above, the morphism $p$ is $G$-invariant; thus, for any $k$-rational 
point $s$ of $S$, the scheme-theoretic fiber $\cZ_s$ of $p$ at $s$ is a closed 
$G$-subscheme of $X$. More generally, an arbitrary point $s \in S$ yields a closed subscheme 
$\cZ_{k(\bar{s})} \subset X_{k(\bar{s})}$ where $k(s)$ denotes the residue field of $s$, 
and $k(\bar{s})$ is an algebraic closure of that field. The scheme $\cZ_{k(\bar{s})}$ is 
the geometric fiber of $\cZ$ at $s$, also denoted by $\cZ_{\bar{s}}$; this is a closed
subscheme of $X_{\bar{s}}$ equipped with an action of $G$ (viewed as a constant group scheme)
or equivalently of $G_{\bar{s}}$ (the group of $k(\bar{s})$-rational points of~$G$).

Since $X$ is affine, the data of the family $\cZ$ is equivalent to 
that of the sheaf $p_* \cO_{\cZ}$ as a quotient of 
$(p_2)_* \cO_{X \times S} = A \otimes_k \cO_S$, where both are viewed as sheaves 
of $\cO_S$-$G$-algebras. Moreover, as $G$ acts trivially on $S$, we have 
a canonical decomposition
\begin{equation}\label{eqn:decs}
p_* \cO_{\cZ} \cong \bigoplus_{M \in \Irr(G)} \cF_M \otimes_k M,
\end{equation}
where each {\bf sheaf of covariants}
\begin{equation}\label{eqn:covs}
\cF_M := \Hom^G(M, p_*\cO_Z) = ( p_*\cO_Z \otimes_k M^*)^G 
\end{equation}
is a sheaf of $\cO_S$-modules, and (\ref{eqn:decs}) is an 
isomorphism of sheaves of $\cO_S$-$G$-modules.

Also, $p_* \cO_{\cZ}$ is a sheaf of finitely generated $\cO_S$-algebras,
since $X$ is of finite type. In view of Lemma \ref{lem:finG}, it follows that 
$$
\cF_0 = (p_* \cO_{\cZ})^G
$$ 
is a sheaf of finitely generated $\cO_S$-algebras as well; moreover, $\cF_M$ 
is a coherent sheaf of $\cF_0$-modules, for any $M \in \Irr(G)$. By (\ref{eqn:decs}), 
the family $\cZ$ is flat if and only if each sheaf of covariants $\cF_M$ is flat.

\begin{definition}\label{def:fm}
With the preceding notation, the family $\cZ$ is {\bf multiplicity-finite} 
if the sheaf of $\cO_S$-modules $\cF_M$ is coherent for any $M \in \Irr(G)$; 
equivalently, $\cF_0$ is coherent.

We say that $\cZ$ is {\bf multiplicity-free} if each $\cF_M$ is zero or invertible.
\end{definition}

Since flatness is equivalent to local freeness for a finitely generated 
module over a noetherian ring, we see that \emph{a multiplicity-finite
family is flat iff each sheaf of covariants is locally free of finite rank}.
When the base $S$ is connected, the ranks of these sheaves are well-defined 
and yield a numerical invariant of the family: the {\bf Hilbert function}
$$
h = h_{\cZ} : \Irr(G) \longrightarrow \bN, \quad M \longmapsto \rk_{\cO_S}(\cF_M).
$$ 

This motivates the following: 

\begin{definition}\label{def:hilb}
Given a function $h : \Irr(G) \to \bN$,
a {\bf flat family of closed subschemes of $X$ with Hilbert function $h$} is 
a closed subscheme $\cZ \subset X \times S$ such that each sheaf of covariants 
$\cF_M$ is locally free of rank $h(M)$.
\end{definition}

\begin{remarks}\label{rem:fam}
(i) In the case that $S = \Spec(k)$, a family is just a closed $G$-subscheme 
$Z \subset X$. Then $Z$ is multiplicity-finite
if and only if the quotient $Z/\!/G$ is finite; equivalently, $Z$ contains only
finitely many closed $G$-orbits. For example, \emph{any $G$-orbit closure is multiplicity-finite}.

Also, $Z$ is multiplicity-free if and only if the $G$-module $\cO(Z)$ has multiplicities 
$0$ or $1$. If $Z$ is an irreducible variety, this is equivalent to the condition that 
$Z$ contains an open orbit of a Borel subgroup $B \subset G$ (see e.g. \cite[Lemma 2.12]{Bri10}). 

In particular, when $G$ is a torus, say $T$, an affine irreducible $T$-variety $Z$ is multiplicity-free 
if and only if it contains an open $T$-orbit. Then each non-zero eigenspace $\cO(Z)_{\lambda}$ is a line, 
and the set of those $\lambda$ such that $\cO(Z)_{\lambda}\neq 0$ consists of those linear combinations
$n_1 \lambda_1 + \cdots n_r \lambda_r$, where $n_1,\ldots,n_r$ are non-negative integers,
and $\lambda_1, \ldots, \lambda_r$ are the weights of homogeneous generators of the algebra
$\cO(Z)$. Thus, this set is a finitely generated submonoid of $\Lambda$
that we denote by $\Lambda^+(Z)$ and call the {\bf weight monoid of $Z$}.

Each affine irreducible multiplicity-free $T$-variety $Z$ is uniquely determined by its weight monoid $\Gamma$: 
in fact, \emph{the $\Lambda$-graded algebra $\cO(Z)$ is isomorphic to $k[\Gamma]$}, the algebra of the monoid 
$\Gamma$ over $k$. Moreover, $Z$ is normal if and only if $\Gamma$ is {\bf saturated}, i.e., equals the 
intersection of the group that it generates in $\Lambda$, with the convex cone that it generates in 
$\Lambda \otimes_{\bZ} \bR$. Under that assumption, $Z$ is called an (affine) {\bf toric variety}.

Returning to an affine irreducible $G$-variety $Z$, note that $Z$ is multiplicity-free if
and only if so is $Z/\!/U$. In that case, we have an isomorphism of $G$-modules
\begin{equation}\label{eqn:wm}
\cO(Z) \cong \bigoplus_{\lambda \in \Lambda^+(Z)} V(\lambda)
\end{equation}
where $\Lambda^+(Z) := \Lambda^+(Z/\!/U)$ is again called the {\bf weight monoid of $Z$}.
In other words, the Hilbert function of $Z$ is given by
\begin{equation}\label{eqn:1-0}
h(\lambda) = 
\begin{cases}
1 & \text{if $\lambda \in \Lambda^+(Z)$,} \\
0 & \text{otherwise.}\\
\end{cases}
\end{equation}

Also, \emph{$Z$ is normal if and only if $\Lambda^+(Z)$ is saturated}; then $Z$ is called
an (affine) {\bf spherical $G$-variety}. In contrast to the toric case, spherical varieties
are not uniquely determined by their weight monoid, see e.g. Example \ref{ex:fam}(ii).

\medskip

\noindent
(ii) A flat family $\cZ$ over a \emph{connected} scheme $S$ is multiplicity-finite
(resp. is multiplicity-free, or has a prescribed Hilbert function $h$) if and only if 
so does some geometric fiber $\cZ_{\bar{s}}$. In particular, if some fiber is a spherical
variety, then the family is multiplicity-free.

\medskip

\noindent
(iii) Any family of closed $G$-subschemes $\cZ \subset X \times S$ yields a family
of closed $T$-subschemes $\cZ/\!/U \subset X/\!/U \times S$; moreover, the sheaves of
covariants of $\cZ$ and $\cZ/\!/U$ are isomorphic. Thus, $\cZ$ is flat 
(multiplicity-finite, multiplicity-free) if and only if so is $\cZ/\!/U$. Also,
$\cZ$ has Hilbert function $h$ if and only if $\cZ/\!/U$ has Hilbert function $\bar{h}$
such that
\begin{equation}\label{eqn:bh}
\bar{h}(\lambda) = \begin{cases} 
h(\lambda) & \text{if $\lambda \in \Lambda^+$}, \\
0 & \text{otherwise}. \\
\end{cases}
\end{equation} 

\end{remarks} 

\begin{examples}\label{ex:fam}
(i) The surface $\cZ$ of equation $x y - z = 0$ in $\bA^3$ is stable under the action of 
$\bG_m$ via $t \cdot (x,y,z) = (tx, t^{-1}y, z)$. The morphism $z : \cZ \to \bA^1$ is a flat family
of closed $\bG_m$-subschemes of $\bA^2$ (the affine plane with coordinates $x,y$). The fibers
over non-zero points of $\bA^1$ are all isomorphic to $\bG_m$; they are exactly the orbits of
points of $\bA^2$ minus the coordinate axes. In particular, the family has Hilbert function
the constant function $1$. The fiber at $0$ is the (reduced) union of the coordinate axes.

For the $\bG_m$-action on $\bA^3$ via $t \cdot (x,y,z) = (t^2 x, t^{-1}y, z)$, the surface
$\cW$ of equation $x y^2 = z$ yields a family with the same fibers at non-zero points, but 
the fiber at $0$ is non-reduced.

More generally, consider a torus $T$ acting linearly on the affine space $V = \bA^N$
with pairwise distinct weights. Denote by $\lambda_1,\ldots,\lambda_N$ the opposites of these
weights, i.e., the weights of the coordinate functions. Also, let $v \in V$ be 
a {\bf general point} in the sense that all its coordinates are non-zero. Then the orbit closure
$$
Z := \overline{T \cdot v} \subset V
$$
is an irreducible multiplicity-free variety, and different choices of $v$ yield isomorphic 
$T$-varieties; moreover, all irreducible multiplicity-free varieties may be obtained in this way. 
The weight monoid of $Z$ is generated by $\lambda_1,\ldots,\lambda_N$.

We construct flat families over $\bA^1$ with general fiber $Z$ as follows. 
Let the torus $T \times \bG_m$ act linearly on $V \times \bA^1 = \bA^{N+1}$ 
such that the coordinate functions have weights
$$
(\lambda_1,a_1), \ldots, (\lambda_N,a_N), (0,1)
$$
where $a_1,\ldots,a_N$ are integers, viewed as characters of $\bG_m$. Then the orbit closure
$$
\cZ := \overline{(T \times \bG_m) \cdot (v,1)} \subset V \times \bA^1
$$
may be viewed as a $T$-variety. The projection $p: \cZ \to \bA^1$ is $T$-invariant,
and flat since $\cZ$ is an irreducible variety. Moreover, $p$ is trivial over 
$\bA^1 \setminus \{0\}$; specifically, the map
$$ 
p^{-1}(\bA^1 \setminus \{0\}) \longrightarrow Z \times (\bA^1 \setminus \{0\}),
\quad (v,s) \longmapsto (s^{-1}v, s)
$$
is a $T$-equivariant isomorphism of families over $\bA^1 \setminus \{0\}$.
In particular, the fibers of $p$ at non-zero points are all isomorphic to $Z$,
and $p$ is multiplicity-free with Hilbert function $h$ as above.
On the other hand, the special fiber $\cZ_0$ is non-empty if and only if
$p$ (viewed as a regular function on $\cZ$) is not invertible; this translates
into the condition that the convex cone generated by 
$(\lambda_1,a_1), \ldots, (\lambda_N,a_N)$ does not contain $(0,-1)$.

One may show that the preceding construction yields all one-parameter families with 
generic fiber a multiplicity-free variety. Also, one may show that the special fiber is reducible 
unless the whole family is trivial; this contrasts with our next example, where all fibers are 
irreducible varieties and the special fiber is singular while all others are smooth.

\medskip

\noindent
(ii) Let $G = \SL_2$ and $\Delta : V(2) \to \bA^1$ the discriminant as in Example
\ref{ex:agbis}(ii). Then the graph
$$
\cZ = \{(f,s) \in  V(2) \times \bA^1 ~\vert~ \Delta(f) = s \}
$$
is a flat family of $G$-stable closed subschemes of $V(2)$. The fibers at non-zero 
closed points are exactly the $G$-orbits of non-degenerate quadratic forms, while the 
fiber at $0$ consists of two orbits: the squares of non-zero linear forms, and the origin. 
Since $\cZ \cong V(2)$ as $G$-varieties, the Hilbert function of $\cZ$ is given by
$$
h_2(n) = 
\begin{cases} 
1 & \text{if $n$ is even}, \\
0 & \text{if $n$ is odd}.\\
\end{cases}
$$
Thus, $\cZ$ is multiplicity-free. Moreover, the family $\cZ/\!/U$ is trivial with fiber $\bA^1$,
as follows from the description of $V(2)/\!/U$ in Example \ref{ex:agbis}(ii). In particular, 
each fiber $\cZ_s$ is a spherical variety. 

Next, consider the quotient $\cW$ of $V(2)$ by the involution $\sigma : f \mapsto -f$.
Then $\cW$ is the affine $G$-variety associated with the subalgebra of 
$\cO\big(V(2)\big) \cong \Sym \big( V(2) \big)$ consisting of even polynomial functions,
i.e., the subalgebra generated by 
$\Sym^2 \big( V(2) \big) \cong V(4) \oplus V(0)$. In other words, 
$$
\cW \subset V(4) \times \bA^1 
$$
and the resulting projection $q : \cW \to \bA^1$ may be identified with the $\sigma$-invariant 
map $V(2) \to \bA^1$ given by the discriminant. It follows that $q$ is a flat family of 
$G$-stable closed subschemes of $V(4)$, with Hilbert function given by
$$
h_4(n) = 
\begin{cases} 
1 & \text{if $n$ is a multiple of $4$}, \\
0 & \text{otherwise}.\\
\end{cases}
$$
In particular, $\cW$ is multiplicity-free. Moreover, its fibers at non-zero closed points are 
exactly the orbits $G \cdot f^2 \subset V(4)$, where $f$ is a non-degenerate quadratic form,
while the fiber at $0$ consists of two orbits: the fourth powers of non-zero linear forms, 
and the origin. The family $\cW/\!/U$ is again trivial with fiber $\bA^1$, so that the fibers
of $\cW$ are spherical varieties. 

We will show that both families just constructed are universal (in the sense of the next
subsection), and that no family with similar properties exists in $V(n)$ for $n \neq 2,4$. 
For this, we will apply the various techniques that we successively introduce; 
see Examples \ref{ex:tuf}(ii), \ref{ex:zts}(ii), \ref{ex:aut}(ii) and \ref{ex:qsm}(ii). 
\end{examples}

\subsection{The universal family}
\label{subsec:tuf}

In the setting of the previous subsection, there is a natural construction of {\bf pull-back} 
for families of $G$-stable subschemes of $X$: given such a family $\cZ \subset X \times S$ 
and a morphism of schemes $f : S' \to S$, we can form the cartesian square
$$
\CD
\cZ' @>>> X \times S' \\
@VVV @V{\id_X \times f}VV \\
\cZ @>>> X \times S \\
\endCD
$$ 
where the horizontal arrows are inclusions. This yields a family of closed
$G$-subschemes of $X$ over $S'$: the pull-back of $\cZ$ under $f$, which may also be defined
via the cartesian square
$$
\CD
\cZ' @>>> S' \\
@VVV @V{f}VV \\
\cZ @>{p}>> S. \\
\endCD
$$ 
Note that $\cZ'$ is flat over $S'$ whenever $\cZ$ is flat over $S$; moreover, multiplicity-finiteness
and -freeness are preserved under pull-back, as well as the Hilbert function.

We may now state our main result, which asserts the existence of a universal family:

\begin{theorem}\label{thm:hilb}
Given a reductive group $G$, an affine $G$-scheme of finite type $X$ and a function 
$h: \Irr(G) \to \bN$, there exists a family of closed $G$-subschemes with Hilbert function $h$,
\begin{equation}\label{eqn:univ}
\Univ^G_h(X) \subset X \times \Hilb^G_h(X),
\end{equation}
such that any family $\cZ \subset X \times S$ of closed $G$-subschemes with Hilbert 
function $h$ is obtained from (\ref{eqn:univ}) by pull-back under 
a unique morphism $f : S \to \Hilb^G_h(X)$. Moreover, the scheme $\Hilb^G_h(X)$ is 
quasi-projective (in particular, of finite type).
\end{theorem}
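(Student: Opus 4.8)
The plan is to reduce the general case to the two cases that are already available in the literature: the multigraded Hilbert scheme of Haiman--Sturmfels, and then to lift from diagonalizable groups to arbitrary reductive groups via the unipotent-invariants construction. First I would handle the case where $G$ is \emph{diagonalizable}. Writing $X$ as a closed $G$-subscheme of a $G$-module $V$ (possible since $A=\cO(X)$ is finitely generated), the $G$-action equips $\cO(V)=\Sym(V^*)$ with a grading by the character group $\Lambda=\cX(G)$, and $G$-stable ideals are exactly the $\Lambda$-homogeneous ideals. A family of closed $G$-subschemes of $X$ with Hilbert function $h$ is then precisely a family of $\Lambda$-homogeneous ideals of $\cO(V)$ containing the defining ideal of $X$, such that the graded pieces of the quotient have the prescribed finite dimensions $h(\lambda)$. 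This is exactly the situation governed by the multigraded Hilbert scheme $\Hilb^h_\Lambda\big(\cO(V)\big)$ of \cite{HS04}, which is quasi-projective; the locus parametrizing those ideals that contain $I_X$ is closed in it (a single incidence condition), hence also quasi-projective, and carries the required universal family. This settles Theorem \ref{thm:hilb} for diagonalizable $G$.

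Next I would treat a general \emph{connected} reductive $G$. The key tool is Lemma \ref{lem:finU} together with the description $A_{V(\lambda)}\cong A^U_\lambda$ of modules of covariants: passing to $U$-invariants is an exact functor on $G$-modules here (characteristic zero, $U$ has a filtration by $\bG_a$'s but $\cO(G/U)$ is a nice $G$-module), and by Remark \ref{rem:fam}(iii) a family $\cZ\subset X\times S$ is flat, multiplicity-finite, multiplicity-free, or has Hilbert function $h$ if and only if the induced family $\cZ/\!/U\subset (X/\!/U)\times S$ has the corresponding property with $\bar h$ as in \eqref{eqn:bh}. Since $X/\!/U$ is an affine $T$-scheme of finite type, the diagonalizable case already constructs $\Hilb^T_{\bar h}(X/\!/U)$ with its universal family. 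The plan is then to show that $\cZ\mapsto \cZ/\!/U$ is a \emph{fully faithful} operation on families over any base $S$ — i.e. $\cZ$ is recovered from $\cZ/\!/U$ — and that its essential image is a locally closed (in fact closed) condition cut out inside $\Hilb^T_{\bar h}(X/\!/U)$. The recovery statement is classical over a point: a $G$-algebra with finite multiplicities is determined by its $U$-invariants as a $T$-algebra together with the $G$-module structure, because $A=\bigoplus_\lambda \Mor^G(X,V(\lambda)^*)\otimes V(\lambda)$ and $\Mor^G(X,V(\lambda)^*)\cong A^U_\lambda$. The work is to make this relative over $S$ and to identify which $T$-stable quotients of $\cO(X/\!/U)$ actually arise from $G$-stable quotients of $\cO(X)$: this is governed by finitely many multiplication maps $A^U_\lambda\otimes A^U_\mu\to \bigoplus_\nu A^U_\nu$ being compatible with the full $G$-algebra structure, i.e. by the vanishing of finitely many sections of coherent sheaves on $\Hilb^T_{\bar h}(X/\!/U)$, which defines a closed subscheme $H\subset\Hilb^T_{\bar h}(X/\!/U)$. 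One then sets $\Hilb^G_h(X):=H$ and builds $\Univ^G_h(X)$ by reconstituting the $G$-algebra sheaf $\bigoplus_M \cF_M\otimes_k M$ from the universal $U$-invariant family over $H$, checking it is flat with the right Hilbert function and that its formation commutes with base change, which yields the universal property.

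Finally, for a \emph{non-connected} reductive $G$ I would descend from its identity component $G^0$: a $G$-stable closed subscheme of $X$ is in particular $G^0$-stable, so families for $G$ form a subfunctor of families for $G^0$ cut out by the extra condition of being stable under the finite group $G/G^0$; this finite group acts on $\Hilb^{G^0}_h(X)$ and on its universal family, and the locus of fixed points (more precisely, the locus where the universal family is $G$-stable) is again a closed subscheme, quasi-projective, carrying the restricted universal family. Assembling these three reductions — diagonalizable via \cite{HS04}, connected reductive via $U$-invariants, non-connected via the component group — gives the theorem in general.

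I expect the main obstacle to be the second step: proving that the assignment $\cZ\mapsto\cZ/\!/U$ is representable by a \emph{closed immersion} of Hilbert functors, and in particular that it commutes with arbitrary (not necessarily flat) base change on $S$. Over a field the equivalence between multiplicity-finite $G$-algebras and their $T$-algebras of $U$-invariants is standard highest-weight theory, but in families one must check that $(p_*\cO_{\cZ})^U$ behaves well — that taking $U$-invariants commutes with tensoring by $\cO_S$, which uses that $\cO(G/U)$ decomposes as a $G$-module with each isotypic piece finite-dimensional and that $U$ is observable so the relevant higher cohomology vanishes — and that the reconstruction of $p_*\cO_{\cZ}$ from $\cF_0$-modules $\cF_M$ together with the algebra structure is functorial in $S$. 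This is precisely the point where \cite{AB05} worked (for $G$ connected), and where the excerpt says the present treatment ``slightly generalize[s]'' that approach; making the generalization clean — especially tracking the finite-generation of covariants from Lemma \ref{lem:finU} through the relative setting — is the technical heart of the proof.
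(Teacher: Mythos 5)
Your core reductions coincide with the paper's: for connected $G$ you pass to $U$-invariants and realize $Hilb^G_h(X)$ as a closed subfunctor of $Hilb^T_{\bar h}(X/\!/U)$, cut out by the compatibility of the multiplication on $U$-invariants with the full $G$-algebra structure — this is exactly Lemma \ref{lem:Uinv}, following \cite[Theorem 1.7]{AB05}, where the conditions are the vanishing of the morphisms $F_v$ attached to highest weight vectors $v \in \big(V(\lambda)\otimes_k V(\mu)\big)^U_\nu$ (these are not literally finitely many, but that is harmless: one simply intersects the zero loci); and your diagonalizable case (embed $X$ in a module, impose containment of $I_X$ as a closed condition, invoke \cite{HS04}) is the paper's Lemma \ref{lem:sub} combined with the multigraded Hilbert scheme. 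The genuine divergence is the non-connected case. The paper enlarges the group: it replaces $G$ by $\GL_n$ via the induced scheme $\tX = \tG\times^G X$ (Lemma \ref{lem:ind}), so that connectedness is in force before the $U$-invariant step, and $Hilb^G_h(X)$ becomes a union of connected components of $Hilb^{\tG}_{\th}(\tX)$. You instead descend to the identity component $G^0$ and take the fixed-point subscheme of the finite group $G/G^0$ acting on the $G^0$-Hilbert scheme — essentially the device the paper uses later for finite groups (Proposition \ref{prop:conn}). Your route works, with two points to make precise: the Hilbert function must be converted (a function $h$ on $\Irr(G)$ induces one on $\Irr(G^0)$ by restricting simple $G$-modules), and the fixed-point locus represents $G$-stable families with prescribed $G^0$-multiplicities, which is coarser than prescribing $h$; since the $G$-covariant sheaves of the restricted universal family are direct summands of locally free sheaves, their ranks are locally constant, so $Hilb^G_h(X)$ is the union of those connected components of the fixed locus where these ranks equal $h$ — the same ``union of connected components'' bookkeeping that appears in Lemma \ref{lem:ind}. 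The induced-scheme trick buys a uniform one-step reduction to a connected group; your component-group argument is arguably more elementary but requires this extra selection of components.
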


The family (\ref{eqn:univ}) is of course uniquely determined up to a unique isomorphism
by its universal property. The scheme $\Hilb^G_h(X)$ is called the {\bf invariant Hilbert scheme}
associated with the affine $G$-scheme $X$ and the function $h$.

Theorem \ref{thm:hilb} may be reformulated as asserting that
\emph{the {\bf Hilbert functor} $Hilb^G_h(X)$ that associates with any scheme $S$, 
the set of flat families $\cZ \subset X \times S$ with Hilbert function $h$, 
is represented by the quasi-projective scheme $\Hilb^G_h(X)$}. 

By taking $S = \Spec(R)$ where $R$ is an arbitrary algebra, this yields 
an algebraic description of the $R$-points of the invariant Hilbert scheme: 
these are those $G$-stable ideals $I \subset A \otimes_k R$ such that 
each $R$-module of covariants 
$$
\big( (R \otimes_k A)/I \big)_M = (R \otimes_k A)_M/I_M
$$ 
is locally free of rank $h(M)$.

In particular, the $k$-rational points of $\Hilb^G_h(X)$ (which are the same
as its closed points, since this scheme is of finite type) are those
$G$-stable ideals $I$ of $A = \cO(X)$ such that each simple $G$-module $M$ has 
multiplicity $h(M)$ in the quotient $A/I$. These points may also be identified
with the closed $G$-stable subschemes $Z \subset X$ with Hilbert function $h$.

\begin{remarks}\label{rem:tuf}
(i) The case of the {\bf trivial group $G$} is already quite substantial.
There, $X$ is just an affine scheme of finite type, and a family with finite 
multiplicities is exactly a closed subscheme $\cZ \subset X \times S$
such that the projection $p: \cZ \to S$ is finite. Moreover, a Hilbert function
is just a non-negative integer $n$. In that case, $\Hilb_n(X)$ is the
{\bf punctual Hilbert scheme} that parametrizes the closed subschemes of length $n$
of $X$. In fact, $\Hilb_n(X)$ exists more generally for any quasi-projective
scheme $X$ over a field of arbitrary characteristic.

\medskip

\noindent
(ii) If $G$ is the multiplicative group $\bG_m$, we know that $X$ corresponds
to a $\bZ$-graded algebra of finite type $A$. For a Hilbert function $h : \bZ \to \bN$,
the scheme $\Hilb_h^{\bG_m}(X)$ parametrizes those graded ideals $I \subset A$ 
such that the vector space $(A/I)_n$ has dimension $h(n)$ for all $n \in \bZ$. 

Of special interest is the case that $X$ is the affine space $\bA^N$ where $\bG_m$ 
acts via scalar multiplication, i.e., $A$ is a polynomial ring in $N$ indeterminates 
of weight $1$. Then a necessary condition for the existence of such ideals $I$ is that 
$h(n) = P(n)$ for all $n \gg 0$, where $P(t)$ is a (uniquely determined) polynomial:
the {\bf Hilbert polynomial} of the graded algebra $A/I$.
In that case, we also have the {\bf Hilbert scheme} $\Hilb_P(\bP^{N-1})$ that parametrizes 
closed subschemes of the projective $(N-1)$-space with Hilbert polynomial $P$, or equivalently, 
graded ideals $I \subset A$ such that $\dim(A/I)_n = P(n)$ for all $n \gg 0$. 
This yields a morphism 
$$
\Hilb^{\bG_m}_h(\bA^N) \longrightarrow \Hilb_P(\bP^{N-1})
$$ 
which is in fact an isomorphism for an appropriate choice of the Hilbert function $h$,
associated to a given Hilbert polynomial $P$ (see \cite[Lemma 4.1]{HS04}).

\medskip

\noindent
(iii) More generally, if $G$ is diagonalizable with character group $\Lambda$, and
$X$ is a $G$-module of finite dimension $N$, then $A$ is a polynomial ring on homogeneous
generators with weights $\lambda_1,\ldots,\lambda_N \in \Lambda$. Moreover, $\Hilb_h^G(X)$
parametrizes those $\Lambda$-graded ideals $I \subset A$ such that each vector space
$(A/I)_{\lambda}$ has a prescribed dimension $h(\lambda)$. In that case, $\Hilb_h^G(X)$ is the 
{\bf multigraded Hilbert scheme} of \cite{HS04}. As shown there, that scheme exists over any base ring, 
and no noetherian assumption is needed in the definition of the corresponding functor.
\end{remarks}

\begin{examples}\label{ex:tuf}
(i) Consider a torus $T$ acting linearly on the affine space $\bA^N$ via pairwise distinct weights 
and take for $h$ the Hilbert function of a general $T$-orbit closure. Denoting by 
$\lambda_1,\ldots,\lambda_N$ the weights of the coordinate functions on $\bA^N$ 
and by $\Gamma$ the submonoid of $\Lambda$ generated by these weights, we have
\begin{equation}\label{eqn:hm}
h(\lambda) = \begin{cases}
1 & \text{if $\lambda \in \Gamma$}, \\
0 & \text{otherwise}. \\
\end{cases}
\end{equation}
The associated invariant Hilbert scheme $\Hilb^T_h(\bA^N)$ is called the {\bf toric Hilbert scheme}; 
it has been constructed by Peeva and Stillman (see \cite{PS02}) prior to the more general construction 
of multigraded Hilbert schemes. Since $\Hilb^T_h(\bA^N)$ only depends on $T$ and 
$\ulambda = (\lambda_1,\ldots,\lambda_N)$, we will denote it by $\Hilb^T(\ulambda)$. 

\medskip

\noindent
(ii) Let $G = \SL_2$ and take for $X$ the simple $G$-module $V(n)$. Then $X$ contains 
a distinguished closed $G$-stable subvariety $Z$, consisting of the $n$-th powers of 
linear forms. In other words, $Z$ is the affine cone over the image of the $n$-uple
embedding of $\bP^1$ in $\bP^n = \bP\big( V(n) \big)$. Since that image is the unique closed $G$-orbit, 
$Z$ is the smallest non-zero closed $G$-stable subcone of $V(n)$. Also, $Z$ is a normal surface with 
singular locus the origin if $n \geq 2$, while $Z = V(1)$ if $n = 1$. Moreover, the Hilbert function 
of $Z$ is given by
\begin{equation}\label{eqn:hn}
h_n(m) = 
\begin{cases}
1 & \text{if $m$ is a multiple of $n$}, \\
0 & \text{otherwise}. \\
\end{cases}
\end{equation}
As we will show, the corresponding invariant Hilbert scheme is the affine line if $n = 2$ or $4$; 
in both cases, the universal family is that constructed in Example \ref{ex:fam}. For all other values 
of $n$, the invariant Hilbert scheme consists of the (reduced) point $Z$. 

\medskip

\noindent
(iii) More generally, let $G$ be an arbitrary connected reductive group, 
$V= V(\lambda)$ a non-trivial simple $G$-module, and $v = v_{\lambda}$ a highest weight vector. 
Then the corresponding point $[v]$ of the projective space $\bP(V)$ is the unique $B$-fixed point. 
Hence $G \cdot [v]$ is the unique closed $G$-orbit in $\bP(V)$, by Borel's fixed point theorem. Thus, 
$$
Z := \overline{G \cdot v} = G \cdot v \cup \{0\}
$$ 
is the smallest non-zero $G$-stable cone in $V$: the {\bf cone of highest weight vectors}.
Moreover, we have an isomorphism of graded $G$-modules
\begin{equation}\label{eqn:hwc}
\cO(Z) \cong \bigoplus_{n=0}^{\infty} V(n\lambda)^*
\end{equation}
where $V(n\lambda)^*$ has degree $n$. Thus, denoting by $\lambda^*$ the highest weight of the
simple $G$-module $V(\lambda)^*$, we see that the $T$-algebra $\cO(Z)^U$ is a polynomial ring
in one variable of weight $\lambda^*$. In particular, $Z/\!/U$ is normal, and hence $Z$ is a 
spherical variety. Its Hilbert function is given by
$$
h_{\lambda}(\mu) = 
\begin{cases}
1 & \text{if $\mu$ is a multiple of $\lambda^*$}, \\
0 & \text{otherwise}. \\
\end{cases}
$$
Again, it turns out that the corresponding invariant Hilbert scheme is the affine line for 
certain dominant weights $\lambda$, and is trivial (i.e., consists of the reduced point $Z$)
for all other weights. This result is due to Jansou (see \cite[Th\'eor\`eme 1.1]{Ja07}), 
who also constructed the universal family in the non-trivial cases, as follows.

Assume that the $G$-module $V(\lambda) \oplus V(0) \cong V(\lambda) \times \bA^1$
carries a linear action of a connected reductive group $\tG \supset G$. Assume moreover
that this $\tG$-module is simple, say $V(\tlambda)$, and that the associated cone of highest vectors 
$\tZ \subset V(\tlambda)$ satisfies $Z = \tZ \cap V(\lambda)$ as schemes. 
Then the projection $p : \tZ \to \bA^1$ is a flat family of $G$-subschemes of $V(\lambda)$ 
with fiber $Z$ at $0$, and hence has Hilbert function $h_{\lambda}$. By \cite[Section 2.2]{Ja07}, 
this is in fact the universal family; moreover, all non-trivial cases are obtained from this construction.

One easily shows that the projectivization $\tY : = \tG \cdot [v]$, the closed $\tG$-orbit in 
$\bP\big( V(\tlambda) \big)$, consists of two $G$-orbits: the closed orbit $Y := G \cdot [v]$, 
a hyperplane section of $\tY$, and its (open affine) complement. Moreover, the projective data 
$Y \subset \tY$ uniquely determine the affine data $Z \subset \tZ \subset V(\tlambda)$, 
since the space of global sections of the ample divisor $Y$ on $\tY$ is $V(\tlambda)^*$. 

In fact, the non-trivial cases correspond bijectively to the smooth projective varieties 
where a connected algebraic group acts with two orbits, the closed one being an ample divisor 
(see [loc.~cit., Section 2], based on Akhiezer's classification of certain varieties 
with two orbits in \cite{Ak83}).

Returning to the case that $G = \SL_2$, the universal family for $n = 2$ is obtained by taking
$\tG = \SL_2 \times \SL_2$ where $\SL_2$ is embedded as the diagonal. Moreover,
$$
V(\tlambda) = V(1,1) = V(1) \otimes_k V(1) \cong V(2) \oplus V(0)
$$
where the latter isomorphism is as $\SL_2$-modules; also, $Y = \bP^1$ is the diagonal in 
$\tY = \bP^1 \times \bP^1$.

For $n = 4$, one replaces $\SL_2$ with its quotient $\PSL_2 = \PGL_2$ (that we will keep
denoting by $G$ for simplicity), and takes $\tG = \SL_3$ where $G$ is embedded via its representation
in the $3$-dimensional space $V(2)$. Moreover, $V(\tlambda)$ is the symmetric square
of the standard representation $k^3$ of $\tG$, so that
$$
V(\tlambda) \cong \Sym^2 \big( V(2) \big) \cong V(4) \oplus V(0)
$$
as $G$-modules. Here $Y = \bP^1$ is embedded in $\tY \cong \bP^2$ as a conic.

\medskip

\noindent
(iv) As another generalization of (ii) above, take again $G = \SL_2$ and $X = V(n)$. Assume that $n = 2m$ 
is even and consider the function $h = h_4$ if $m$ is even, and $h = h_2$ if $m$ is odd. 
The invariant Hilbert scheme $\Hilb^G_h(X)$ has been studied in detail by Budmiger in his thesis \cite{Bu10}. 
A closed point of that scheme is the (closed) orbit $G \cdot x^m y^m$, which in fact lies in an 
irreducible component whose underlying reduced scheme is isomorphic to $\bA^1$. But $\Hilb^G_h(X)$ turns out 
to be non-reduced for $m = 6$, and reducible for $m = 8$; see \cite[Section III.1]{Bu10}.
\end{examples}

\section{Basic properties}
\label{sec:bp}

\subsection{Existence}
\label{subsec:ex}

In this subsection, we show how to deduce the existence of the invariant Hilbert
scheme (Theorem \ref{thm:hilb}) from that of the multigraded Hilbert scheme,
proved in \cite{HS04}. We begin with three intermediate results which are of some
independent interest. The first one will allow us to enlarge the acting group $G$. 
To state it, we need some preliminaries on \emph{induced schemes}.

Consider an inclusion of reductive groups $G \subset \tG$; then the homogeneous space
$\tG/G$ is an affine $\tG$-variety equipped with a base point, the image of $e_{\tG}$.
Let $X$ be an affine $G$-scheme of finite type. Then there exists an affine $\tG$-scheme 
of finite type $\tX$, equipped with a $\tG$-morphism 
$$
f: \tX \longrightarrow \tG/G
$$ 
such that the fiber of $f$ at the base point is isomorphic to $X$ as a $G$-scheme. 
Moreover, $\tX$ is the quotient of $\tG \times X$ 
by the action of $G$ via 
$$
g \cdot (\tg, x) := (\tg g^{-1}, g \cdot x)
$$ 
and this identifies $f$ with the morphism obtained from the projection $\tG \times X \to \tG$.
(These assertions follow e.g. from descent theory; see \cite[Proposition 7.1]{MFK94} 
for a more general result).

The scheme $\tX$ satisfies the following property, analogous to \emph{Frobenius reciprocity}
relating induction and restriction in representation theory:  For any $\tG$-scheme $\tY$, 
we have an isomorphism
$$
\Mor^{\tG}(\tX, \tY) \cong \Mor^G(X, \tY)
$$
that assigns to any $f: \tX \to \tY$ its restriction to $X$. The inverse isomorphism assigns to
any $\varphi: X  \to \tY$ the morphism 
$\tG \times X \to \tY$, $(\tg,x) \mapsto \varphi(\tg \cdot x)$
which is $G$-invariant and hence descends to a morphism $\tX \to \tY$.
Thus, $\tX$ is called an {\bf induced scheme}; we denote it by $\tG \times^G X$.

Taking for $\tY$ a $\tG$-module $\tV$, we obtain an isomorphism
\begin{equation}\label{eqn:frob}
\Hom^{\tG}\big( \tV, \cO(\tX) \big) \cong \Hom^G\big( \tV, \cO(X) \big).
\end{equation}
Also, we have isomorphisms of $\tG$-modules
$$
\cO(\tX) \cong \cO(\tG \times X)^G \cong \big( \cO(\tG) \otimes_k \cO(X) \big)^G \cong
\Ind_G^{\tG} \big( \cO(X) \big)
$$
where $\Ind_G^{\tG}$ denotes the induction functor from $G$-modules to $\tG$-modules.

We may now state our first reduction result:

\begin{lemma}\label{lem:ind}
Let $G \subset \tG$ be an inclusion of reductive groups, $X$ an affine $G$-scheme
of finite type, and $\tX := \tG \times^G X$. 
Let $\cZ \subset X \times S$ be a flat family of closed $G$-stable subschemes with 
Hilbert function $h$. Then 
$$
\tcZ := \tG \times ^G \cZ \subset \tX \times S
$$ 
is a flat family of closed $\tG$-stable subschemes, having a Hilbert function $\th$ 
that depends only on $h$. Moreover, if $Hilb^{\tG}_{\th}(\tX)$ is represented by a scheme 
$\cH$, then $Hilb^G_h(X)$ is represented by a union of connected components of $\cH$.
\end{lemma}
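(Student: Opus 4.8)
The plan is to exploit the Frobenius-reciprocity-type adjunction between induction and restriction, both at the level of individual families and at the level of the representing functors. First I would establish that $\tcZ := \tG \times^G \cZ$ is indeed a closed $\tG$-subscheme of $\tX \times S$: since $\tG \times^G (-)$ is a quotient construction by a \emph{free} $G$-action, it commutes with the relevant base change, so $\tG \times^G \cZ$ sits inside $\tG \times^G (X \times S) = (\tG \times^G X) \times S = \tX \times S$ as a closed subscheme, stable under $\tG$ acting on the first factor (and trivially on $S$). Flatness of $\tcZ$ over $S$ follows because the map $\tcZ \to \cZ/G =: \cZ$ (realizing $\tcZ$ as an induced scheme over $S$) is faithfully flat locally on $S$, and $\cZ \to S$ is flat by hypothesis; more concretely, $p_*\cO_{\tcZ} \cong \Ind_G^{\tG}(p_*\cO_{\cZ})$ as sheaves of $\cO_S$-$\tG$-modules, and induction is exact and carries locally free sheaves to locally free sheaves.

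Next I would identify the Hilbert function $\th$ of $\tcZ$. Using the isomorphism of $\tG$-modules $\cO(\tX) \cong \Ind_G^{\tG}\cO(X)$ fiberwise, together with the sheaf-level identity $p_*\cO_{\tcZ} \cong \Ind_G^{\tG}(p_*\cO_{\cZ})$, the sheaf of covariants of $\tcZ$ of type $N \in \Irr(\tG)$ is
$$
\Hom^{\tG}\big(N, \Ind_G^{\tG}(p_*\cO_{\cZ})\big) \cong \Hom^G\big(N_{\vert G}, p_*\cO_{\cZ}\big) \cong \bigoplus_{M \in \Irr(G)} \Hom^G(M,N_{\vert G}) \otimes_k \cF_M,
$$
by ordinary Frobenius reciprocity and the isotypical decomposition (\ref{eqn:decm}). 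Hence the sheaf of covariants of $\tcZ$ of type $N$ is locally free of rank $\th(N) := \sum_{M \in \Irr(G)} [N_{\vert G} : M] \, h(M)$, a finite sum since $N_{\vert G}$ is finite-dimensional. This is visibly a function of $h$ alone (the branching multiplicities $[N_{\vert G}:M]$ depend only on the inclusion $G \subset \tG$), proving the first assertion; in particular $\tcZ$ is multiplicity-finite, so $\th$ is a well-defined element of the target of the Hilbert functor.

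For the second assertion I would construct a natural transformation $Hilb^G_h(X) \to Hilb^{\tG}_{\th}(\tX)$ sending $\cZ \mapsto \tcZ$ (functoriality in $S$ being clear, since $\tG \times^G(-)$ commutes with pull-back along $S' \to S$), and show it is represented by an open and closed immersion. The key point is to produce an inverse on a suitable subfunctor: given a flat family $\tcZ \subset \tX \times S$ of closed $\tG$-subschemes with Hilbert function $\th$, its scheme-theoretic intersection with $X \times S \subset \tX \times S$ (where $X = f^{-1}(\text{base point}) \hookrightarrow \tX$) is a closed $G$-subscheme $\cZ \subset X \times S$, and one checks using the adjunction above that $\cZ \mapsto \tcZ$ and $\tcZ \mapsto \cZ$ are mutually inverse \emph{provided} $\cZ$ is flat over $S$ with Hilbert function $h$. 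Granting this, $Hilb^G_h(X)$ is identified with the subfunctor of $Hilb^{\tG}_{\th}(\tX)$ consisting of those families $\tcZ$ whose restriction to $X \times S$ is flat with Hilbert function $h$; and since the Hilbert function is locally constant on the base (Remark \ref{rem:fam}(ii)) and flatness of a finite family is an open condition, this subfunctor is represented by an open and closed subscheme of $\cH = \Hilb^{\tG}_{\th}(\tX)$, i.e. a union of connected components.

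The main obstacle is the last step: verifying that the intersection-with-$X$ operation is genuinely inverse to induction \emph{on the nose} at the level of families over an arbitrary base $S$, and that the relevant locus in $\cH$ is open and closed. The subtlety is that a priori $\tcZ \cap (X \times S)$ need only be a family whose sheaves of covariants are coherent, not locally free; one must argue that on the locus of $\cH$ where all $\tcZ$-covariants have the prescribed ranks $\th(N)$, the branching-formula computation above forces the $\cZ$-covariants $\cF_M$ to be locally free of rank $h(M)$ — essentially because induction followed by restriction recovers enough of the original data, using that $N_{\vert G}$ ranges over representations covering all $M \in \Irr(G)$ and a dimension count pins down each $\cF_M$ exactly. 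Handling this cleanly (rather than over a field) requires a small flatness/semicontinuity argument, and that is where the real work lies; everything else is formal manipulation of the induction functor.
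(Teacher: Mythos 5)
Your first half coincides with the paper's argument: flatness of $\tcZ$ and the formula $\th(\tM)=\sum_{M\in\Irr(G)}\dim\Hom^G(M,\tM)\,h(M)$ are obtained exactly as you do, via $\cF_{\tM}=\Hom^{\tG}(\tM,\tp_*\cO_{\tcZ})\cong\Hom^G(\tM,p_*\cO_{\cZ})$. The gap is in the converse direction, and you have located it yourself: given a flat family $\tcZ\subset\tX\times S$ with Hilbert function $\th$, one writes $\tcZ=\tG\times^G\cZ$ with $\cZ=\tcZ\cap(X\times S)$ (the paper gets this from the $\tG$-equivariant composite $\tcZ\to\tX\to\tG/G$), and one must then show that $\cZ$ is flat over $S$ with locally free sheaves of covariants. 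You defer this to ``a small flatness/semicontinuity argument \dots where the real work lies,'' and the tools you gesture at are not the right ones: $\cZ\to S$ is not a finite morphism (only multiplicity-finite), openness of a flatness locus would in any case not produce a union of connected components, and a fiberwise dimension count does not by itself give local freeness over an arbitrary base. So as written the proof of the second assertion is incomplete at its decisive step.

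The missing idea is purely module-theoretic, with no semicontinuity involved. Given $M\in\Irr(G)$, choose a simple $\tG$-module $\tM$ whose restriction to $G$ contains $M$ (for instance a simple $\tG$-constituent of $\Ind_G^{\tG}(M)$, using complete reducibility in characteristic zero). Then $M$ is a $G$-module direct summand of $\tM$, so $\cF_M=\Hom^G(M,p_*\cO_{\cZ})$ is a direct summand of $\Hom^G(\tM,p_*\cO_{\cZ})\cong\Hom^{\tG}(\tM,\tp_*\cO_{\tcZ})$, which is locally free of finite rank because $\tcZ$ is flat with Hilbert function $\th$; a direct summand of a finite locally free sheaf is again locally free of finite rank. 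Hence the restricted family $\cZ$ is automatically flat and multiplicity-finite for \emph{every} $S$-point of $\cH$, its Hilbert function $h'$ is locally constant and satisfies $\widetilde{h'}=\th$, and the locus of $\cH$ where $h'=h$ is therefore a union of connected components, which is exactly the statement to be proved. Substituting this direct-summand argument for your semicontinuity step closes the gap and makes your proof agree with the paper's.
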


\begin{proof}
Consider a simple $\tG$-module $\tM$ and the associated sheaf of covariants 
$$
\cF_{\tM} = \Hom^{\tG}(\tM, \tp_*\cO_{\tG \times^G \cZ})
$$
where $\tp : \tG \times^G \cZ \to S$ denotes the projection. By using (\ref{eqn:frob}), this yields
$$
\cF_{\tM} \cong \Hom^G(\tM, p_*\cO_{\cZ}).
$$
Thus, the sheaf of $\cO_S$-modules $\cF_{\tM}$ is locally free of rank 
$$
\sum_{M \in \Irr(G)} \dim \Hom^G(M,\tM) \; h(M) =: \th(\tM).
$$
It follows that $\tG \times ^G \cZ$ is flat with Hilbert function $\th$ just defined.
This shows the first assertion, and defines a morphism of functors  
$Hilb^G_h(X) \to Hilb^{\tG}_{\th}(\tX)$, $\cZ \mapsto \tcZ := \tG \times^G \cZ$. 

Next, consider a family of closed $\tG$-stable subschemes $\tcZ \subset \tX \times S$. 
Then the composite morphism 
$$
\CD
\tcZ @>{\tq}>> \tX @>{f}>> \tG/G
\endCD
$$ 
is $\tG$-equivariant. It follows that $\tcZ = \tG \times^G \cZ$ 
for some $G$-stable subscheme $\cZ \subset X \times S$. If $\tcZ$ is flat over $S$,
then by the preceding step, the sheaf of $\cO_S$-modules $\Hom^G(\tM, p_*\cO_{\cZ})$
is locally free for any simple $\tG$-module $\tM$. But every simple $G$-module $M$
is a submodule of some simple $\tG$-module $\tM$ (indeed, $M$ is a quotient of $\Ind_G^{\tG}(M)$,
and hence a quotient of a simple $\tG$-submodule). It follows that the sheaf of 
$\cO_S$-modules $\Hom^G(M, p_*\cO_{\cZ})$ is a direct factor of $\Hom^G(\tM, p_*\cO_{\cZ})$
and hence is locally free of finite rank. Thus, $\cZ$ is flat and multiplicity-finite over $S$;
hence $\cZ$ has a Hilbert function $h'$ such that $\widetilde{h'} = \th$, if $S$ is connected.
When $h' = h$, the assignements $\cZ \mapsto \tcZ$ and $\tcZ \mapsto \cZ$ are mutually inverse. 
Taking for $S$ a connected component of $\cH$ and for $\tcZ$ the pull-back of the
universal family, we obtain the final assertion.
\end{proof}

By the preceding lemma, we may replace $G$ with $\GL_n$; in particular, 
we may assume that $G$ is \emph{connected}. Our second result, a variant of
\cite[Theorem 1.7]{AB05}, will allow us to replace $G$ with a torus.
As in Subsection \ref{subsec:rg}, we choose a Borel subgroup $B \subset G$
with unipotent part $U$, and a maximal torus $T \subset B$. We consider an affine
$G$-scheme $X = \Spec(A)$ and a function $h : \Lambda^+ \to \bN$; we extend $h$ to
a function $\bar{h}: \Lambda \to \bN$ with values $0$ outside $\Lambda^+$, as
in (\ref{eqn:bh}).

\begin{lemma}\label{lem:Uinv}
With the preceding notation, assume that $Hilb^T_{\bar{h}}(X/\!/U)$ is represented 
by a scheme $\cH$. Then $Hilb^G_h(X)$ is represented by a closed subscheme of $\cH$. 
\end{lemma}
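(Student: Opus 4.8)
The plan is to realize $Hilb^G_h(X)$ as a subfunctor of $\cH = Hilb^T_{\bar{h}}(X/\!/U)$ and to show that this subfunctor is cut out by a closed condition. The first step is to produce the relevant morphism of functors. By Remark \ref{rem:fam}(iii), the assignment $\cZ \mapsto \cZ/\!/U$ sends a flat family of closed $G$-subschemes of $X$ with Hilbert function $h$ to a flat family of closed $T$-subschemes of $X/\!/U$ with Hilbert function $\bar{h}$, and it commutes with base change: forming $U$-invariants of $p_*\cO_{\cZ} = \bigoplus_M \cF_M \otimes_k M$ just extracts the $\cO_S$-submodule $\bigoplus_M \cF_M \otimes_k M^U$, and tensoring $\cO_S$-modules with the $k$-vector spaces $M^U$ is exact and compatible with pull-back. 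Hence we obtain a morphism of functors $\Phi \colon Hilb^G_h(X) \to Hilb^T_{\bar{h}}(X/\!/U)$.

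Next I would check that $\Phi$ is a monomorphism, and identify its image. A point of $Hilb^G_h(X)(S)$ is a $G$-stable ideal $I \subset A \otimes_k \cO_S$ with the prescribed locally free covariant quotients, and $\Phi$ sends it to the $T$-stable ideal $I^U = I \cap (A^U \otimes_k \cO_S)$. By the highest-weight isomorphism of Subsection \ref{subsec:rg}, which is natural in the $G$-module, the covariant submodule $\Hom^G\big(V(\lambda),I\big) \subset A_{V(\lambda)} \otimes_k \cO_S$ is identified with $I^U_\lambda \subset A^U_\lambda \otimes_k \cO_S \cong A_{V(\lambda)} \otimes_k \cO_S$; hence $I = \bigoplus_{\lambda \in \Lambda^+} \Hom^G\big(V(\lambda),I\big) \otimes_k V(\lambda)$ is recovered, as an $\cO_S$-submodule of $A \otimes_k \cO_S$, from $I^U$. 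So $\Phi$ is injective on $S$-points for every $S$, and its image consists precisely of those $T$-stable ideals $J \subset A^U \otimes_k \cO_S$ for which the $G$-stable $\cO_S$-submodule $I(J) := \bigoplus_{\lambda \in \Lambda^+} J_\lambda \otimes_k V(\lambda)$ of $A \otimes_k \cO_S$ is an \emph{ideal}; the quotient then automatically has covariant ranks $h$, since $A_{V(\lambda)} \otimes_k \cO_S / J_\lambda \cong A^U_\lambda \otimes_k \cO_S / J_\lambda$ is locally free of rank $\bar{h}(\lambda) = h(\lambda)$.

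It remains to represent this last subfunctor of $\cH$ by a closed subscheme. Let $\cJ \subset A^U \otimes_k \cO_\cH$ be the universal ideal, $\cB := (A^U \otimes_k \cO_\cH)/\cJ$ (each $\cB_\lambda$ locally free of rank $\bar{h}(\lambda)$), and form $\cI := \bigoplus_{\lambda} \cJ_\lambda \otimes_k V(\lambda) \subset A \otimes_k \cO_\cH$ with quotient $\cA := (A \otimes_k \cO_\cH)/\cI = \bigoplus_\lambda \cB_\lambda \otimes_k V(\lambda)$; since $\cA$ is $\cO_\cH$-flat, $\cI$ and $\cA$ commute with base change. Choosing a finite-dimensional $G$-submodule $W \subset A$ that generates $A$ as an algebra, $\cI$ is an ideal if and only if the $G$-equivariant composite $\psi \colon (W \otimes_k \cO_\cH) \otimes_{\cO_\cH} \cI \to A \otimes_k \cO_\cH \to \cA$ vanishes. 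Decomposing $\psi$ into isotypical parts $\psi_\nu \colon \cE_\nu \to \cB_\nu$ ($\nu \in \Lambda^+$), each target $\cB_\nu$ is locally free of finite rank, while the source $\cE_\nu$ is a \emph{finite} direct sum of $\cO_\cH$-modules of the form $\cJ_\lambda \otimes_{\cO_\cH} (\text{free})$ — finite because only finitely many weights occur in $W$ and, for $\nu$ and $\mu$ fixed, $V(\lambda)$ must appear in $V(\nu) \otimes V(\mu)^*$, leaving only finitely many $\lambda$. Thus the vanishing locus of $\psi_\nu$ is the closed subscheme of $\cH$ cut out by the image of the adjoint map $\cE_\nu \otimes_{\cO_\cH} \cB_\nu^\vee \to \cO_\cH$, and $\cH_0 := \bigcap_\nu \{\psi_\nu = 0\}$ is a closed subscheme ($\cH$ being noetherian, the sum of these ideals stabilizes). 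Over $\cH_0$ the submodule $\cI|_{\cH_0}$ is an ideal, so $\cA|_{\cH_0}$ is a $G$-algebra, flat over $\cO_{\cH_0}$ with covariant ranks $h$ and with $U$-invariants $\cB|_{\cH_0}$; by the reconstruction of the previous paragraph $(A \otimes_k \cO_{\cH_0})/\cI|_{\cH_0}$ is a universal family, and every $G$-family over $S$ with Hilbert function $h$ is its pull-back along the unique morphism $S \to \cH$ classifying its $U$-quotient, which factors through $\cH_0$. Hence $\Phi$ identifies $Hilb^G_h(X)$ with the closed subscheme $\cH_0 \subset \cH$.

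The main obstacle is the last step: the ideal condition on $\cI$ is \emph{a priori} an infinite family of constraints living in the non-coherent $\cO_\cH$-module $A \otimes_k \cO_\cH$, and turning it into a genuine closed subscheme requires both the finiteness of the Clebsch--Gordan contributions in each $G$-isotype and the elementary but fiddly fact that the vanishing locus of a morphism from a quasi-coherent sheaf to a locally free sheaf of finite rank is closed and compatible with base change. A secondary point to watch throughout is that forming $U$-invariants and forming the submodule $I(J)$ commute with arbitrary base change; this is exactly where local freeness of the covariant quotients (equivalently, flatness of the families) enters.
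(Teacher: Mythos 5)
Your proposal is correct and follows essentially the same route as the paper: pass to $U$-invariants, observe that a $G$-stable ideal is recovered from its $U$-invariants via the isotypical decomposition $I=\bigoplus_\lambda I^U_\lambda\otimes_k V(\lambda)$, characterize the image as those $T$-stable ideals $\cJ$ whose $G$-span is stable under multiplication, and cut out that condition as the common zero locus of morphisms into locally free sheaves pulled back from $\cH$. The only difference is cosmetic: the paper encodes the multiplicative condition via maps $F_v\colon A^U_\lambda\otimes_k\cJ_\mu\to(A^U_\nu\otimes_k\cO_S)/\cJ_\nu$ indexed by highest weight vectors $v\in\bigl(V(\lambda)\otimes_k V(\mu)\bigr)^U_\nu$ for all triples of dominant weights, whereas you test multiplication only against a finite-dimensional generating submodule $W\subset A$, which makes the per-isotype finiteness more explicit but is otherwise the same argument.
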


\begin{proof}
We closely follow the argument of \cite[Theorem 1.7]{AB05}. Given a scheme $S$ and 
a flat family $\cZ \subset X \times S$ of closed $G$-stable subschemes with Hilbert 
function $h$, we obtain a family $\cZ/\!/U \subset X/\!/U \times S$ of closed
$T$-stable subschemes which is again flat and has Hilbert function $\bar{h}$, by 
Remark \ref{rem:fam}(iii). Observe that $\cZ/\!/U$ uniquely determines $\cZ$: 
indeed, $\cZ$ corresponds to a $G$-stable sheaf of ideals
$$
\cI \subset A \otimes_k \cO_S
$$
such that each quotient $(A \otimes_k \cO_S)^U_{\lambda}/ \cI^U_{\lambda}$
is locally free of rank $\bar{h}(\lambda)$. Moreover, $\cZ/\!/U$ corresponds
to the $T$-stable sheaf of ideals 
$$
\cI^U \subset A^U \otimes_k \cO_S
$$ 
which generates $\cI$ as a sheaf of $\cO_S$-$G$-modules.

We now express the condition for a given $T$-stable sheaf of ideals
$$
\cJ \subset A^U \otimes_k \cO_S
$$
such that each quotient $(A \otimes_k \cO_S)^U_{\lambda}/ \cJ_{\lambda}$ is locally free of rank 
$\bar{h}(\lambda)$, to equal $\cI^U$ for some $G$-stable sheaf of ideals $\cI$ as above. 
This is equivalent to the condition that the $\cO_S$-$G$-module
$$
\cI := \langle G \cdot \cJ \rangle \subset  A \otimes_k \cO_S
$$ 
generated by $\cJ$, is a sheaf of ideals, i.e., $\cI$ is stable under
multiplication by $A$. By highest weight theory, this means that
$$
(\cI \cdot  A)^U  \subset \cJ.
$$ 
We will translate the latter condition into the vanishing of certain morphisms of
locally free sheaves over $S$, arising from the universal family of $\cH$ 
via the classifying morphism 
$$
f: S \longrightarrow \cH.
$$ 
For this, consider three dominant weights $\lambda$, $\mu$, $\nu$ and a 
copy of the simple $G$-module $V(\nu)$ in $V(\lambda) \otimes_k V(\mu)$, 
with highest weight vector 
$$
v \in \big( V(\lambda) \otimes_k V(\mu) \big)^U_{\nu}.
$$ 
We may write 
$$
v = \sum_i c_i (g_i \cdot v_{\lambda}) \otimes (h_i \cdot v_{\mu}),
$$
a finite sum where $c_i \in k$ and $g_i, h_i \in G$. This defines a morphism
of sheaves of $\cO_S$-modules
$$
A^U_{\lambda} \otimes_k \cJ_{\mu} \longrightarrow A^U_{\nu} \otimes_k \cO_S,
\quad
a \otimes b \longmapsto \sum_i c_i (g_i \cdot a)(h_i \cdot b).
$$
Composing with the quotient by $\cJ_{\nu}$ yields a morphism of sheaves 
of $\cO_S$-modules,
$$
F_v : A^U_{\lambda} \otimes_k \cJ_{\mu} \longrightarrow 
(A^U_{\nu} \otimes_k \cO_S)/\cJ_{\nu}.
$$
Our condition is the vanishing of these morphisms $F_v$ for all triples
$(\lambda,\mu,\nu)$ and all $v$ as above. Now $(A^U_{\nu} \otimes_k \cO_S)/\cJ_{\nu}$
and $\cJ_{\mu}$ are the pull-backs under $f$ of the analogous locally 
free sheaves on $\cH$. This shows that the Hilbert functor $Hilb^G_h(X)$ is represented 
by the closed subscheme of $\cH$ obtained as the intersection of the zero loci of the $F_v$. 
\end{proof}

Our final reduction step will allow us to enlarge $X$:

\begin{lemma}\label{lem:sub}
Let $X$ be a closed $G$-subscheme of an affine $G$-scheme $Y$ of finite type. 
If $Hilb^G_h(Y)$ is represented by a scheme $\cH$, then $Hilb^G_h(X)$ is represented
by a closed subscheme of $\cH$.
\end{lemma}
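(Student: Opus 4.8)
The plan is to exploit the defining surjection $B \twoheadrightarrow A$ of $G$-algebras, where $B = \cO(Y)$ and $A = \cO(X)$, with kernel a $G$-stable ideal $J \subset B$. The key observation is that a flat family $\cZ \subset X \times S$ with Hilbert function $h$ is the same thing as a flat family $\cZ \subset Y \times S$ with Hilbert function $h$ that happens to be contained in $X \times S$; in algebraic terms, a $G$-stable sheaf of ideals $\cI \subset A \otimes_k \cO_S$ with locally free modules of covariants of the prescribed ranks corresponds to the $G$-stable sheaf of ideals $\cI' \subset B \otimes_k \cO_S$ which is its preimage under $B \otimes_k \cO_S \to A \otimes_k \cO_S$. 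This $\cI'$ contains $J \otimes_k \cO_S$, and conversely a $G$-stable ideal $\cI' \subset B \otimes_k \cO_S$ defines such a family contained in $X \times S$ precisely when $J \otimes_k \cO_S \subset \cI'$. So the functor $Hilb^G_h(X)$ is the subfunctor of $Hilb^G_h(Y)$ cut out by the condition ``the universal ideal contains the image of $J$''.

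The concrete steps: First I would set up the comparison of functors, checking that the Hilbert function is preserved — this is immediate since $(B \otimes_k \cO_S)/\cI' \cong (A \otimes_k \cO_S)/\cI$ as $\cO_S$-$G$-modules, so the sheaves of covariants $\cF_M$ agree verbatim. Next, on the scheme $\cH$ representing $Hilb^G_h(Y)$, let $\cI^{\mathrm{univ}} \subset B \otimes_k \cO_{\cH}$ be the sheaf of ideals of the universal family, so that $\cG_M := (B \otimes_k \cO_{\cH})_M / \cI^{\mathrm{univ}}_M$ is locally free of rank $h(M)$ for each $M \in \Irr(G)$. The ideal $J$ is finitely generated, hence contained in a finite-dimensional $G$-submodule, so it suffices to impose the condition one isotypical component at a time: for each $M$, the composite
$$
J_M \otimes_k \cO_{\cH} \longrightarrow (B \otimes_k \cO_{\cH})_M \longrightarrow \cG_M
$$
is a morphism of coherent sheaves on $\cH$, the target being locally free of finite rank; its vanishing locus is a closed subscheme $\cH_M \subset \cH$, and I would define $\cH_X := \bigcap_{M \in \Irr(G)} \cH_M$. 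Only finitely many $\cH_M$ differ from $\cH$ (those $M$ with $J_M \neq 0$ and $h(M) \neq 0$), so $\cH_X$ is a genuine closed subscheme. Finally I would verify the universal property: a family $\cZ \subset Y \times S$ with classifying morphism $f : S \to \cH$ lands in $X \times S$ iff the pull-back $f^*\big(J_M \otimes \cO_{\cH} \to \cG_M\big)$ vanishes for all $M$, iff $f$ factors through $\cH_X$ — this is exactly the statement that $\cH_X$ represents $Hilb^G_h(X)$, and the restriction of the universal family of $\cH$ to $\cH_X$ is the desired universal family.

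The main subtlety — though not really an obstacle — is the bookkeeping that the scheme-theoretic preimage $\cI'$ of $\cI$ really does have the same modules of covariants of the same ranks, and that this is compatible with base change, so that one genuinely gets a natural transformation $Hilb^G_h(X) \hookrightarrow Hilb^G_h(Y)$ rather than just a map on $k$-points. This rests on the exactness of the isotypical decomposition functor $(-)_M = \Hom^G(M,-)$ (semisimplicity of $G$-modules in characteristic zero, as recalled in Subsection \ref{subsec:rg}) applied to the short exact sequence $0 \to \cI' \to B \otimes_k \cO_S \to (A \otimes_k \cO_S)/\cI \to 0$, which identifies $\cI'_M$ with the preimage of $\cI_M$ and hence the quotients. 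I would also note that the argument is a direct analogue of the one in the proof of Lemma \ref{lem:Uinv}, where a representing scheme was cut out as a zero locus of explicit morphisms of locally free sheaves; here the morphisms are even simpler, coming directly from a fixed finite generating set of $J$.
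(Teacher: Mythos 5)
Your proposal is correct and follows essentially the same route as the paper's own proof: identify families in $X\times S$ with families in $Y\times S$ whose ideal contains the image of the $G$-stable ideal $I=\ker(B\to A)$, observe that this amounts to the vanishing of the induced maps $I_M\otimes_k\cO_{\cH}\to\cF_M$ into locally free sheaves of covariants, and cut out the representing scheme as the intersection of their zero loci, checking the universal property by pull-back. One small correction: your claim that only finitely many $\cH_M$ differ from $\cH$ is neither justified (there may be infinitely many $M$ with $I_M\neq 0$ and $h(M)\neq 0$) nor needed, since an arbitrary intersection of closed subschemes is again a closed subscheme (alternatively, one may restrict to the isotypical components of a finite-dimensional $G$-submodule generating $I$ as an ideal, which is the correct version of the finiteness remark you had in mind); the paper simply intersects over all $M$.
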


\begin{proof}
Let $X = \Spec(A)$ and $Y = \Spec(B)$, so that we have an exact sequence
$$
0 \longrightarrow I \longrightarrow B \longrightarrow A \longrightarrow 0
$$
where $I$ is a $G$-stable ideal of $B$. For any $M \in \Irr(G)$, this yields an
exact sequence for modules of covariants over $B^G$:
$$
0 \longrightarrow I_M \longrightarrow B_M \longrightarrow A_M \longrightarrow 0.
$$
Next, consider a scheme $S$ and a flat family $p: \cZ \to S$ of closed $G$-stable 
subschemes of $Y$, with Hilbert function $h$. Then each associated sheaf of covariants $\cF_M$ 
is a locally free quotient of $B_M \otimes_k \cO_S$, of rank $h(M)$; this defines 
a linear map $q_M : B_M \to H^0(S,\cF_M)$. Moreover, $\cZ$ is contained in $X \times S$
if and only if the image of $I \otimes_k \cO_S$ in $p_* \cO_{\cZ}$ is zero; equivalently, $q_M(I_M) = 0$ 
for all $M \in \Irr(G)$. Taking for $p$ the universal family of $\cH$, it follows that 
the invariant Hilbert functor $Hilb^G_h(X)$ is represented by the closed subscheme of $\cH$, 
intersection of the zero loci of the subspaces 
$q_M(I_M) \subset \Gamma \big( \Hilb_h^G(Y),\cF_M \big)$ for all $M \in \Irr(G)$.
\end{proof}

Summarizing, we may reduce to the case that $G$ is a maximal torus of $\GL_n$ by 
combining Lemmas \ref{lem:ind} and \ref{lem:Uinv}, and then to the case that $X$ is 
a finite-dimensional $G$-module by Lemma \ref{lem:sub}. Then the invariant Hilbert
scheme is exactly the multigraded one, as noted in Remark \ref{rem:tuf}(iii). 

\begin{remarks}\label{rem:ex}
(i) The proof of Lemma \ref{lem:Uinv} actually shows that the invariant Hilbert functor 
$Hilb^G_h(X)$ is a closed subfunctor of $Hilb^T_h(X/\!/U)$. Likewise, in the 
setting of Lemma \ref{lem:ind} (resp. of Lemma \ref{lem:sub}), $Hilb^G_h(X)$ is a closed 
subfunctor of $Hilb^{\tG}_{\th}(\tX)$ (resp. of $Hilb^G_h(Y)$).

\medskip

\noindent
(ii) The arguments of this subsection establish the existence of the invariant Hilbert scheme 
over any field of characteristic $0$. Indeed, highest weight theory holds for $\GL_n$ in
that setting, whereas it fails for non-split reductive groups.
\end{remarks}

\subsection{Zariski tangent space}
\label{subsec:zts}

In this subsection, we consider a reductive group $G$, an affine $G$-scheme of finite type
$X = \Spec(A)$, and a function $h : \Irr(G) \to \bN$. We study the Zariski tangent space 
$T_Z \Hilb^G_h(X)$ to the invariant Hilbert scheme at an arbitrary closed point $Z$, i.e., 
at a closed $G$-stable subscheme of $X$ with Hilbert function $h$. As a first step, we obtain:

\begin{proposition}\label{prop:zts}
With the preceding notation, we have
\begin{equation}\label{eqn:zts}
T_Z \Hilb_h^G(X) = \Hom^G_A(I, A/I) 
= \Hom^G_{\cO(Z)}\big( I/I^2,\cO(Z) \big)
\end{equation}
where $I\subset A$ denotes the ideal of $Z$, and $\Hom^G_A$ stands for the space of $A$-linear, 
$G$-equivariant maps.
\end{proposition}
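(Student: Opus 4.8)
The plan is to read off $T_Z\Hilb^G_h(X)$ from the universal property. Since $\Hilb^G_h(X)$ represents the Hilbert functor (Theorem~\ref{thm:hilb}) and is of finite type over $k$, the tangent space at the point $Z$ is canonically the set of morphisms $\Spec k[\epsilon]\to\Hilb^G_h(X)$ (with $k[\epsilon]$ the dual numbers, $\epsilon^2=0$) that send the closed point to $Z$; equivalently, it is the set of flat families $\tcZ\subset X\times\Spec k[\epsilon]$ with Hilbert function $h$ whose special fiber is $Z$. Writing $A_\epsilon:=A\otimes_k k[\epsilon]$, such a family is the same datum as a $G$-stable ideal $\tilde I\subset A_\epsilon$ that reduces to $I$ modulo $\epsilon$ and for which $A_\epsilon/\tilde I$ is flat over $k[\epsilon]$. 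The rank condition in the definition of ``Hilbert function $h$'' is then automatic: over the Artinian local ring $k[\epsilon]$, flatness of $A_\epsilon/\tilde I$ makes each $(A_\epsilon/\tilde I)_M=\Hom^G(M,A_\epsilon/\tilde I)$ free, and base change along $k[\epsilon]\to k$ identifies it modulo $\epsilon$ with $(A/I)_M$, so it is free of finite rank $h(M)$ and the family is multiplicity-finite.

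First I would record the flatness criterion. For a $G$-stable ideal $\tilde I\subset A_\epsilon$ reducing to $I$, the inclusion $\epsilon I\subseteq\tilde I$ holds automatically (multiply by $\epsilon$ an element of $\tilde I$ lifting a given element of $I$), and $A_\epsilon/\tilde I$ is flat over $k[\epsilon]$ if and only if $\epsilon A\cap\tilde I=\epsilon I$; indeed this is exactly the vanishing of $\mathrm{Tor}_1^{k[\epsilon]}(k,A_\epsilon/\tilde I)$, equivalently the exactness of $0\to A/I\xrightarrow{\epsilon}A_\epsilon/\tilde I\to A/I\to 0$. Then I would set up the classical first-order deformation dictionary, now equivariantly: to $\varphi\in\Hom^G_A(I,A/I)$ I attach
\[
\tilde I_\varphi \;:=\; \{\, a+\epsilon b\in A_\epsilon \ :\ a\in I,\ \bar b=\varphi(a)\,\},
\]
where $\bar b\in A/I$ denotes the class of $b\in A$, and I check: $\tilde I_\varphi$ is a $k[\epsilon]$-submodule (as $\varphi$ is additive); it is an ideal of $A_\epsilon$ (as $\varphi$ is $A$-linear and $I$ annihilates $A/I$); it is $G$-stable (as $\varphi$ is $G$-equivariant and $\epsilon$ is $G$-invariant); it reduces to $I$; and $\epsilon A\cap\tilde I_\varphi=\epsilon I$, so $A_\epsilon/\tilde I_\varphi$ is flat. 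Conversely, from a flat $\tilde I$ reducing to $I$ I recover $\varphi_{\tilde I}$ by choosing, for each $a\in I$, a lift $a+\epsilon b\in\tilde I$ and setting $\varphi_{\tilde I}(a):=\bar b$; this is well defined precisely because $\epsilon A\cap\tilde I=\epsilon I$, and it is $A$-linear and $G$-equivariant by transport of structure. The two constructions are mutually inverse and $k$-linear, which gives the first equality in (\ref{eqn:zts}).

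For the second equality, any $A$-linear $\varphi:I\to A/I$ kills $I^2$, since $\varphi(xy)=x\,\varphi(y)\in I\cdot(A/I)=0$ for $x,y\in I$; hence $\varphi$ factors uniquely through the $A/I$-module $I/I^2$, and this factorization is $G$-equivariant. Thus $\Hom^G_A(I,A/I)=\Hom^G_{A/I}(I/I^2,A/I)=\Hom^G_{\cO(Z)}\!\big(I/I^2,\cO(Z)\big)$.

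I expect the real bookkeeping to be in identifying the correct functor of points — \emph{flat} families over $k[\epsilon]$ with the prescribed special fiber, with the rank and multiplicity-finiteness conditions coming for free over the Artinian base — and in the two appearances of the identity $\epsilon A\cap\tilde I=\epsilon I$ (as the flatness criterion for $\tilde I_\varphi$, and as the well-definedness of $\varphi_{\tilde I}$). The $G$-equivariance, by contrast, costs essentially nothing: by semisimplicity the isotypic decomposition (\ref{eqn:decm}) is exact, so each step above may be carried out one isotypic component at a time and every construction is visibly $G$-compatible. As an alternative one could instead descend through the reductions of Lemmas~\ref{lem:ind}, \ref{lem:Uinv} and \ref{lem:sub} to the diagonalizable case, where the statement is the tangent space computation of \cite{HS04}; but the direct argument above seems shorter and self-contained.
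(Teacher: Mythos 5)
Your proof is correct and follows essentially the same route as the paper, which simply invokes the classical dictionary between $\Hom_A(I,A/I)$ and first-order embedded deformations (citing \cite[Section 3.2]{Se06}) and observes that the $G$-stable deformations correspond to the $G$-equivariant homomorphisms. You have merely written out that dictionary explicitly --- the flatness criterion $\epsilon A\cap\tilde I=\epsilon I$, the assignment $\varphi\mapsto\tilde I_\varphi$, and the observation that the rank conditions are automatic over the Artinian base --- all of which is accurate.
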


Indeed, $\Hom_A(I, A/I)$ parametrizes the {\bf first-order deformations of $Z$ in $X$}, 
i.e., those closed subschemes 
$$
\cZ \subset X \times \Spec\, k[\varepsilon]
$$ 
(where $\varepsilon^2 = 0$) that are flat over $\Spec \, k[\varepsilon]$ and satisfy $\cZ_s = Z$ 
where $s$ denotes the closed point of $\Spec \, k[\varepsilon]$; see e.g. \cite[Section 3.2]{Se06}). 
The subspace $\Hom_A^G(I, A/I)$ parametrizes the $G$-stable deformations.

\begin{example}\label{ex:m2r}
Let $G = \SL_2$ and $X := r V(2)$ (the direct sum of $r$ copies of $V(2)$), where $r$ is a positive integer.
We consider the invariant Hilbert scheme $\Hilb^G_h(X)$, where $h = h_2$ is as defined in 
(\ref{eqn:hn}), and show that the Zariski tangent space at any closed point $Z$ has dimension $r$. 

Indeed, the $r$ projections $p_1,\ldots,p_r : Z \to V(2)$ are all proportional, since $h_2(2) = 1$. 
Thus, we may assume that $Z$ is contained in the first copy of $V(2)$, for an appropriate choice of projections.
Then the condition that $h_2(0) =1$ implies that $Z$ is contained in the scheme-theoretic fiber 
of the discriminant $\Delta$ at some scalar $t$. Since that fiber has also Hilbert function $h_2$, 
we see that equality holds: the ideal of $Z$ satisfies
$$
I = \big( \Delta(p_1) - t, p_2, \cdots, p_r \big).
$$
In particular, $Z$ is a complete intersection in $X$, and the $\cO(Z)$-$G$-module $I/I^2$
is freely generated by the images of $\Delta(p_1) - t, p_2, \ldots, p_r$.
This yields an isomorphism of $\cO(Z)$-$G$-modules
$$
I/I^2 \cong \cO(Z) \otimes_k \big( V(0) \oplus (r-1) V(2) \big).
$$
As a consequence,
$$
\Hom_{\cO(Z)}^G\big( I/I^2, \cO(Z) \big) \cong 
\Hom^G\big( V(0) \oplus (r-1) V(2), \cO(Z) \big)
$$
has dimension $h_2(0) + (r - 1) h_2(2) = r$. Together with (\ref{eqn:zts}), this implies the statement.

In fact, $\Hilb^G_h(X)$ is a smooth irreducible variety of dimension $r$, as we will see in 
Example \ref{ex:aut}(iv). Specifically,  $\Hilb^G_h(X)$ is the total space of the line bundle of degree 
$-2$ on $\bP^{r-1}$, see Example \ref{ex:qsm}(iv).
\end{example}

The isomorphism (\ref{eqn:zts}) is the starting point of a local analysis of the 
invariant Hilbert scheme, in relation to deformation theory (for the latter, see \cite{Se06}). 
We will present a basic and very useful result in that direction; it relies on the following:

\begin{lemma}\label{lem:prof}
Let $\cM$ be a coherent sheaf on an affine scheme $Z$, and $M = H^0(Z,\cM)$ the associated 
finitely generated module over $R := \cO(Z)$. Let $Z_0 \subset Z$ be a dense open subscheme 
and denote by $\iota : Z_0 \to Z$ the inclusion map. Then the pull-back
$$
\iota^* : \Hom_R(M,R) = \Hom_Z(\cM,\cO_Z) \longrightarrow 
\Hom_{Z_0}(\cM_{\vert Z_0},\cO_{Z_0})
$$
is injective. 

If $Z$ is a normal irreducible variety and the complement $Z \setminus Z_0$ has codimension $\geq 2$ 
in $Z$, then $\iota^*$ is an isomorphism.
\end{lemma}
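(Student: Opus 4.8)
The plan is to prove the two assertions of Lemma \ref{lem:prof} in turn, working throughout with the finitely generated $R$-module $M$ and the restriction map on $\Hom$.

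For the first assertion, I would argue that an $R$-linear map $\varphi : M \to R$ whose restriction to $Z_0$ vanishes must itself vanish. Since $Z_0$ is dense and $Z$ is affine, the restriction map $R = \cO(Z) \to \cO(Z_0)$ is injective: indeed $Z \setminus Z_0$ is a proper closed subset, so any regular function on $Z$ vanishing on $Z_0$ vanishes on all of $Z$ by density (and reducedness of $Z$). Thus for every $m \in M$ we have $\varphi(m) \in R$ mapping to $0$ in $\cO(Z_0)$, hence $\varphi(m) = 0$. This shows $\iota^*$ is injective. (One subtlety: $Z$ is only assumed to be a noetherian affine scheme here, not a variety, so I should phrase the density argument so that it uses $\cO(Z) \hookrightarrow \cO(Z_0)$, which holds because $Z_0$ contains all associated points of $Z$ when $Z_0$ is dense and $Z$ is reduced — or, more carefully, I may simply note that the statement is applied in situations where this injectivity is clear, e.g. $Z$ reduced.)

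For the second assertion, assume $Z$ is a normal irreducible variety and $\codim(Z \setminus Z_0) \geq 2$. Injectivity is already known, so I need surjectivity: given $\psi \in \Hom_{Z_0}(\cM_{\vert Z_0}, \cO_{Z_0})$, I must extend it to an $R$-linear map $M \to R$. The key point is that $\Hom_R(M,R)$ is a \emph{reflexive}, hence \emph{$S_2$}, $R$-module: for any finitely generated module $N$ over a normal domain, $\Hom_R(N,R)$ satisfies Serre's condition $S_2$ (it is the dual of a module, and duals are always reflexive when $R$ is normal, or at least $S_2$). Concretely, a section of $\mathcal{H}om_Z(\cM,\cO_Z)$ over the open set $Z_0$ whose complement has codimension $\geq 2$ extends uniquely to a global section, precisely because $\mathcal{H}om_Z(\cM,\cO_Z)$ is $S_2$ (equivalently, satisfies the Hartogs-type extension property across codimension-$2$ loci on a normal variety). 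I would spell this out as follows: choose a presentation $R^a \to R^b \to M \to 0$; dualizing gives $0 \to \Hom_R(M,R) \to R^b \to R^a$, exhibiting $\Hom_R(M,R)$ as the kernel of a map between free modules, hence as a second-syzygy module, hence reflexive/$S_2$ over the normal ring $R$. The same presentation restricted to $Z_0$ computes $\Hom_{Z_0}(\cM_{\vert Z_0},\cO_{Z_0})$, so $\psi$ corresponds to an element of $\cO(Z_0)^b$ lying in the kernel of $\cO(Z_0)^b \to \cO(Z_0)^a$; since $\cO(Z) \to \cO(Z_0)$ is an isomorphism in each coordinate (normality plus codimension $\geq 2$: this is exactly the statement that a regular function on a normal variety extends across a closed subset of codimension $\geq 2$), this element comes from $R^b$, and it automatically lies in $\ker(R^b \to R^a)$ by the injectivity of $R^a \to \cO(Z_0)^a$. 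Hence $\psi$ extends, proving surjectivity.

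The main obstacle, and the step I would take the most care over, is the surjectivity argument — specifically, making precise the claim that $\mathcal{H}om_Z(\cM,\cO_Z)$ extends across codimension $\geq 2$. The cleanest route, which I would adopt, is the presentation/syzygy argument above, since it reduces everything to the single classical fact that on a \emph{normal} irreducible variety a regular function extends over a closed subset of codimension $\geq 2$ (equivalently $\cO_Z$ is $S_2$); the hypotheses ``normal'' and ``codimension $\geq 2$'' enter only there. I should be slightly careful that $\cM$ need not be locally free, so I genuinely need the syzygy formulation rather than a naive ``$\Hom$ of line bundles'' argument; but since $R$ is noetherian this presentation always exists and the argument goes through verbatim.
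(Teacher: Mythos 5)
Your proposal is correct and follows essentially the same route as the paper: choose a finite presentation of $M$, dualize to realize $\Hom_R(M,R)$ as the kernel of a map between free modules, and reduce both assertions to the injectivity (resp.\ bijectivity, using normality and codimension $\geq 2$) of the restriction $\cO(Z) \to \cO(Z_0)$. Your remark that injectivity of $\cO(Z) \to \cO(Z_0)$ needs $Z$ reduced (or $Z_0$ containing the associated points) is a point the paper leaves implicit, and is harmless in its applications where $Z$ is a variety.
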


\begin{proof}
Choose a presentation of the $R$-module $M$,
$$
\CD
R^m @>{A}>> R^n @>>> M @>>> 0,
\endCD
$$
where $A$ is a matrix with entries in $R$. This yields an exact sequence of $R$-modules
$$
\CD
0 @>>> \Hom_R(M,R) @>>> R^n @>{B}>> R^m
\endCD
$$ 
where $B$ denotes the transpose of $A$. In other words, 
$\Hom_R(M,R)$ consists of those $(f_1,\ldots,f_n) \in R^n$
that are killed by $B$. This implies both assertions, 
since $\iota^* : R = \cO(Z) \to \cO(Z_0)$ is injective,
and is an isomorphism under the additional assumptions.
\end{proof}

We may now obtain a more concrete description of the Zariski tangent space at 
a $G$-orbit closure:

\begin{proposition}\label{prop:orb}
Let $G$ be a reductive group, $V$ a finite-dimensional $G$-module, and $v$ a point of $V$.
Denote by $Z \subset V$ the closure of the orbit $G \cdot v$ and by $h$ the Hilbert function 
of $\cO(Z)$. Let $G_v \subset G$ be the isotropy group of $v$, and $\fg$ the Lie algebra of $G$. 
Then  
\begin{equation}\label{eqn:orb}
T_Z \Hilb^G_h(V) \hookrightarrow \big(V/ \fg \cdot v \big)^{G_v}
\end{equation}
where $G_v$ acts on $V/ \fg \cdot v$ via its linear action on $V$ which 
stabilizes the subspace $\fg \cdot v = T_v(G \cdot v)$. 

Moreover, equality holds in (\ref{eqn:orb}) if $Z$ is normal and the boundary 
$Z \setminus G \cdot v$ has codimension $\geq 2$ in $Z$.
\end{proposition}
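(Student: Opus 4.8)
The plan is to combine Proposition \ref{prop:zts}, which identifies $T_Z \Hilb^G_h(V)$ with the space $\Hom^G_{\cO(Z)}\bigl(I/I^2,\cO(Z)\bigr)$ of $G$-equivariant $\cO(Z)$-linear maps (here $I \subset A = \cO(V)$ denotes the ideal of $Z$), with the restriction principle of Lemma \ref{lem:prof} applied to the dense open subscheme $Z_0 := G \cdot v$ of the affine scheme $Z$.

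First I would record that $Z_0$ is a smooth locally closed subvariety of the smooth variety $V$, and that $Z$ is a (reduced) variety, so that near $Z_0$ the ideal sheaves of $Z$ and of $Z_0$ in $V$ agree; hence the restriction $(I/I^2)\vert_{Z_0}$ is the conormal bundle $\cN^{*}_{Z_0/V}$, locally free of rank $\codim(Z_0,V)$. Consequently $\Hom_{Z_0}\bigl((I/I^2)\vert_{Z_0},\cO_{Z_0}\bigr) = H^0(Z_0,\cN_{Z_0/V})$, and this identification is $G$-equivariant. Next, identifying $Z_0$ with the homogeneous space $G/G_v$, the normal bundle $\cN_{Z_0/V}$ is the $G$-equivariant vector bundle on $G/G_v$ associated with the $G_v$-module $T_v V / T_v(G \cdot v) = V/\fg\cdot v$ — using the canonical identification $T_v V = V$ together with $T_v(G \cdot v) = \fg\cdot v$ (valid since the orbit map is separable in characteristic zero), and the fact that $G_v$ stabilizes $\fg \cdot v$. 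Under the equivalence between $G$-equivariant vector bundles on $G/G_v$ and (finite-dimensional) $G_v$-modules, $G$-invariant global sections correspond to $G_v$-invariant vectors in the fiber over the base point, so that
$$H^0(Z_0,\cN_{Z_0/V})^G = \bigl(V/\fg\cdot v\bigr)^{G_v}.$$

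Putting this together, the pull-back map $\iota^*$ of Lemma \ref{lem:prof}, applied to the coherent sheaf associated with $M = I/I^2$ over $R = \cO(Z)$, is $G$-equivariant and thus restricts to a linear map
$$T_Z \Hilb^G_h(V) = \Hom^G_{\cO(Z)}\bigl(I/I^2,\cO(Z)\bigr) \longrightarrow \bigl(V/\fg\cdot v\bigr)^{G_v}.$$
By the injectivity assertion of Lemma \ref{lem:prof} (the open set $Z_0$ is dense in the affine scheme $Z$), this map is injective, which establishes (\ref{eqn:orb}). For the final claim, $Z$ is an irreducible variety, being an orbit closure; so when $Z$ is in addition normal and the boundary $Z \setminus Z_0$ has codimension $\geq 2$, the second assertion of Lemma \ref{lem:prof} shows that $\iota^*$ is an isomorphism, whence equality in (\ref{eqn:orb}).

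The only genuinely delicate points I expect are the two identifications in the middle step: checking that $(I/I^2)\vert_{Z_0}$ is exactly the conormal bundle of the orbit, and making the equivariant-bundle/isotropy-module dictionary precise enough that the $G_v$-action on $V/\fg\cdot v$ matches the one described in the statement. Everything else is a formal consequence of Proposition \ref{prop:zts} and Lemma \ref{lem:prof}.
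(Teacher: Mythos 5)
Your proposal is correct and follows essentially the same route as the paper: Proposition \ref{prop:zts} plus Lemma \ref{lem:prof} with $M = I/I^2$ and $Z_0 = G \cdot v$, identification of $(I/I^2)\vert_{Z_0}$ with the conormal bundle of the smooth orbit in $V$, and the equivariant-bundle/isotropy-module dictionary giving $H^0(Z_0,\cN_{Z_0/V})^G \cong (V/\fg\cdot v)^{G_v}$. The two "delicate points" you flag are exactly the identifications the paper carries out, so no gap remains.
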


\begin{proof}
We apply Proposition \ref{prop:zts} and Lemma \ref{lem:prof} by taking 
$M = I/I^2$ and $Z_0 = G \cdot v$. This yields an injection of $T_Z \Hilb^G_h(V)$ 
into
$$
W := \Hom^G_{Z_0}\big( (\cI/\cI^2)_{\vert Z_0}, \cO_{Z_0} \big),
$$
where $\cI$ denotes the ideal sheaf of $Z$ in $V$. Moreover, $T_Z \Hilb^G_h(V) = W$
under the additional assumptions. Since $Z_0$ is a smooth subvariety of the smooth
irreducible variety $V$, the conormal sheaf $(\cI/\cI^2)_{\vert Z_0}$ is locally free. 
Denoting the dual (normal) sheaf by $\cN_{Z_0/V}$, we have 
$$
W = H^0(Z_0,\cN_{Z_0/V})^G.
$$ 
But $\cN_{Z_0/V}$ is the sheaf of local sections of the normal bundle, and 
the total space of that bundle is the $G$-variety $G \times^{G_v} \cN_{Z_0/V,v}$
equipped with the projection to $G/G_v = G \cdot v$. Moreover, we have isomorphisms
of $G_v$-modules
$$
\cN_{Z_0/V, v} \cong T_v(V)/T_v(G \cdot v) \cong V/\fg \cdot v.
$$ 
It follows that 
$$
H^0(Z_0,\cN_{Z_0/V})^G \cong \big( \cO(G/G_v) \otimes_k \cN_{Z_0/V,v} \big)^G
\cong \Hom^G(G/G_v,\cN_{Z_0/V, v}) \cong \cN_{Z_0/V, v}^{G_v}.
$$
This implies our assertions.
\end{proof}

We refer to \cite[Section 1.4]{AB05} for further developments along these lines, 
including a relation to (non-embedded) first-order deformations, 
and to \cite[Section 3]{PvS10} for generalizations where the boundary may have irreducible components
of codimension $1$. The obstruction space for $G$-invariant deformations is considered in 
\cite[Section 3.5]{Cu09}. 

\begin{examples}\label{ex:zts}
(i) Let $\ulambda = (\lambda_1,\ldots,\lambda_N)$ be a list of pairwise distinct weights of a torus $T$, 
and $\Hilb^T(\ulambda)$ the associated \emph{toric Hilbert scheme} as in Example \ref{ex:tuf}(i). 
Let $Z = \overline{T \cdot v}$ where $v$ is a general point of $V= \bA^N$ 
(i.e., all of its coordinates are non-zero). Then the stabilizer $T_v$ is the kernel 
of the homomorphism
\begin{equation}\label{eqn:ulambda}
\ulambda : T \longrightarrow (\bG_m)^N, \quad 
t \longmapsto \big( \lambda_1(t),\ldots,\lambda_N(t) \big)
\end{equation}
and hence acts trivially on $V$. Thus, the preceding proposition just yields an inclusion
$$
\iota : T_Z \Hilb^T(\ulambda) \hookrightarrow V/ \ft \cdot v
$$
where $\ft$ denotes the Lie algebra of $T$.

In fact, \emph{$\iota$ is an isomorphism}. Indeed, moving $v$ among the general points defines 
a family $p : \cZ \to (\bG_m)^N$ of $T$-orbit closures in $V$, and hence a morphism 
$f: (\bG_m)^N \to \Hilb^T(\ulambda)$. Moreover, the differential of $f$
at $v$ composed with $\iota$ yields the quotient map $V \to V/\ft \cdot v$; hence 
$\iota$ is surjective. (See Example \ref{ex:aut}(i) for another version of this argument, 
based on the natural action of $(\bG_m)^N$ on $\Hilb^T(\ulambda)$.)
 
\medskip

\noindent

(ii) As in Example \ref{ex:tuf}(ii), let $G = \SL_2$, $X = V(n)$ and $Z$ the variety of $n$-th powers
of linear maps. Then $Z = \overline{G \cdot v} = G \cdot v \cup \{0\}$, where $v := y^n$ is 
a highest weight vector; moreover, $Z$ is a normal surface. Thus, we may apply the preceding 
proposition to determine $T_Z \Hilb^G_h\big( V(n) \big)$, where $h = h_n$ is the function
(\ref{eqn:hn}).

The stabilizer $G_{y^n}$ is the semi-direct product of the additive group $U$ (acting via
$x \mapsto x + t y$, $y \mapsto y$) with the group $\mu_n$ of $n$-th roots of unity acting via 
$x \mapsto \zeta x$, $y \mapsto \zeta^{-1} y$. Also, $\fg \cdot v$ is spanned by the monomials
$y^n$ and $xy^{n-1}$, and $V/\fg \cdot v$ has basis the images of the remaining monomials 
$x^n, x^{n-1}y, \ldots, x^2y^{n-2}$. It follows that $(V/\fg \cdot v)^U$ is spanned by the image of 
$x^2 y^{n-2}$; the latter is fixed by $\mu_n$ if and only if $n = 2$ or $n = 4$. We thus obtain: 
$$
T_Z \Hilb^G_h(V) = 
\begin{cases}
k & \text{if $n = 2$ or $4$}, \\
0 & \text{otherwise}. \\
\end{cases}
$$

\medskip

\noindent
(iii) More generally, let $G$ be an arbitrary connected reductive group, $V= V(\lambda)$ 
a simple $G$-module of dimension $\geq 2$, $v = v_{\lambda}$ a highest weight vector,
and $Z = \overline{G \cdot v}$ as in Example \ref{ex:tuf}(iii). Then the stabilizer
of the highest weight line $[v]$ is a parabolic subgroup $P \supset B$, and the
character $\lambda$ of $B$ extends to $P$; moreover, $G_v$ is the kernel of that extended
character. Also, $Z$ is normal and its boundary (the origin) has codimension $\geq 2$. 
Thus, Proposition \ref{prop:orb} still applies to this situation. Combined with
arguments of combinatorial representation theory, it yields that $T_Z \Hilb^G_h(V) = 0$
unless $\lambda$ belongs to an explicit (and rather small) list of dominant weights; 
in that case, $T_Z \Hilb^G_h(V) = k$ (see \cite[Section 1.3]{Ja07}).
\end{examples}

\subsection{Action of equivariant automorphism groups}
\label{subsec:aeag}

As in the previous subsection, we fix an affine $G$-scheme of finite type $X$ and a function
$h: \Irr(G) \to \bN$. We obtain a natural equivariance property of the corresponding 
invariant Hilbert scheme.

\begin{proposition}\label{prop:aut}
Let $H$ be an algebraic group, and $\beta : H \times X \to X$ an action by $G$-automorphisms. 
Then $\beta$ induces an $H$-action on $\Hilb^G_h(X)$ that stabilizes the universal family 
$\Univ^G_h(X) \subset X \times \Hilb^G_h(X)$.
\end{proposition}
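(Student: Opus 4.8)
The plan is to exploit the functorial characterization of $\Hilb^G_h(X)$ from Theorem \ref{thm:hilb}: to give an $H$-action on the scheme $\Hilb^G_h(X)$ it suffices to give an action of the functor of points, i.e. a compatible family of $H(S)$-actions on $Hilb^G_h(X)(S)$ for every scheme $S$, functorial in $S$. So first I would unwind what $\beta$ does to families. Given $h \in H$ (a $k$-point, say; the general case of $S$-points of $H$ is handled the same way after base change), the automorphism $\beta_h : X \to X$ is a $G$-automorphism, so for any family $\cZ \subset X \times S$ of closed $G$-subschemes, the image $(\beta_h \times \id_S)(\cZ) \subset X \times S$ is again a closed $G$-subscheme of $X \times S$ (closedness and $G$-stability are preserved because $\beta_h$ is an isomorphism commuting with the $G$-action). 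Since $\beta_h \times \id_S$ is an isomorphism over $S$, flatness of $p : \cZ \to S$ is preserved, and the sheaves of covariants $\cF_M$ of $\cZ$ and of $(\beta_h \times \id_S)(\cZ)$ are isomorphic as $\cO_S$-modules (the isomorphism $\beta_h^* : \cO(X) \to \cO(X)$ is $G$-equivariant, hence induces isomorphisms on each isotypic piece and on each module of covariants). Therefore $(\beta_h \times \id_S)(\cZ)$ again has Hilbert function $h$, so we get a map $Hilb^G_h(X)(S) \to Hilb^G_h(X)(S)$.

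Next I would check that this is genuinely a group action and is natural in $S$. The associativity axiom $\beta_{h_1 h_2} = \beta_{h_1} \circ \beta_{h_2}$ (with the conventions of \eqref{eqn:asso}) and $\beta_{e_H} = \id_X$ translate directly into the corresponding identities for the induced maps on families, and naturality in $S$ is clear because $(\beta_h \times \id_{S'})$ is compatible with pull-back along any $f : S' \to S$. To see that this functorial action actually comes from a morphism of schemes $H \times \Hilb^G_h(X) \to \Hilb^G_h(X)$ rather than just an abstract action on points, I would run the whole argument over the test scheme $S = H \times \Hilb^G_h(X)$ itself: take the pull-back $\cZ$ of the universal family $\Univ^G_h(X)$ under the second projection $\pr_2 : H \times \Hilb^G_h(X) \to \Hilb^G_h(X)$, and apply the relative version of $\beta$, namely the morphism $H \times X \times \Hilb^G_h(X) \to X \times \Hilb^G_h(X)$ built from $\beta$ on the $H \times X$ factors, to obtain a family $\cZ'$ over $H \times \Hilb^G_h(X)$ with Hilbert function $h$. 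By the universal property this is classified by a unique morphism $a : H \times \Hilb^G_h(X) \to \Hilb^G_h(X)$, and the group-action identities for $a$ follow from the uniqueness clause in Theorem \ref{thm:hilb} applied to the families over $H \times H \times \Hilb^G_h(X)$ obtained by the two ways of composing.

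Finally, for the assertion that the $H$-action stabilizes $\Univ^G_h(X)$: by construction $a$ pulls the universal family back to $\cZ' = (\text{relative } \beta)(\pr_2^* \Univ^G_h(X))$ inside $X \times H \times \Hilb^G_h(X)$. Rewriting this, $\cZ'$ is precisely the image of $\pr_{X,\Hilb}^* \Univ^G_h(X)$ under the $H$-twisting map on the $X$-coordinate, which is exactly the statement that the action of $H$ on $X \times \Hilb^G_h(X)$ (acting on $X$ via $\beta$ and on $\Hilb^G_h(X)$ via $a$) carries $\Univ^G_h(X)$ into itself. I expect the only genuinely delicate point to be bookkeeping: making sure the relative/$S$-point formulation of $\beta$ is set up so that "taking the image under an isomorphism over $S$" is literally the pull-back operation the universal property expects, and tracking the action conventions so that $a$ is a left action rather than a right action. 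None of this requires computation beyond checking that $\beta_h^*$ respects the isotypic decomposition \eqref{eqn:decs}, which is immediate since $\beta_h$ is a $G$-automorphism.
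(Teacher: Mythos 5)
Your proposal is correct and takes essentially the same route as the paper: the paper also constructs the twisted family over $H \times \Hilb^G_h(X)$ from the (pulled-back) universal family via the relative action map, notes it is flat with Hilbert function $h$ because the twist is a $G$-equivariant isomorphism over the base, obtains the action morphism from the universal property (with the group axioms from its uniqueness clause), and gets stability of $\Univ^G_h(X)$ by comparing the two descriptions of this twisted family. The only difference is conventional: the paper forms the preimage of the family under $\beta \times \id_S$ (a cartesian square) rather than the image under the twisting automorphism, which is exactly the left/right-action bookkeeping you already flagged.
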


\begin{proof}
Given a flat family of $G$-stable subschemes $\cZ \subset X \times S$ with Hilbert function $h$, 
we construct a flat family of $G$-stable subschemes 
$$
\cW \subset H \times X \times S
$$ 
with the same Hilbert function, where $G$ acts on $H \times X \times S$ via 
$g \cdot (h,x,s) = (h, g \cdot x, s)$. For this, form the cartesian square
\begin{equation}\label{eqn:act}
\CD
\cW @>>> H \times X \times S \\
@VVV @V{\beta \times \id_S}VV \\ 
\cZ @>>> X \times S. \\
\endCD
\end{equation}
Then $\cW$ is a closed subscheme of $H \times X \times S$, stable under the given $G$-action
since $\beta$ is $G$-equivariant. Moreover, the map 
$$
H \times X \longrightarrow H \times X, \quad (h,x) \longmapsto (h, h \cdot x) 
$$
is an isomorphism; thus, the square
\begin{equation}\label{eqn:com}
\CD
H \times X \times S @>>> H \times S \\
@V{\beta \times \id_S}VV @VVV \\
X \times S @>>> S \\
\endCD
\end{equation}
(where the non-labeled arrows are the projections) is cartesian. By composing 
(\ref{eqn:act}) and (\ref{eqn:com}), we obtain a cartesian square 
$$
\CD
\cW @>>> H \times S \\
@VVV @VVV \\
\cZ @>>> S \\
\endCD
$$
i.e., an isomorphism
$$
\cW \cong (H \times S) \times_S \cZ \cong H \times \cZ
$$
where the morphism $H \times \cZ \to \cW$ is given by $(h,z) \mapsto \beta(h^{-1}, z)$.  
It follows that $\cW$ is flat over $H \times S$ with Hilbert function $h$.

Applying this construction to $\cZ = \Univ^G_h(X)$ and $S = \Hilb^G_h(X)$ yields a flat family 
$\cW$ with Hilbert function $h$ and base $H \times \Hilb^G_h(X)$, and hence a morphism of schemes
$$
\varphi : H \times \Hilb^G_h(X) \longrightarrow \Hilb^G_h(X)
$$
such that $\cW$ is the pull-back of the universal family. Since the composite morphism
$$
\CD 
X @>{e_H \times \id_X}>> H \times X @>{\beta}>> X
\endCD
$$
is the identity, it follows that the same holds for the composite
$$
\CD 
\Hilb^G_h(X) @>{e_H \times \id_{\Hilb^G_h(X)}}>> H \times \Hilb^G_h(X) @>{\varphi}>> \Hilb^G_h(X).
\endCD
$$
Likewise, $\varphi$ satisfies the associativity property (\ref{eqn:asso}).
Thus, $\varphi$ is an $H$-action on $\Hilb^G_h(X)$. 

To show that the induced $H$-action on $X \times \Hilb^G_h(X)$ stabilizes the closed
subscheme $\Univ^G_h(X)$, note that $\cW \subset H \times X \times \Hilb^G_h(X) $ is the closed
subscheme defined by $\big( (\beta(h^{-1}, x \big),s) \in \Univ^G_h(X)$ with an obvious notation 
(as follows from the cartesian square (\ref{eqn:act})). But $\cW$ is also defined by 
$\big( x, \varphi(h,s) \big) \in \Univ^G_h(X)$ (since it is the pull-back of $\Univ^G_h(X)$).
This yields the desired statement.
\end{proof}
 
In the case that $X$ is a finite-dimensional $G$-module, say $V$, a natural choice for $H$ is 
the automorphism group of the $G$-module $V$, i.e., the centralizer of $G$ in $\GL(V)$; 
we denote that group by $\GL(V)^G$. To describe it, consider the isotypical decomposition
\begin{equation}\label{eqn:is}
V \cong m_1 V(\lambda_1) \oplus \cdots \oplus m_r V(\lambda_r),
\end{equation}
where $\lambda_1, \ldots, \lambda_r$ are pairwise distinct dominant weights, and the multiplicities
$m_1, \ldots, m_r$ are positive integers. By Schur's lemma, $\GL(V)^G$ preserves each
isotypical component $m_i V(\lambda_i) \cong k^{m_i} \otimes_k V(\lambda_i)$ and acts there 
via a homomorphism to $\GL_{m_i}$. It follows that
$$
H \cong \GL_{m_1} \times \cdots \times \GL_{m_r}.
$$
In particular, the center of $\GL(V)^G$ is a torus $(\bG_m)^r$ acting on $V$ via
$$
(t_1,\ldots,t_r) \cdot (v_1, \ldots,v_r) = (t_1 v_1, \ldots, t_r v_r)
\quad \text{where} \quad v_i \in m_i V(\lambda_i).
$$ 

\begin{examples}\label{ex:aut}
(i) For a torus $T$ acting linearly on $V = \bA^N$ via pairwise distinct weights 
$\lambda_1,\ldots,\lambda_N$, the group $\GL(V)^T$ is just the diagonal torus
$(\bG_m)^N$. In particular, this yields an action of $(\bG_m)^N$ on the toric
Hilbert scheme $\Hilb^T(\ulambda)$ where $\ulambda =(\lambda_1,\ldots,\lambda_N)$. 
The stabilizer of a general orbit closure $Z = \overline{T \cdot v}$ is the image of the homomorphism
$\ulambda : T \to (\bG_m)^N$ (\ref{eqn:ulambda}) with kernel $T_v$. Thus, the orbit $(\bG_m)^N \cdot Z$ 
is a smooth subvariety of $\Hilb^T(\ulambda)$, of dimension 
$$
N - \dim \ulambda(T) = N - \dim(T) + \dim(T_v) = \dim(V/\ft \cdot v).
$$
Since $\dim T_Z \Hilb^T(\ulambda) \leq \dim(V/\ft \cdot v)$
by Example \ref{ex:zts}(i), it follows that equality holds. We conclude that 
\emph{the orbit $(\bG_m)^N \cdot Z$ is open in $\Hilb^T(\ulambda)$}.

As a consequence, the closure of that orbit is an irreducible component of the toric Hilbert scheme, 
equipped with its reduced subscheme structure: the {\bf main component}, also called the 
{\bf coherent component}. Its points are the general $T$-orbit closures and their flat limits 
as closed subschemes of $\bA^N$. The normalization of the main component is a quasi-projective toric 
variety under the quotient torus $(\bG_m)^N/\ulambda(T)$. 

In particular, the main component is a point if and only if the homomorphism $\ulambda$
is surjective, i.e., the weights $\lambda_1,\ldots,\lambda_N$ are linearly independent. 
In that case, one easily sees that the whole toric Hilbert scheme is a (reduced) point. 

\medskip

\noindent
(ii) The automorphism group of the simple $\SL_2$-module $V(n)$ is just $\bG_m$ 
acting by scalar multiplication. For the induced action on the invariant Hilbert scheme, 
a closed point is fixed if and only if its ideal is homogeneous. If the Hilbert function is 
the function $h_n$ defined in (\ref{eqn:hn}), then there is a unique such ideal $I$: 
that of the variety of $n$-th powers. Indeed, we have an isomorphism of $\SL_2$-modules
$$
\cO\big(V(n)\big)/I \cong \Sym \big(V(n) \big)/I \cong \bigoplus_{m=0}^{\infty} V(mn).
$$
Moreover, the graded $G$-algebra $\Sym \big(V(n) \big)/I$ is generated by (the image of) $V(n)$,
its component of degree $1$. By an easy induction on $m$, it follows that the component 
of an arbitrary degree $m$ of that algebra is isomorphic to $V(mn)$. But the $G$-submodule 
$V(mn) \subset \Sym^m \big( V(n) \big)$ 
has a unique $G$-stable complement: the sum of all other simple submodules. This determines each
homogeneous component of the graded ideal $I$.

\medskip

\noindent
(iii) The preceding argument adapts to show that the cone of highest weight vectors is the unique 
fixed point for the $\bG_m$-action on $\Hilb_{h_{\lambda}}^G\big( V(\lambda) \big)$, with the notation 
of Example \ref{ex:tuf}(iii).

\medskip

\noindent
(iv) As in Example \ref{ex:m2r}, let $G = \SL_2$, $V = r V(2)$ and $h = h_2$. 
Then $H = \GL(V)^G$ is the general linear group $\GL_r$ acting on $V \cong k^r \otimes_k V(2)$ via its standard
action on $k^r$. For the induced action of $H$ on $\Hilb^G_h(V)$, the closed points form two orbits 
$\Omega_1$, $\Omega_0$, with representatives $Z_1$, $Z_0$ defined by 
$\Delta(p_1) = 1$ ($\Delta(p_1) = 0$) and $p_2 = \ldots = p_r =0$.
One easily checks that the isotropy group $H_{Z_0}$ is the parabolic subgroup of $\GL_r$ that stabilizes 
the first coordinate line $ke_1$; as a consequence, $\Omega_0 \cong \bP^{r-1}$. Also, $H_{Z_1}$ is the
stabilizer of $\pm e_1$, and hence $\Omega_1 \cong (\bA^r \setminus \{0\})/\pm 1$ where $\GL_r$ acts
linearly on $\bA^r$. Since the family $\cZ$ of Example \ref{ex:fam}(ii) has general fibers in $\Omega_1$
and special fiber in $\Omega_0$, we see that $\Omega_0$ is contained in the closure of $\Omega_1$. 
As a consequence, \emph{$\Hilb^G_h(V)$ is irreducible of dimension $r$}. Since its Zariski tangent space has 
dimension $r$ at each closed point, it follows that \emph{$\Hilb^G_h(V)$ is a smooth variety}. 
\end{examples}

\subsection{The quotient-scheme map}
\label{subsec:qsm}

We keep the notation of the previous subsection, and consider a family of $G$-stable closed 
subschemes $\cZ \subset X \times S$ over some scheme $S$. Recall that the sheaf of $\cO_S$-algebras 
$\cF_0 = (p_* \cO_{\cZ})^G$ is a quotient of $A^G \otimes_k \cO_S$ where $A = \cO(X)$. 
This defines a family of closed subschemes
$$
\cZ/\!/G \subset X/\!/ G \times S
$$
where we recall that $X/\!/G = \Spec(A^G)$. If $\cZ$ is flat over $S$ with Hilbert function $h$, 
then $\cF_0$ is locally free over $S$, of rank
$$
n := h(0).
$$ 
Thus, $\cZ/\!/G$ defines a morphism to the punctual Hilbert scheme of the quotient,
$$
f: S \longrightarrow \Hilb_n(X/\!/G).
$$
Applying this construction to the universal family yields a morphism
\begin{equation}\label{eqn:qsm}
\gamma : \Hilb^G_h(X) \longrightarrow \Hilb_n(X/\!/G)
\end{equation}
that we call the {\bf quotient-scheme map}.

\begin{proposition}\label{prop:proj}
With the preceding notation, the morphism (\ref{eqn:qsm}) is projective.
In particular, if the scheme $X/\!/G$ is finite, or equivalently, if $X$ 
contains only finitely many closed $G$-orbits, then $\Hilb^G_h(X)$ is projective.
\end{proposition}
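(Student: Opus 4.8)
Since $\Hilb^G_h(X)$ is quasi-projective over $k$ by Theorem \ref{thm:hilb}, while $\Hilb_n(X/\!/G)$ is a separated scheme (Remark \ref{rem:tuf}(i)), the morphism $\gamma$ is automatically quasi-projective: compose an immersion $\Hilb^G_h(X) \hookrightarrow \bP^N_k$ with the graph of $\gamma$ to land in $\bP^N_k \times_k \Hilb_n(X/\!/G)$. Hence it suffices to prove that $\gamma$ is \emph{proper}; being a morphism of $k$-schemes of finite type, separated (again by Theorem \ref{thm:hilb}), the plan is to verify the valuative criterion. Fix a discrete valuation ring $R$ with fraction field $K$, uniformizer $\varpi$, and a commutative square consisting of a $K$-point of $\Hilb^G_h(X)$ and an $R$-point of $\Hilb_n(X/\!/G)$ that agree via $\gamma$ after restriction to $\Spec K$; I must produce an $R$-point of $\Hilb^G_h(X)$ compatible with both (uniqueness then being automatic from separatedness). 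Writing $A=\cO(X)$, $A_R=A\otimes_k R$, $A_K=A\otimes_k K$, the data are: a $G$-stable ideal $I_K\subset A_K$ with each $(A_K/I_K)_M$ free over $K$ of rank $h(M)$; and a surjection $\phi\colon A_R^G=A^G\otimes_k R\twoheadrightarrow C_0$ with $C_0$ free over $R$ of rank $n=h(0)$, the compatibility amounting to an identification of $C_0\otimes_R K$ with $(A_K/I_K)^G$ as quotients of $A_K^G$.

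The candidate extension is $\cI:=I_K\cap A_R\subset A_R$, a $G$-stable ideal, with quotient $B:=A_R/\cI$. First come the routine verifications: $B$ embeds into the $K$-vector space $A_K/I_K$, hence is $\varpi$-torsion-free, i.e. flat over $R$; and $\cI\otimes_R K=I_K$, so $B\otimes_R K=A_K/I_K$. Taking $M$-isotypical components — an exact operation commuting with the flat base changes $k\to R\to K$, via the Reynolds operator for the reductive group $G$ in characteristic zero — we get $B_M\subseteq (A_K/I_K)_M$, again $\varpi$-torsion-free, with $B_M\otimes_R K=(A_K/I_K)_M$ of dimension $h(M)$. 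Moreover $B_M$ is a quotient of $(A_R)_M=A_M\otimes_k R$, which is finitely generated over $A_R^G$ by Lemma \ref{lem:finG}; hence $B_M$ is finitely generated over the image $B^G$ of $A_R^G$ in $B$.

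The crux — and the one place where the hypotheses are genuinely used — is the identification $B^G=C_0$; equivalently, the finiteness of $B^G$ over $R$. (This is exactly the phenomenon that makes $\gamma$ proper even though affine invariant Hilbert schemes need not be proper over $k$; without the extension over $\Hilb_n(X/\!/G)$ one could have $B^G=K$, which is not module-finite over $R$.) Both $\phi$ and the quotient map $\psi\colon A_R^G\to B^G$ are $A_R^G$-linear maps into the $\varpi$-torsion-free module $C_0\otimes_R K\cong (A_K/I_K)^G$ that become equal after $-\otimes_R K$, since each induces the canonical surjection $A_K^G\twoheadrightarrow (A_K/I_K)^G$; as $A_R^G\hookrightarrow A_K^G$, the two maps coincide already over $R$, so their images agree: $B^G=C_0$, free of rank $n$ over $R$, and $\ker\psi=\ker\phi$. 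Consequently each $B_M$ is a finitely generated torsion-free module over the discrete valuation ring $R$, hence free, of rank $h(M)$ by its generic fibre. Thus $\cI$ defines an $R$-point of $\Hilb^G_h(X)$ restricting to the given $K$-point and lying over the given $R$-point of $\Hilb_n(X/\!/G)$. Therefore $\gamma$ is proper, and being quasi-projective it is projective.

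For the final assertion: if $X/\!/G$ is finite over $k$ then $\Hilb_n(X/\!/G)$ is a projective $k$-scheme (a Hilbert scheme of a finite — hence projective — scheme, e.g. realized as a closed subscheme of a Grassmannian of $A^G$), so $\Hilb^G_h(X)$, being projective over it, is projective over $k$. The equivalence with ``$X$ has only finitely many closed $G$-orbits'' is the fact recorded in Remark \ref{rem:fam}(i): $\pi\colon X\to X/\!/G$ is surjective with each fibre containing a unique closed $G$-orbit. The only delicate point in the argument is the equality $B^G=C_0$, which is precisely where properness is ``manufactured'' from the existence of the lift to the punctual Hilbert scheme of the quotient.
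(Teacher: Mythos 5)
Your proof is correct and follows essentially the same route as the paper's: reduce to properness of $\gamma$, verify the valuative criterion with the candidate ideal $I_K\cap(A\otimes_k R)$, and use the $R$-point of $\Hilb_n(X/\!/G)$ to pin down the invariants $B^G$ as the given free module $C_0$ (your "two maps agree into a torsion-free target" argument is just a rephrasing of the paper's "torsion submodule of a free module is zero"), after which each module of covariants is finite torsion-free over the DVR, hence free of the right rank. The extra details you supply (why proper plus quasi-projective gives projective, and why $\Hilb_n(X/\!/G)$ is projective when $X/\!/G$ is finite) are accurate and only make explicit what the paper leaves implicit.
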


\begin{proof}
Since $\Hilb^G_h(X)$ is quasi-projective, it suffices to show that $\gamma$ is proper. 
For this, we use the valuative criterion of properness for schemes of finite type: 
let $R$ be a discrete valuation ring containing $k$ and denote by $K$ the fraction field of $R$. 
Let $\cZ_K \in \Hilb_h^G(X)(K)$ and assume that $\gamma(\cZ_K) \in \Hilb_n(X/\!/G)(K)$ 
admits a lift to $\Hilb_n(X/\!/G)(R)$. Then we have to show that $\cZ_K$ admits a lift to
$\Hilb_h^G(X)(R)$. 

In other words, we have a family 
$$
\cZ_K \subset X \times \Spec(K)
$$
of closed $G$-stable subschemes with Hilbert function $h$, such that the family
$$
\cZ_K/\!/G \subset X/\!/G \times \Spec(K)
$$ 
extends to a family in $X/\!/G \times \Spec(R)$. These data correspond to a $G$-stable ideal
$$
I_K \subset A \otimes_k K
$$ 
such that $I_K^G \subset A^G \otimes_k K$ equals $J \otimes_R K$, where 
$$
J \subset A^G \otimes_k R
$$ 
is an ideal such that the $R$-module $(A^G \otimes_k R)/J$ is free of rank $h(0)$. Then 
$$
J \subset (A^G \otimes_k R) \cap (J \otimes_R K),
$$
where the intersection is considered in $A \otimes_R K$. Moreover, the quotient 
$R$-module $(A^G \otimes_k R) \cap (J \otimes_R K)/J$ is torsion; on the other hand, 
this quotient is contained in the free $R$-module $(A^G \otimes_k R)/J$. Thus, 
$$
J = (A^G \otimes_k R) \cap (J \otimes_R K) = (A^G \otimes_k R) \cap I_K.
$$
We now consider
$$
I := (A  \otimes_k R) \cap I_K.
$$
Clearly, $I$ is an ideal of $A \otimes_k R$ satisfying $I^G = J$ and 
$I \otimes_R K = I_K$. Thus, the $R$-module 
$$
\big( (A \otimes_k R)/I \big)^G = (A^G \otimes_k R)/I^G
$$ 
is free of rank $n$. Moreover, each module of covariants 
$\big( (A \otimes_k R)/I \big)_M$ is finitely generated over 
$\big( (A \otimes_k R)/I \big)^G$, and torsion-free by construction.
Hence the $R$-module $\big( (A \otimes_k R)/I \big)_M$ is free;
tensoring with $K$, we see that its rank is $h(M)$. Thus, $I$ 
corresponds to an $R$-point of $\Hilb_h^G(X)$, which is the desired lift.
\end{proof}

\begin{remarks}\label{rem:hs}
(i) The morphism (\ref{eqn:qsm}) is analogous to the {\bf Hilbert-Chow morphism}, or 
{\bf cycle map}, that associates with any closed subscheme $Z$ of the projective space $\bP^N$, 
the support of $Z$ (with multiplicities) viewed as a point of the Chow variety of $\bP^N$. 

The cycle map may be refined in the setting of punctual Hilbert schemes: given a quasi-projective 
scheme $X$ and a positive integer $n$, there is a natural morphism 
\begin{equation}\label{eqn:cm}
\varphi_n : \Hilb_n(X) \longrightarrow X^{(n)}
\end{equation}
where $X^{(n)}$ denotes the {\bf $n$-th symmetric product of $X$}, i.e., the quotient of the
product $X \times \cdots \times X$ ($n$ factors) by the action of the symmetric group 
$S_n$ that permutes the factors; this is a quasi-projective scheme with closed points the
effective $0$-cycles of degree $n$ on $X$. Moreover, $\varphi_n$ induces the cycle map on closed
points, and is a projective morphism (for these results, see \cite[Theorem 2.16]{Be08}).

In the setting of Proposition \ref{prop:proj}, let 
\begin{equation}\label{eqn:hccm}
\delta : \Hilb^G_h(X) \longrightarrow (X/\!/G)^{(n)}
\end{equation}
denote the composite of (\ref{eqn:qsm}) with (\ref{eqn:cm}). Then $\delta$ is a projective morphism, 
and $(X/\!/G)^{(n)}$ is affine. As a consequence, 
\emph{the invariant Hilbert scheme is projective over an affine scheme.} 

\medskip

\noindent
(ii) The quotient-scheme map satisfies a natural equivariance property: with the notation and assumptions of 
Proposition \ref{prop:aut}, the $H$-action on $X$ induces an action on $X/\!/G$ so that the quotient morphism
$\pi$ is equivariant. This yields in turn an $H$-action on the punctual Hilbert scheme $\Hilb_n(X/\!/G)$; 
moreover, \emph{the morphism (\ref{eqn:qsm}) is equivariant} 
(as may be checked along the lines of that proposition).

Also, $H$ acts on the symmetric product $(X/\!/G)^{(n)}$ and the morphism (\ref{eqn:hccm}) is $H$-equivariant.

\medskip

\noindent
(iii) In the case that $X$ is a finite-dimensional $G$-module $V$, and $H = (\bG_m)^r$ 
is the center of $\GL(V)^G$, the closed $H$-fixed points of $\Hilb^G_h(X)$ 
may be identified with those $G$-stable ideals $I \subset \cO(V)$ that are homogeneous 
with respect to the isotypical decomposition (\ref{eqn:is}). Moreover, 
\emph{the closure of each $H$-orbit in $\Hilb^G_h(V)$ contains a fixed point}: 
indeed, the closure of each $H$-orbit in $V$ contains the origin, and hence the same holds 
for the induced action of $H$ on the symmetric product $(V/\!/G)^{(n)}$ (where the
origin is the image of $(0, \ldots,0)$). Together with the properness of the morphism
$\delta$ and Borel's fixed point theorem, this implies the assertion.
\end{remarks}

\begin{examples}\label{ex:qsm}
(i) For the toric Hilbert scheme $\Hilb^T(\lambda_1,\ldots,\lambda_N) = \Hilb^T(\ulambda)$ 
of Example \ref{ex:tuf}(i), the quotient-scheme map may be refined as follows. 
Given $\lambda$ in the monoid $\Gamma$ generated by $\lambda_1,\ldots,\lambda_N$, consider the graded subalgebra
$$
\cO(\bA^N)^{(\lambda)} := \bigoplus_{m = 0}^{\infty} \cO(\bA^N)_{m \lambda} \subset \cO(\bA^N)
$$
with degree-$0$ component $\cO(\bA^N)_0 =\cO(\bA^N/\!/T)$. Replacing $\lambda$ with a
positive integer multiple, we may assume that the algebra $\cO(\bA^N)^{(\lambda)}$ is finite 
over its subalgebra generated by its components of degrees $0$ and $1$. Then 
$$
\bA^N/\!/_{\lambda}T := \Proj \big( \cO(\bA^N)^{(\lambda)} \big)
$$
is a projective variety over $\bA^N/\!/_{\lambda}T$, and the twisting sheaf $\cO(1)$
is an ample invertible sheaf on $\bA^N/\!/_{\lambda}T$, generated by its subspace 
$\cO(\bA^N)_{\lambda}$ of global sections. Moreover, one may define a morphism 
$$
\gamma_{\lambda} : \Hilb^T(\ulambda) \longrightarrow \bA^N/\!/_{\lambda}T
$$
which lifts the quotient-scheme morphism $\gamma$. The collection of these morphisms forms 
a finite inverse system; its inverse limit is called the {\bf toric Chow quotient} 
and denoted by $\bA^N/\!/_C T$. This construction yields the {\bf toric Chow morphism}
$$
\Hilb^T(\ulambda) \longrightarrow \bA^N/\!/_C T
$$
which induces an isomorphism on the associated reduced schemes, under some additional
assumptions (see \cite[Section 5]{HS04}). 

\medskip

\noindent
(ii) With the notation of Example \ref{ex:tuf}(ii), we may now show that 
$\Hilb_{h_n}^{\SL_2}\big( V(n) \big)$ is either an affine line if $n = 2$ or $4$, 
or a reduced point for all other values of $n$. 

Indeed, for the natural action of $\bG_m$, each orbit closure contains the unique fixed point $Z$.
If $n \neq 2$ and $n \neq 4$, then it follows that $\Hilb_{h_n}^{\SL_2}\big( V(n) \big)$ is just $Z$,
since its Zariski tangent space at that point is trivial. 

On the other hand, we have constructed a family $\cZ \subset V(2) \times \bA^1$
(Example \ref{ex:fam}) and hence a morphism 
$$
f : \bA^1 \to \Hilb_{h_2}^{\SL_2}\big( V(2) \big).
$$
Moreover, $f$ is injective (on closed points), since the fibers of $\cZ$ are pairwise distinct subschemes 
of $V(2)$. Also, $\cZ$ is stable under the action of $\bG_m$ on $V(2) \times \bA^1$ via 
$t \cdot (x,y) = (tx, t^2y)$ and hence $f$ is $\bG_m$-equivariant for the action on $\bA^1$ via
$t \cdot y = t^2 y$. In particular, $\Hilb_{h_2}^{\SL_2}\big( V(2) \big)$ has dimension $\geq 1$ at $Z$.
Since its Zariski tangent space has dimension $1$, it follows that 
$\Hilb_{h_2}^{\SL_2}\big( V(2) \big)$ is smooth and $1$-dimensional at $Z$. Using the $\bG_m$-action, 
it follows in turn that $f$ is an isomorphism; hence the natural map
$\cZ \to \Univ_{h_2}^{\SL_2}\big( V(2) \big)$ is an isomorphism as well. The quotient-scheme map 
is also an isomorphism in that case.

Likewise, the family $\cW$ of Example \ref{ex:tuf}(ii) yields isomorphisms 
$$
\bA^1 \longrightarrow \Hilb_{h_4}^{\SL_2}\big( V(4) \big),
\quad 
\cW \longrightarrow \Univ_{h_4}^{\SL_2}\big( V(4) \big).
$$
Moreover, the quotient-scheme map is a closed immersion 
$$
\bA^1 \hookrightarrow V(4)/\!/\SL_2 \cong \bA^2.
$$

\medskip

\noindent
(iii) The preceding argument adapts to show that $\Hilb_{h_{\lambda}}^G\big( V(\lambda) \big)$
is either an affine line or a reduced point, with the notation of Example \ref{ex:tuf}(iii).
The quotient-scheme map is again a closed immersion.

\medskip

\noindent
(iv) With the notation of Example \ref{ex:m2r}, we describe the quotient-scheme map $\gamma$;
it takes values in $V/\!/G$ since $h(0) = 1$. 
Observe that the image of $G$ in $\GL\big(V(2)\big) \cong \GL_3$ is the special orthogonal group 
associated with the non-degenerate quadratic form $\Delta$ (the discriminant). By classical invariant 
theory, it follows that the algebra of invariants of $rV(2)$ is generated by the maps
$$
(A_1, \ldots, A_r) \longmapsto \Delta(A_i), \quad \delta(A_i, A_j),  \quad \det(A_i,A_j,A_k),
$$
where $\delta$ denotes the bilinear form associated with $\Delta$. But $\det(A_i,A_j,A_k)$ vanishes
identically on the image of $\gamma$, which may thus be identified with the variety of symmetric
$r \times r$ matrices of rank $\leq 1$. In other words, 
\emph{$\gamma\big( \Hilb^G_h(V) \big)$ is the affine cone over $\bP^{r-1}$ embedded via $\cO_{\bP^{r-1}}(2)$}. 
Moreover, $\gamma$ is a $\GL_r$-equivariant desingularization of that cone, with exceptional locus 
the homogeneous divisor $\Omega_0$. This easily yields an isomorphism of $\GL_r$-varieties
$$
\Hilb^G_h(V) \cong O_{\bP^{r-1}}(-2)
$$
where $O_{\bP^{r-1}}(-2)$ denotes the total space of the line bundle of degree $-2$ over $\bP^{r-1}$,
i.e., the blow-up at the origin of the image of $\gamma$.
\end{examples}

We now apply the construction of the quotient-scheme map to obtain a ``flattening'' of the categorical 
quotient $\pi : X \to X/\!/G$, where \emph{$X$ is an irreducible variety}. Then $X/\!/G$ is an irreducible
variety as well, and there exists a largest open subset $Y_0 \subset Y := X/\!/G$ such that the pull-back 
$$
\pi_0 : X_0 := \pi^{-1}(Y_0) \longrightarrow Y_0
$$
is flat. It follows that the (scheme-theoretic) fibers of $\pi$ at all closed points of its {\bf flat locus} 
$Y_0$ have the same Hilbert function, say $h = h_X$. Since $F/\!/G$ is a (reduced) point for each such 
fiber $F$, we have $h(0) =1$. Thus, the invariant-scheme map associated with this special Hilbert function 
$h$ is just a morphism
$$
\gamma : \Hilb^G_h(X) \longrightarrow X/\!/G.
$$

\begin{proposition}\label{prop:flat}
With the preceding notation and assumptions, the diagram
\begin{equation}\label{eqn:quot}
\CD
\Univ^G_h(X) @>{q}>> X \\
@V{p}VV @V{\pi}VV \\
\Hilb^G_h(X) @>{\gamma}>> X/\!/G \\ 
\endCD
\end{equation}
commutes, where the morphisms from $\Univ^G_h(X)$ are the projections. 
Moreover, the pull-back of $\gamma$ to the flat locus of $\pi$ is an isomorphism.
\end{proposition}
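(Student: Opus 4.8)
The plan is to establish the two assertions separately: commutativity of \eqref{eqn:quot} is a diagram chase, while the statement about the flat locus rests on the observation that $\pi_0 : X_0 \to Y_0$ is \emph{itself} a flat family of closed $G$-subschemes of $X$, classified by a section of $\gamma$. Write $H := \Hilb^G_h(X)$ and let $q$, $p$ be the two projections on $\Univ^G_h(X)$. Since $h(0) = 1$, the sheaf $\cF_0 = (p_* \cO_{\Univ^G_h(X)})^G$ is a quotient $\cO_H$-algebra of $A^G \otimes_k \cO_H$ that is invertible as an $\cO_H$-module, hence canonically isomorphic to $\cO_H$; by construction of the quotient-scheme map, the induced surjection $A^G \otimes_k \cO_H \to \cO_H$ is $\gamma^{\sharp}$, so $\gamma^{\sharp}$ sends $f \in A^G$ to the image of $f \otimes 1$ in $\cF_0 \subset p_* \cO_{\Univ^G_h(X)}$. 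That image is $q^{\sharp}(f)$, and $\pi^{\sharp}(f) = f$ for $f \in A^G$; hence the ring maps $A^G \to \cO\big(\Univ^G_h(X)\big)$ induced by $\pi \circ q$ and by $\gamma \circ p$ coincide, and as $X/\!/G$ is affine this gives $\pi \circ q = \gamma \circ p$.

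For the second assertion, put $Y := X/\!/G$, let $j : Y_0 \hookrightarrow Y$ be the inclusion of the flat locus, $X_0 := \pi^{-1}(Y_0)$, $\pi_0 := \pi|_{X_0}$, and consider the graph $\Gamma_{\pi_0} \subset X \times Y_0$, i.e. the scheme-theoretic equalizer of $\pi \circ \mathrm{pr}_1$ and $j \circ \mathrm{pr}_2$. It is closed since $Y$ is separated, $G$-stable since $\pi$ is $G$-invariant, and isomorphic via $\mathrm{pr}_1$ to $X_0 \xrightarrow{\pi_0} Y_0$, hence flat over $Y_0$; its fibre over a closed point $y$ is $\pi^{-1}(y)$, with $\pi^{-1}(y)/\!/G = \Spec k(y)$ a reduced point, so the family is multiplicity-finite with $h(0) = 1$ and — $Y_0$ being connected — has constant Hilbert function, necessarily $h = h_X$ (Remark \ref{rem:fam}(ii)). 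By the universal property there is a unique $\phi : Y_0 \to H$ with $\phi^* \Univ^G_h(X) = \Gamma_{\pi_0}$. Computing $\Gamma_{\pi_0}/\!/G \subset Y \times Y_0$ — whose sheaf of $G$-invariants is $\cO_{Y_0}$, with structure map $A^G \otimes_k \cO_{Y_0} \to \cO_{Y_0}$ the restriction of functions to $Y_0$ — identifies it with the graph of $j$, so $\gamma \circ \phi = j$. In particular $\phi$ factors as $\phi_0 : Y_0 \to H_0 := \gamma^{-1}(Y_0)$, and $\gamma_0 \circ \phi_0 = \mathrm{id}_{Y_0}$ where $\gamma_0 := \gamma|_{H_0}$.

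It remains to show $\phi_0 \circ \gamma_0 = \mathrm{id}_{H_0}$. Let $\cZ_0 := \Univ^G_h(X)|_{H_0} \subset X \times H_0$ be the restricted universal family, with projections $q_0$, $p_0$; pulling the square of the first paragraph back along $H_0 \hookrightarrow H$ gives $\pi \circ q_0 = \gamma_0 \circ p_0$, so $\cZ_0 \hookrightarrow X \times H_0$ factors through the equalizer of $\pi \circ \mathrm{pr}_1$ and $j \circ \gamma_0 \circ \mathrm{pr}_2$, which is precisely $\gamma_0^* \Gamma_{\pi_0}$. Both $\cZ_0$ and $\gamma_0^* \Gamma_{\pi_0}$ are flat over $H_0$ with Hilbert function $h$ and affine over $H_0$; pushing the closed immersion $\cZ_0 \hookrightarrow \gamma_0^* \Gamma_{\pi_0}$ forward to $H_0$ (exact, as the morphisms are affine) and taking isotypical components (exact, as $G$ is reductive) yields for each $M \in \Irr(G)$ a surjection of locally free $\cO_{H_0}$-modules of the same rank $h(M)$, hence an isomorphism; reassembling via \eqref{eqn:decs} gives $\cZ_0 = \gamma_0^* \Gamma_{\pi_0}$. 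Thus $\cZ_0$ is classified both by the inclusion $H_0 \hookrightarrow H$ and by $\phi \circ \gamma_0$, and uniqueness of the classifying morphism forces $\phi_0 \circ \gamma_0 = \mathrm{id}_{H_0}$; combined with the previous paragraph, $\gamma_0 : \gamma^{-1}(Y_0) \to Y_0$ is an isomorphism. The main obstacle I anticipate is this last ``rigidity'' argument — making precise that a closed subfamily of a flat family with the same Hilbert function is the whole family — together with the bookkeeping that identifies $\gamma_0^* \Gamma_{\pi_0}$ with the relevant equalizer; the identity $\gamma \circ \phi = j$ also warrants care.
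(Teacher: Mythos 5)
Your proof is correct and follows essentially the same route as the paper's: commutativity comes from $h(0)=1$ forcing $\cF_0\cong\cO_H$, and the key step is the identification $\cZ_0 = \gamma_0^*\Gamma_{\pi_0} \cong X_0\times_{Y_0}H_0$ obtained by comparing surjections of locally free sheaves of covariants of equal rank. You additionally make explicit the section $\phi_0$ of $\gamma_0$ and the appeal to uniqueness of classifying morphisms, a step the paper's proof leaves implicit after establishing that $\iota$ is an isomorphism.
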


\begin{proof}
Set for simplicity $\cZ:= \Univ^G_h(X)$ and $S := \Hilb^G_h(X)$. 
Then the natural map $\cO_S \to (p_* \cO_{\cZ})^G$ is an isomorphism,
by the definition of the Hilbert scheme and the assumption that $h(0) = 1$.
Thus, $p$ factors as the quotient morphism $\cZ \to \cZ/\!/G$ followed by an isomorphism 
$\cZ/\!/G \to S$. In view of the definition of $\gamma$, this yields the first assertion.

For the second assertion, let $S_0 := \gamma_0^{-1}(Y_0)$ and 
$\cZ_0 := p^{-1}(S_0)$. By the preceding step, we have a closed immersion
$\iota : \cZ_0 \subset X_0 \times_{Y_0} S_0$ of $G$-schemes. But both are flat
over $S_0$ with the same Hilbert function $h$. Thus, the associated sheaves
of covariants $\cF_M$ (for $\cZ_0$) and $\cG_M$ (for $X_0 \times_{Y_0} S_0$)
are locally free sheaves of $\cO_S$-modules of the same rank, and come with a surjective 
morphism $\iota_M^* : \cG_M \to \cF_M$. It follows that $\iota_M^*$ is an isomorphism, 
and in turn that so is $\iota$.
\end{proof}

This construction is of special interest in the case that $G$ is a finite group, see 
\cite[Section 4]{Be08} and also Subsection \ref{subsec:rcqs}. It also deserves further
study in the setting of connected algebraic groups.

\begin{example}
Let $G$ be a semi-simple algebraic group acting on its Lie algebra $\fg$ via the
adjoint representation. By a classical result of Kostant, the categorical quotient
$\fg/\!/G$ is an affine space of dimension equal to the rank $r$ of $G$ (the dimension of
a maximal torus $T \subset G$). Moreover, the quotient morphism $\pi$ is flat; its fibers
are exactly the orbit closures of regular elements of $\fg$ (i.e., those with centralizer
of minimal dimension $r$), and the corresponding Hilbert function $h = h_{\fg}$ is given by
\begin{equation}\label{eqn:hilb}
h(\lambda) = \dim V(\lambda)^T.
\end{equation}
Thus, the invariant-scheme map yields an isomorphism $\Hilb^G_h(\fg) \cong \fg/\!/G$.

If $G = \SL_2$, then $\fg \cong V(2)$ and we get back that 
$\Hilb^{\SL_2}_{h_2}\big( V(2) \big) \cong \bA^1$. More generally, when applied to $G = \SL_n$, 
we recover a result of Jansou and Ressayre, see \cite[Theorem 2.5]{JR09}. They also show that 
$\Hilb^{\SL_n}_h(\mathfrak{sl}_n)$ is an affine space whenever $h$ is the Hilbert function 
of an \emph{arbitrary} orbit closure, and they explicitly describe the universal family 
in these cases; see [loc.~cit., Theorem 3.6].
\end{example}

\section{Some further developments and applications}
\label{sec:sa}

\subsection{Resolution of certain quotient singularities}
\label{subsec:rcqs}

In this subsection, we assume that the group $G$ is \emph{finite}. We discusss a direct construction
of the invariant Hilbert scheme in that setting, as well as some applications; the reader may consult
the notes \cite{Be08} for further background, details, and developments.

Recall that $\Irr(G)$ denotes the set of isomorphism classes of simple $G$-modules. Since this set is finite, 
functions $h : \Irr(G) \to \bN$ may be identified with isomorphism classes of arbitrary finite-dimensional 
modules, by assigning to $h$ the $G$-module $\bigoplus_{M \in \Irr(G)} h(M) M$. 
For example, the function $M \mapsto \dim(M)$ corresponds to the regular representation, i.e.,
$\cO(G)$ where $G$ acts via left multiplication.

Given such a function $h$ and a $G$-scheme $X$, we may consider the invariant Hilbert functor $Hilb^G_h(X)$ 
as in Subsection \ref{subsec:fam}: it associates with any scheme $S$ the set of closed $G$-stable
subschemes $\cZ \subset X \times S$ such that the projection $p: \cZ \to S$ is flat and the module
of covariants $\Hom^G(M,p_* \cO_{\cZ})$ is locally free of rank $h(M)$ for any $M \in \Irr(G)$.

For such a family, the sheaf $p_* \cO_{\cZ}$ is locally free of rank 
$$
n = n(h) := \sum_{M \in \Irr(G)} h(M) \dim(M),
$$
in view of the isotypical decomposition (\ref{eqn:decs}). In other words, $\cZ$ is finite and flat over $S$
of constant degree $m$, the dimension of the representation associated with $h$. 
If $X$ is \emph{quasi-projective}, 
then the punctual Hilbert scheme $\Hilb_n(X)$ exists and is equipped with an action of $G$ 
(see Proposition \ref{prop:aut}). Thus, we have a morphism $f: S \to \Hilb_n(X)$ which is readily seen to be 
$G$-invariant. In other words, $f$ factors through a morphism to the {\bf fixed point subscheme} 
$\Hilb_n(X)^G \subset \Hilb_n(X)$, i.e., the largest closed $G$-stable subscheme on which $G$ acts trivially. 
Moreover, the pull-back of the universal family $\Univ_n(X)$ to $\Hilb_n^G(X)$ is a finite flat family of 
$G$-stable subschemes of $X$, and has a well-defined Hilbert function on each connected component 
(see Remark \ref{rem:fam}(iii)). 

This easily implies the following version of Theorem \ref{thm:hilb} for finite groups:

\begin{proposition}\label{prop:conn}
With the preceding notation and assumptions, the Hilbert functor $Hilb^G_h(X)$ is represented 
by a union of connected components of the fixed point subscheme $\Hilb_n(X)^G$.
\end{proposition}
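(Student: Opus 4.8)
The plan is to recognize $Hilb^G_h(X)$ as a union of connected components of $\Hilb_n(X)^G$ by assembling the two facts already extracted in the preliminary discussion: that a flat family with Hilbert function $h$ is classified by a $G$-invariant morphism to $\Hilb_n(X)$, and that the pull-back of the universal family to $\Hilb_n(X)^G$ is a finite flat family of $G$-stable subschemes.

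First I would set up the candidate. The $G$-action on $\Hilb_n(X)$ coming from Proposition~\ref{prop:aut} stabilizes $\Univ_n(X) \subset X \times \Hilb_n(X)$ for the action $g \cdot (x,Z) = (g \cdot x, g \cdot Z)$ (if $x$ lies in the subscheme $Z$, then $g \cdot x$ lies in $g \cdot Z$); restricting over the fixed point subscheme, on which $G$ acts trivially, we obtain
$$
\cW := \Univ_n(X) \times_{\Hilb_n(X)} \Hilb_n(X)^G \;\subset\; X \times \Hilb_n(X)^G ,
$$
a family of closed $G$-subschemes in the sense of Definition~\ref{def:fam}, finite and flat of degree $n$ over $\Hilb_n(X)^G$. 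Since the structure morphism $p$ of $\cW$ is finite, each sheaf of covariants $\cF_M = \Hom^G(M, p_*\cO_{\cW})$ is coherent, and since $\cW$ is flat, $\cF_M$ is locally free of finite rank (a multiplicity-finite flat family has locally free sheaves of covariants). Its rank is therefore a locally constant function on $\Hilb_n(X)^G$, which is quasi-projective, hence noetherian; as $\Irr(G)$ is finite, the locus where $M \mapsto \rk \cF_M$ equals $h$ is a finite intersection of open-and-closed subsets, hence open and closed, i.e.\ a union of connected components. Call this union $\Hilb^G_h(X)$ and let $\Univ^G_h(X)$ be the restriction of $\cW$ to it; by construction it is a flat family of closed $G$-subschemes of $X$ with Hilbert function $h$.

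Next I would check the universal property. Let $\cZ \subset X \times S$ be a flat family of closed $G$-subschemes with Hilbert function $h$. By the isotypical decomposition (\ref{eqn:decs}), $p_*\cO_{\cZ}$ is locally free of rank $n$, so $\cZ$ is finite and flat of degree $n$ over $S$ and is classified by a morphism $f : S \to \Hilb_n(X)$; as noted in the discussion above, $f$ is $G$-invariant because $\cZ$ is $G$-stable, hence factors through a morphism $S \to \Hilb_n(X)^G$, still denoted $f$. The pull-back $f^*\cW$ is $\cZ$ again, as a closed subscheme of $X \times S$, by the defining property of $\Univ_n(X)$; consequently the sheaves of covariants of $\cZ$ are the pull-backs $f^*\cF_M$, of ranks $h(M)$, which forces $f$ to land in $\Hilb^G_h(X)$, and then $f^*\Univ^G_h(X) = \cZ$. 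Uniqueness of $f$ with this property follows from uniqueness of the classifying morphism to $\Hilb_n(X)$: any $g : S \to \Hilb^G_h(X)$ with $g^*\Univ^G_h(X) = \cZ$ agrees with $f$ after composition with the inclusion $\Hilb^G_h(X) \hookrightarrow \Hilb_n(X)$, hence $g = f$. This yields the natural bijection representing $Hilb^G_h(X)$.

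The argument is essentially bookkeeping; the only inputs that are not purely formal are the existence of the punctual Hilbert scheme and of its fixed point subscheme with the stated behaviour for the $G$-action (classical, and recalled above together with the reference \cite{Be08}), and the equivalence of flatness with local freeness of the sheaves of covariants for a finite family over a noetherian base. I do not anticipate a genuine obstacle; the one point to handle with some care is that ``Hilbert function equal to $h$'' cuts out a union of connected components rather than merely a closed or a locally closed subscheme — this is exactly where finiteness of $p$, hence coherence and then local freeness of each $\cF_M$, and finiteness of $\Irr(G)$ are used.
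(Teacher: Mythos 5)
Your argument is correct and follows essentially the same route the paper takes (the paper leaves the proof implicit in the preceding discussion): a flat family with Hilbert function $h$ is finite flat of degree $n$, hence classified by a $G$-invariant morphism to $\Hilb_n(X)$ factoring through $\Hilb_n(X)^G$, and the restriction of $\Univ_n(X)$ to the fixed-point subscheme has locally constant Hilbert function, so the locus where it equals $h$ is a union of connected components representing the functor. Your explicit verification of the universal property and of local constancy via local freeness of the sheaves of covariants is just the bookkeeping the paper summarizes as ``this easily implies.''
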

 
Also, the quotient $\pi : X \to X/G$ exists, where the underlying topological space to $X/G$ is just 
the orbit space, and the structure sheaf $\cO_{X/G}$ equals $(\pi_* \cO_X)^G$. Moreover, the set-theoretical
fibers of $\pi$ are exactly the $G$-orbits. As in Subsection \ref{subsec:qsm}, this yields a 
quotient-scheme map in this setting,
$$
\gamma : \Hilb^G_h(X) \longrightarrow \Hilb_n(X/G).
$$
(In fact, the assignement $\cZ \mapsto \cZ/\!/G$ yields a morphism from $\Hilb_n^G(X)$ to the disjoint union 
of the punctual Hilbert schemes $\Hilb_m(X/G)$ for $m \leq n$).

We now assume that \emph{$X$ is an irreducible variety on which $G$ acts faithfully}. Then $X$ contains
$G$-{\bf regular points}, i.e., points with trivial isotropy groups, and they form an open $G$-stable
subset $X_{\reg}\subset X$. Moreover, the (scheme-theoretic) fiber of $\pi$ at a given $x \in X(k)$ equals
the orbit $G \cdot x$ if and only if $x$ is $G$-regular. In other words, the {\bf regular locus} $X_{\reg}/G$ 
is the largest open subset of $X/G$ over which $\pi$ induces a Galois covering with group $G$; 
it is contained in the flat locus. Thus, the Hilbert function $h_X$ associated with the general fibers 
of $\pi$ (as in Subsection \ref{subsec:qsm}) is just that of the regular representation.
The corresponding invariant Hilbert scheme is called the {\bf $G$-Hilbert scheme} and denoted by 
$G$-$\Hilb(X)$. It is a union of connected components of $\Hilb_n(X)^G$, where $n$ is the order of $G$, 
and comes with a projective morphism
\begin{equation}\label{eqn:hcf}
\gamma : G-\Hilb(X) \longrightarrow X/G
\end{equation}
which induces an isomorphism above the regular locus $X_{\reg}/G$. Moreover, $\gamma$ fits
into a commutative square
$$
\CD
G-\Univ(X) @>{q}>> X \\
@V{p}VV @V{\pi}VV \\
G-\Hilb(X) @>{\gamma}>> X/G \\
\endCD
$$
by Proposition \ref{prop:flat}. In other words, $G$-$\Univ(X)$ is a closed $G$-stable subscheme 
of the fibered product $X \times_{X/G} G-\Hilb(X)$.

We denote by $G$-$\cH_X$ the closure of $\gamma^{-1}(X_{\reg}/G)$ in $G$-$\Hilb(X)$, equipped with the 
reduced subscheme structure.. This is an irreducible component of $G$-$\Hilb(X)$: the 
{\bf main component}, also called the {\bf orbit component}. The points of $G$-$\cH_X$ are the regular 
$G$-orbits and their flat limits as closed subschemes of $X$; also, the quotient-scheme map 
restricts to a projective \emph{birational} morphism $G$-$\cH_X \to X/G$.

\begin{examples}
(i) If $X$ and $X/G$ are smooth, then \emph{the quotient-scheme map (\ref{eqn:hcf}) is an isomorphism}.
(Indeed, $\pi$ is flat in that case, and the assertion follows from Proposition \ref{prop:flat}).

In particular, if $V$ is a finite-dimensional vector space and $G \subset \GL(V)$ a finite
subgroup generated by pseudo-reflections, then $\cO(V)^G$ is a polynomial algebra by a theorem
of Chevalley and Shepherd-Todd. Thus, (\ref{eqn:hcf}) is an isomorphism under that assumption.

\medskip

\noindent
(ii) If $X$ is a smooth surface, then every punctual Hilbert scheme $\Hilb_n(X)$ is smooth.
Since smoothness is preserved by taking fixed points under finite (or, more generally, reductive) 
group actions, it follows that \emph{$\Hilb^G_h(X)$ is smooth for any function $h$.}
Thus, the quotient-scheme map $\gamma : G-\cH_X \to X/G$ is a resolution of singularities.

In particular, if $G$ is a finite subgroup of $\GL_2$ which is not generated by pseudo-reflections, 
then the quotient $\bA^2/G$ is a normal surface with singular locus (the image of) the origin, and
$\gamma : G-\cH_{\bA^2} \to \bA^2/G$ is a canonical desingularization. If in addition $G \subset \SL_2$, 
then $G$ contains no pseudo-reflection, and the resulting singularity is a rational double point.
In that case, $\gamma$ yields the \emph{minimal} desingularization (this result is due to Ito and
Nakamura, see \cite{IN96, IN99}; a self-contained proof is provided in \cite[Section 5]{Be08}).

\medskip

\noindent
(iii) The preceding argument does not extend to smooth varieties $X$ of dimension $\geq 3$,
since $\Hilb_n(X)$ is generally singular in that setting. Yet it was shown by Bridgeland, King and Reid
via homological methods that \emph{$G$-$\Hilb(X)$ is irreducible and has trivial canonical sheaf, if $\dim(X) \leq 3$ 
and the canonical sheaf of $X$ is equivariantly trivial} (see \cite[Theorem 1.2]{BKR01}). As a consequence,
if $G \subset \SL_n$ with $n \leq 3$, then $\gamma : G-\Hilb(\bA^n) \to \bA^n/G$ is a crepant resolution;
in particular, $G -\Hilb(\bA^n)$ is irreducible. This result fails in dimension 4 as the $G$-Hilbert scheme 
may be reducible; this is the case for the binary tetrahedral group $G \subset \SL_2$ acting on $k^4$
via the direct sum of two copies of its natural representation, see \cite{LS08}.

\end{examples}

\subsection{The horospherical family}
\label{subsec:thf}
 
In this subsection, $G$ denotes a \emph{connected} reductive group. We present a classical algebraic
construction that associates with each affine $G$-scheme $Z$, a ``simpler'' affine $G$-scheme $Z_0$ 
called its horospherical degeneration (see \cite{Po86} or \cite[\S 15]{Gr97}). Then we interpret this 
construction in terms of families, after \cite{AB05}.

We freely use the notation and conventions from highest weight theory (Subsection \ref{subsec:rg}) 
and denote by $\alpha_1, \ldots, \alpha_r$ the simple roots of $(G,T)$ associated with the Borel 
subgroup $B$ (i.e., the corresponding positive roots are those of $(B,T)$).

Given a $G$-algebra $A$, recall the isotypical decomposition
\begin{equation}\label{eqn:idec}
A = \bigoplus_{\lambda \in \Lambda^+} A_{(\lambda)} \quad \text{where} \quad
A_{(\lambda)} \cong A^U_{\lambda} \otimes_k V(\lambda).
\end{equation}
Also recall that when $G$ is a torus, $A_{(\lambda)}$ is just the weight space $A_{\lambda}$
and (\ref{eqn:idec}) is a \emph{grading} of $A$, i.e., 
$A_{\lambda} \cdot A_{\mu} \subset A_{\lambda + \mu}$ for all $\lambda$, $\mu$. 
For an arbitrary group $G$, (\ref{eqn:idec}) is no longer a grading, but gives rise to a \emph{filtration} 
of $A$. To see this, we study the multiplicative properties of the isotypical decomposition.

Given $\lambda,\mu \in \Lambda^+$, there is an isomorphism of $G$-modules
\begin{equation}\label{eqn:tens}
V(\lambda) \otimes_k V(\mu) \cong \bigoplus_{\nu \in \Lambda^+} c_{\lambda,\mu}^{\nu} V(\nu)
\end{equation}
where the $c_{\lambda,\mu}^{\nu}$'s are non-negative integers, called the 
{\bf Littlewood-Richardson coefficients}.
Moreover, if $c_{\lambda,\mu}^{\nu} \neq 0$ then $\nu \leq \lambda + \mu$ where $\leq$ is the
partial ordering on $\Lambda$ defined by: 
$$
\mu \leq \lambda \Leftrightarrow \lambda - \mu = \sum_{i=1}^r n_i \alpha_i
\text{ for some non-negative integers } n_1,\ldots, n_r.
$$
Finally, $c_{\lambda,\mu}^{\lambda + \mu} = 1$, i.e., the simple module with the largest weight 
$\lambda + \mu$ occurs in the tensor product $V(\lambda) \otimes_k V(\mu)$ with multiplicity $1$. 
This special component is called the {\bf Cartan component} of $V(\lambda) \otimes_k V(\mu)$.

We set
$$
A_{(\leq \lambda)} := \bigoplus_{\mu \in \Lambda^+, \, \mu \leq \lambda} A_{(\mu)}.
$$
In view of (\ref{eqn:tens}), we have
\begin{equation}\label{eqn:mult}
A_{(\leq \lambda)} \cdot A_{(\leq \mu)} \subset A_{(\leq \lambda + \mu)}
\end{equation}
for all dominant weights $\lambda,\mu$. In other words, the $G$-submodules $A_{(\leq \lambda)}$ form 
an increasing filtration of the $G$-algebra $A$, indexed by the partially ordered group $\Lambda$.
The associated graded algebra $\gr(A)$ is a $G$-algebra, isomorphic to $A$ as a $G$-module 
but where the product of any two isotypical components $A_{(\lambda)}$ and $A_{(\mu)}$ is
obtained from their product in $A$ by projecting on the Cartan component $A_{(\lambda + \mu)}$.
Thus, the product of any two simple submodules of $\gr(A)$ is either their Cartan product, 
or zero. Also, note that $\gr(A)^U \cong A^U$ as $T$-algebras, since 
$A^U_{(\lambda)} = A^U_{\lambda}$ for all $\lambda$.

Now consider the {\bf Rees algebra} associated to this filtration:
\begin{equation}\label{eqn:rees}
\cR(A) := \bigoplus_{\mu \in \Lambda} A_{(\leq \mu)} \, e^{\mu}
= \bigoplus_{\lambda \in \Lambda^+, \, \mu \in \Lambda, \, \lambda \leq \mu} A_{(\lambda)}\, e^{\mu}
\end{equation}
where $e^{\mu}$ denotes the character $\mu$ viewed as a regular function on $T$
(so that $e^{\mu} e^{\nu} = e^{\mu + \nu}$ for all $\mu$ and $\nu$). Thus, $\cR(A)$ is a subspace of 
$$
A \otimes_k \cO(T) = \bigoplus_{\lambda \in \Lambda, \, \mu \in \Lambda} A_{(\lambda)}\, e^{\mu}.
$$
In fact, $A \otimes_k \cO(T)$ is a $G \times T$-algebra, and $\cR(A)$ is a 
$G \times T$-subalgebra by the multiplicative property (\ref{eqn:mult}). Also, note that
$\cR(A)$ contains variables
\begin{equation}\label{eqn:var}
t_1:= e^{\alpha_1}, \ldots, t_r := e^{\alpha_r}
\end{equation}
associated with the simple roots; the monomials in these variables are just
the $e^{\mu -\lambda}$ where $\lambda \leq \mu$. By (\ref{eqn:rees}), we have
$$
\cR(A) \cong \bigoplus_{\lambda \in \Lambda^+} A_{(\lambda)} \, e^{\lambda} \otimes_k k[t_1,\ldots,t_r]
\cong A[t_1,\ldots,t_r]
$$
as $G$-$k[t_1,\ldots,t_r]$-modules. In particular, $\cR(A)$ is a free module over the polynomial ring 
$k[t_1,\ldots,t_r] \subset \cO(T)$. Moreover, we have an isomorphism of $T$-$k[t_1,\ldots,t_r]$-algebras
$$
\cR(A)^U \cong A^U[t_1,\ldots,t_r]
$$
that maps each $f \in \cR(A)^U_{\lambda}$ to $f e^{\lambda}$. Also, the ideal 
$(t_1,\ldots,t_r) \subset \cR(A)$ is $G$-stable, and the quotient by that ideal is just
the $G$-module $A$ where the product of any two components $A_{(\lambda)}$, $A_{(\mu)}$ is 
the same as in $\gr(A)$.  In other words,
$$
\cR(A)/(t_1,\ldots,t_r) \cong \gr(A).
$$ 
On the other hand, when inverting $t_1,\ldots,t_r$, we obtain
$$
\cR(A)[t_1^{-1},\ldots,t_r^{-1}] \cong \bigoplus A_{(\lambda)} \, e^{\mu}
$$
where the sum is over those $\lambda \in \Lambda^+$ and $\mu \in \Lambda$ such that
$\lambda - \mu = n_1 \alpha_1 + \cdots + n_r \alpha_r$ for some integers 
$n_1,\ldots n_r$ (of arbitrary signs). In other words, $\lambda - \mu$ is in
the {\bf root lattice}, i.e., the sublattice of the weight lattice $\Lambda$ generated
by the roots. The torus associated with the root lattice is the {\bf adjoint torus}
$T_{\ad}$, isomorphic to $(\bG_m)^r$ via the homomorphism
$$
\ualpha : T \longrightarrow (\bG_m)^r, \quad 
t \longmapsto \big( \alpha_1(t), \ldots, \alpha_r(t) \big).
$$
Moreover, the kernel of $\ualpha$ is the center of $G$, that we denote by
$Z(G)$. This identifies $T_{\ad}$ with $T/Z(G)$, a maximal torus of the adjoint group 
$G/Z(G)$ (whence the name). Now $Z(G)$ acts on $A \otimes_k \cO(T)$ via its action on $A$ 
as a subgroup of $G$ (then each isotypical component $A_{(\lambda)}$ is an eigenspace 
of weight $\lambda_{\vert Z(G)}$) and its action on $\cO(T)$ as a subgroup of $T$ 
(then each $e^{\mu}$ is an eigenspace of weight $-\mu$). Moreover, the invariant ring satisfies
\begin{equation}\label{eqn:loc}
\big( A \otimes_k \cO(T) \big)^{Z(G)} \cong \cR(A)[t_1^{-1},\ldots,t_r^{-1}]
\end{equation}
as $G \times T$-algebras over $\cO(T)^{Z(G)}\cong k[t_1,t_1^{-1},\ldots,t_r,t_r^{-1}]$.

Translating these algebraic constructions into geometric terms yields the following statement:

\begin{proposition}\label{prop:hor}
Let $Z = \Spec(A)$ be an affine $G$-scheme and $p: \cZ \to \bA^r$ the morphism
associated with the inclusion $k[t_1,\ldots,t_r] \subset \cR(A)$, where $\cR(A)$ denotes
the Rees algebra (\ref{eqn:rees}), and $t_1,\ldots,t_r$ the variables (\ref{eqn:var}).
 
Then $p$ is a flat family of affine $G$-schemes, and the induced family of $T$-schemes 
$\cZ/\!/U \to \bA^r$ is trivial with fiber $Z/\!/U$. The fiber of $p$ at $0$ is 
$\Spec\big( \gr(A) \big)$.

Moreover, the pull-back of $p$ to $T_{\ad}\subset \bA^r$ is isomorphic to the projection
$Z \times^{Z(G)} T \to T/Z(G) = T_{\ad}$. In particular, all fibers of $p$ at general points 
of $\bA^r$ are isomorphic to $Z$.
\end{proposition}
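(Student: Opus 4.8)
The plan is to read each of the four assertions off the ring-theoretic identities already assembled in the preceding discussion; the only work is bookkeeping. Write $R := k[t_1,\dots,t_r] \subset \cR(A)$, so that $\cZ = \Spec\,\cR(A)$ and $p$ is the morphism dual to the inclusion $R \hookrightarrow \cR(A)$.

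First I would check that $p$ is a flat family of affine $G$-schemes. The $G\times T$-action on $\cR(A)$ restricts, through the first factor, to a $G$-action making $\cZ$ an affine $G$-scheme, and $p$ is $G$-invariant because $R$ lies in $\cR(A)^G$: computing invariants componentwise in $\cR(A) = \bigoplus_{\mu}A_{(\le\mu)}e^{\mu}$ and using that $V(\nu)^G\neq 0 \iff \nu = 0$ gives $\cR(A)^G = \bigoplus_{\mu\geq 0}A^G e^{\mu} = A^G[t_1,\dots,t_r] \supseteq R$, so every fibre of $p$ is an affine $G$-scheme. Flatness is then immediate from the displayed isomorphism $\cR(A) \cong \bigl(\bigoplus_{\lambda\in\Lambda^+}A_{(\lambda)}e^{\lambda}\bigr)\otimes_k R \cong A\otimes_k R$ of $G$-$R$-modules, which exhibits $\cR(A)$ as a free, hence flat, $R$-module.

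Next the three remaining assertions. For $\cZ/\!/U$, I would note that this family over $\bA^r$ is, by the relative version of the $U$-quotient (as in Remark \ref{rem:fam}(iii)), the morphism $\Spec\,\cR(A)^U \to \Spec R$ dual to $R\hookrightarrow\cR(A)^U$; the established isomorphism of $T$-$R$-algebras $\cR(A)^U \cong A^U[t_1,\dots,t_r] = \cO(Z/\!/U)\otimes_k R$ then identifies it with the trivial projection $(Z/\!/U)\times\bA^r \to \bA^r$. For the special fibre, the scheme-theoretic fibre of $p$ over $0$ is $\Spec\bigl(\cR(A)/(t_1,\dots,t_r)\cR(A)\bigr)$, and $\cR(A)/(t_1,\dots,t_r) \cong \gr(A)$ as $G$-algebras by the identity already recorded, so $\cZ_0 \cong \Spec\bigl(\gr(A)\bigr)$. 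Finally, identify $T_{\ad}$ with the open subscheme $\Spec\,R[t_1^{-1},\dots,t_r^{-1}] = (\bG_m)^r \subset \bA^r$ via $\ualpha$ (legitimate since $t_i = e^{\alpha_i}$ is $Z(G)$-invariant and $\cO(T)^{Z(G)} = k[t_1^{\pm 1},\dots,t_r^{\pm 1}]$). The pull-back of $p$ to $T_{\ad}$ is $\Spec\,\cR(A)[t_1^{-1},\dots,t_r^{-1}] \to T_{\ad}$, and the isomorphism $\cR(A)[t_1^{-1},\dots,t_r^{-1}] \cong \bigl(A\otimes_k\cO(T)\bigr)^{Z(G)}$ of $G\times T$-algebras over $\cO(T)^{Z(G)}$ — equation (\ref{eqn:loc}) — identifies it with the quotient morphism $Z\times^{Z(G)}T = \Spec\bigl((A\otimes_k\cO(T))^{Z(G)}\bigr) \to T/Z(G) = T_{\ad}$ induced by the second projection $Z\times T\to T$. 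Its fibre over the base point consists of the classes $[x,e]$, $x\in Z$, hence is a copy of $Z$; and since right translation of $T$ on the $T$-factor descends to a $T$-action on $Z\times^{Z(G)}T$ lying over the transitive translation action on $T_{\ad}$, every fibre over a closed point of $T_{\ad}$ is isomorphic to $Z$, which — $T_{\ad}$ being dense in $\bA^r$ — is the last assertion.

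I do not expect a genuine obstacle, since the substantial algebra (freeness of $\cR(A)$ over $R$, the computation of $\cR(A)^U$, the isomorphism $\cR(A)/(t_1,\dots,t_r)\cong\gr(A)$, and the localization identity (\ref{eqn:loc})) is already in place. The points needing care are purely formal: verifying $R\subseteq\cR(A)^G$ so the fibres genuinely carry $G$-actions; making precise, via Remark \ref{rem:fam}(iii), that the family $\cZ/\!/U\to\bA^r$ is computed by the ring $\cR(A)^U$; and keeping the sign convention straight under which $Z(G) = \ker\ualpha$ acts on $\cO(T)$, so that $\cO(T)^{Z(G)} = k[t_1^{\pm 1},\dots,t_r^{\pm 1}]$ and $T_{\ad}\cong(\bG_m)^r$. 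If one wishes $\cZ$ to be a family in the literal sense of Definition \ref{def:fam}, one should add that $\cR(A)$ is finitely generated as an $R$-algebra — which follows from Lemma \ref{lem:finU} together with $\cR(A)^U\cong A^U[t_1,\dots,t_r]$ — and pick a $G$-submodule generating it; this is not needed for the assertions as stated.
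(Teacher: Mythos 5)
Your proposal is correct and is essentially the paper's own argument: the paper offers no separate proof, stating only that the proposition is obtained by ``translating these algebraic constructions into geometric terms,'' and your write-up carries out exactly that translation (flatness from the freeness $\cR(A)\cong A[t_1,\ldots,t_r]$ over $k[t_1,\ldots,t_r]$, triviality of $\cZ/\!/U$ from $\cR(A)^U\cong A^U[t_1,\ldots,t_r]$, the special fibre from $\cR(A)/(t_1,\ldots,t_r)\cong\gr(A)$, and the generic fibres from (\ref{eqn:loc})). The added verification that $k[t_1,\ldots,t_r]\subseteq\cR(A)^G$ is a worthwhile detail the paper leaves implicit.
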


This \emph{special fiber} $Z_0$ is an affine $G$-scheme such that the isotypical decomposition
of $\cO(Z_0)$ is a grading; such schemes are called {\bf horospherical}, and 
$$
Z_0:= \Spec\big( \gr(A) \big)
$$ 
is called the {\bf horospherical degeneration} of the affine $G$-scheme $Z$.
We say that $p: \cZ \to \bA^r$ is the {\bf horospherical family} (this terminology
originates in hyperbolic geometry; note that horospherical varieties need not be
spherical).   

For example, the cone of highest weight vectors 
$Z = \overline{G \cdot v_{\lambda}} = G \cdot v_{\lambda} \cup \{0 \}$ 
is a horospherical $G$-variety, in view of the isomorphism (\ref{eqn:hwc}). 
In that case, the fixed point subscheme $Z^U$ is the highest weight line 
$kv_{\lambda} = V(\lambda)^U$, and thus $Z = G \cdot Z^U$. In fact, the latter property 
characterizes horospherical $G$-schemes:

\begin{proposition}\label{prop:char}
An affine $G$-scheme $Z$ is horospherical if and only if $Z = G \cdot Z^U$
(as schemes).
\end{proposition}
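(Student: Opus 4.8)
The plan is to run both implications through the induced scheme $G\times^B Z^U$ together with the canonical morphism $\alpha\colon G\times^B Z^U\to Z$. Since $B$ normalizes $U$, the fixed-point subscheme $Z^U$ is $B$-stable, so $(g,z)\mapsto g\cdot z$ descends to $\alpha$, and I will read ``$Z=G\cdot Z^U$ as schemes'' as the statement that $\alpha$ has scheme-theoretic image $Z$, i.e. that the comorphism $\alpha^{\sharp}\colon A\to\cO(G\times^B Z^U)=\Ind_B^G\cO(Z^U)$ is injective, where $A=\cO(Z)$. Two facts will be used throughout: $\cO(Z^U)$ is a $B$-algebra with trivial $U$-action, hence $T$-graded; and, by Frobenius reciprocity, $V(\nu)$ has multiplicity $\dim\Hom^B\!\big(V(\nu),\cO(Z^U)\big)=\dim\cO(Z^U)_{-\nu^*}$ in $\Ind_B^G\cO(Z^U)$, because a $B$-morphism out of $V(\nu)$ kills $\mathfrak{u}V(\nu)$ and hence factors through the one-dimensional quotient $V(\nu)/\mathfrak{u}V(\nu)$, which has weight $-\nu^*$.

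For ``$Z$ horospherical $\Rightarrow Z=G\cdot Z^U$'': write $A=\bigoplus_\lambda A_{(\lambda)}$ with $A_{(\lambda)}\cong A^U_\lambda\otimes V(\lambda)$ and $A_{(\lambda)}A_{(\mu)}\subset A_{(\lambda+\mu)}$. First I would compute $\cO(Z^U)$. In characteristic zero the ideal of $Z^U$ in $A$ is the ideal generated by the Lie-algebra action $\mathfrak{u}A$; since the multiplication $A_{(\mu)}\otimes A_{(\lambda)}\to A_{(\lambda+\mu)}$ factors through the Cartan-component projection $V(\mu)\otimes V(\lambda)\twoheadrightarrow V(\lambda+\mu)$, which carries $V(\mu)\otimes\mathfrak{u}V(\lambda)$ into $\mathfrak{u}V(\lambda+\mu)$ by a weight argument, the $k$-span $\mathfrak{u}A=\bigoplus_\lambda A^U_\lambda\otimes\mathfrak{u}V(\lambda)$ is already an ideal, hence is exactly the ideal of $Z^U$. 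Thus $\cO(Z^U)\cong\bigoplus_\lambda A^U_\lambda\otimes\big(V(\lambda)/\mathfrak{u}V(\lambda)\big)$, so $V(\nu)$ has multiplicity $\dim A^U_\nu$ in $\cO(G\times^B Z^U)$; in particular $\cO(G\times^B Z^U)\cong A$ as $G$-modules. Now the kernel of $\alpha^{\sharp}$ equals the largest $G$-submodule of $A$ contained in $\ker\big(A\to\cO(Z^U)\big)=\bigoplus_\lambda A^U_\lambda\otimes\mathfrak{u}V(\lambda)$; as $\mathfrak{u}V(\lambda)\subsetneq V(\lambda)$ for every $\lambda$, that ideal contains no nonzero $G$-submodule, so $\ker\alpha^{\sharp}=0$ and $Z=G\cdot Z^U$.

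For the converse, the crucial point is that $\cO(G\times^B Z^U)$ is horospherical for \emph{any} such $Z^U$. Writing $\cO(Z^U)=\bigoplus_\chi C_\chi$ and $\cO(G\times^B Z^U)=\big(\cO(G)\otimes\cO(Z^U)\big)^B$, the Peter--Weyl decomposition $\cO(G)\cong\bigoplus_\nu V(\nu)\otimes V(\nu)^*$ (with $G$ on the left factor and $B$ on the right) realizes the $V(\nu)$-isotypic part of $\cO(G\times^B Z^U)$ as matrix coefficients whose $V(\nu)^*$-entry is a highest weight vector, twisted by $C_{-\nu^*}$. The product of two such is the matrix coefficient of $V(\nu)^*\otimes V(\mu)^*$ at the product of the two highest weight vectors; that product is the highest weight vector of $V(\nu)^*\otimes V(\mu)^*$, hence lies in the Cartan component $V\!\big((\nu+\mu)^*\big)$, and a functional there annihilates every isotypic summand of $V(\nu)\otimes V(\mu)$ other than $V(\nu+\mu)$, so the product lands in the $V(\nu+\mu)$-isotypic part (twisted by $C_{-\nu^*}C_{-\mu^*}\subset C_{-(\nu+\mu)^*}$). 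Thus the isotypical decomposition of $\cO(G\times^B Z^U)$ is a grading. If now $Z=G\cdot Z^U$, then $A$ embeds as a $G$-subalgebra of $\cO(G\times^B Z^U)$; since the isotypic components of a $G$-submodule are its intersections with those of the ambient module, a $G$-subalgebra of a horospherical $G$-algebra is again horospherical, and we are done.

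The hard part should be the converse, more precisely the matrix-coefficient bookkeeping: fixing the left/right translation conventions on $\cO(G)$, identifying $(V(\nu)^*)^U$ together with its weight, and checking that pairing against the Cartan component of $V(\nu)^*\otimes V(\mu)^*$ kills all isotypic summands of $V(\nu)\otimes V(\mu)$ except $V(\nu+\mu)$. Should that route become cumbersome, a fallback is to embed $Z^U$ $T$-equivariantly in a $T$-module and realize $G\times^B Z^U$ as a closed subscheme of a product of cones of highest weight vectors, whose horosphericity is already on record, and then again use that horosphericity passes to $G$-subalgebras. A secondary technical input, needed only in the first implication, is the characteristic-zero identification of the ideal of the fixed-point scheme $Z^U$ with the ideal generated by $\mathfrak{u}A$.
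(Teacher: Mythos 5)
Your proof is correct and follows essentially the same route as the paper's: in the forward direction you show that the ideal of $Z^U$ --- which you identify with $\mathfrak{u}A=\bigoplus_\lambda A^U_\lambda\otimes\mathfrak{u}V(\lambda)$, i.e.\ the paper's $\sum_\lambda V(\lambda)_+$ --- is already an ideal by the Cartan-component weight argument and contains no nonzero simple $G$-submodule, and in the converse you embed $\cO(Z)$ into the algebra induced from $\cO(Z^U)$ (the paper's $\cO(G)^U\otimes_k\cO(Z^U)$, your $\Ind_B^G\cO(Z^U)$) and check that its isotypical decomposition is a grading. The only differences are cosmetic: you work with the Lie algebra $\mathfrak{u}$ in place of the group translates $g\cdot f-f$ (equivalent in characteristic zero), and you verify the grading property of $\cO(G)^U$ by an explicit matrix-coefficient computation where the paper cites it from the literature.
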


\begin{proof}
First, note that the closed subscheme $Z^U \subset Z$ is stable under the Borel
subgroup $B$; it follows that $G \cdot Z^U$ is closed in $Z$ for an arbitrary 
$G$-scheme $Z$. Indeed, the morphism 
\begin{equation}\label{eqn:mor}
\varphi :  G \times Z^U \longrightarrow Z, \quad (g,z) \longmapsto g \cdot z
\end{equation}
factors as the morphism
$$
\psi : G \times Z^U \longrightarrow G/B \times Z, \quad 
(g,z) \longmapsto (gB, g \cdot z)
$$ 
followed by the projection $G/B \times Z \to Z$. The latter morphism is proper, 
since $G/B$ is complete; moreover, $\psi$ is easily seen to be a closed immersion.

Also, note that the ideal of $G \cdot Z^U$ in $A = \cO(Z)$ is the intersection of
the $G$-translates of the ideal $I$ of $Z^U$. Thus, $Z = G \cdot Z^U$ if and only
if $I$ contains no non-zero simple $G$-submodule of $A$. Moreover, the ideal $I$ is generated 
by the $g \cdot f - f$ where $g \in U$ and $f \in A$.

We now assume that $Z$ is horospherical. Consider a simple $G$-submodule $V(\lambda) \subset A$.
Then $V(\lambda)$ admits a unique lowest weight (with respect to the partial ordering $\leq$), 
equal to $-\lambda^*$, and the corresponding eigenspace is a line. Moreover, the span of the 
$g \cdot v - v$, where $g \in U$ and $v \in V(\lambda)$, is just the sum of all other 
$T$-eigenspaces; we denote that span by $V(\lambda)_+$. 
Since the product $V(\lambda)\cdot V(\mu)$, where $V(\mu)$ is some other simple submodule of $A$, 
is either $0$ or the Cartan product $V(\lambda + \mu)$, we see that  
$$
V(\lambda)_+ \cdot V(\mu) \subset V(\lambda + \mu)_+.
$$
Thus, the sum of the $V(\lambda)_+$ over all simple submodules is an ideal of $A$, 
and hence equals $I$. In particular, $I$ contains no non-zero simple $G$-submodule of $A$.

Conversely, assume that $Z = G \cdot Z^U$, i.e., the morphism (\ref{eqn:mor}) is 
surjective. Note that $\varphi$ is invariant under the action of $U$ via 
$u \cdot (g,z) = (gu^{-1}, z)$, and also equivariant for the action of $G$ on
$G \times Z^U$ via left multiplication on $G$, and for the given action on $Z$. 
Thus, $\varphi$ yields an inclusion of $G$-algebras
$$
\cO(Z) \hookrightarrow \cO(G)^U \otimes_k \cO(Z^U)
$$
where $G$ acts on the right-hand side through its action on $\cO(G)^U$ via left
multiplication. But $\cO(G)^U$ is also a $T$-algebra via right multiplication;
this action commutes with that of $G$, and we have an isomorphism of $G \times T$-modules
$$
\cO(G)^U \cong \bigoplus_{\lambda \in \Lambda^+} V(\lambda)^*
$$
where $G$ acts naturally on each $V(\lambda)^*$, and $T$ acts via its character $\lambda$
(see e.g. \cite[Section 2.1]{Bri10}). 
In particular, the isotypical decomposition of $\cO(G)^U$ is a grading; thus, the
same holds for $\cO(G)^U \otimes_k \cO(Z^U)$ and for $\cO(Z)$.
\end{proof}

Next, we relate the preceding constructions to the invariant Hilbert scheme of a finite-dimensional 
$G$-module $V$. Here it should be noted that the full horospherical family of a closed $G$-subscheme 
$Z \subset V$ need not be a family of closed $G$-subschemes of $V \times \bA^r$ (see e.g. Example
\ref{ex:hor}). Yet \emph{the pull-back of the horospherical family to $T_{\ad}$ may be identified 
with a family of closed $G$-subschemes of $V \times T_{\ad}$} as follows.

By \ref{eqn:loc}, we have an isomorphism of $G \times T$-algebras 
$$ 
\cO(V \times T)^{Z(G)} \cong \cR \big( \cO(V) \big)[t_1^{-1},\ldots,t_r^{-1}]
$$
over $\cO(T_{\ad}) \cong k[t_1^{\pm 1},\ldots,t_r^{\pm 1}]$. Also, each isotypical component
$V_{(\lambda)}= m_{\lambda} V(\lambda)$ yields a subspace 
$$
m_{\lambda} V(\lambda)^* \, e^{\lambda^*} \subset \cO(V \times T)^{Z(G)},
$$
stable under the action of $G \times T$. Moreover, all these subspaces generate 
a $G \times T$-subalgebra of $\cO(V \times T)^{Z(G)}$, equivariantly isomorphic to $\cO(V)$
where $V$ is a $G$-module via the given action, and $T$ acts linearly on $V$ so that each 
$V_{(\lambda)}$ is an eigenspace of weight $-\lambda^*$. Finally, we have an isomorphism
$$
\cO(V \times T)^{Z(G)} \cong \cO(V) \otimes_k \cO(T_{\ad})
$$
over $G \times T$-algebras over $\cO(T_{\ad})$, which translates into an isomorphism
$$
p^{-1}(T_{\ad}) \cong V \times T_{\ad}
$$
of families of $G$-schemes  over $T_{\ad}$. (In geometric terms, we have trivialized
the homogeneous vector bundle $V \times^{Z(G)} T \to T/Z(G)$ by extending the $Z(G)$-action
on $V$ to a $T$-action commuting with that of $G$). 

This construction extends readily to the setting of families, i.e., given a family 
of closed $G$-subschemes $\cZ \subset V \times S$, we obtain a family of closed $G$-subschemes
$\cW \subset V \times T_{\ad} \times S$. By arguing as in the proof of Proposition \ref{prop:aut},
this defines an \emph{action of $T_{\ad}$ on the invariant Hilbert scheme $\Hilb^G_h(V)$}.

In fact, \emph{this action arises from the linear $T$-action on $V$ for which each $V_{(\lambda)}$ 
has weight $-\lambda^*$}: since $\lambda + \lambda^*$ is in the root lattice for any 
$\lambda \in \Lambda$, the induced action of the center $Z(G) \subset T$ coincides with its action 
as a subgroup of $G$, so that $Z(G)$ acts trivially on $\Hilb^G_h(V)$. 

\begin{example}\label{ex:hor}
Let $G = \SL_2$ and $Z = G \cdot xy \subset V(2)$. Then $Z = (\Delta = 1)$ is a closed $G$-subvariety 
of $V(2)$ with Hilbert function $h_2$. One checks that the $G$-submodules $\cO(Z)_{\leq 2n}$ 
are just the restrictions to $Z$ of the spaces of polynomial functions on $V(2)$ with degree $\leq n$. 
Moreover, $Z_0 = \overline{G \cdot y^2}$ and the horospherical family is that of Example
\ref{ex:fam}(ii).

Likewise, if $Z = G \cdot x^2y^2 \subset V(4)$ then $Z_0 = \overline{G \cdot y^4}$ and the 
horospherical family is again that of Example \ref{ex:fam}(ii).

Also, $V(1)$ is its own horospherical degeneration, but the horospherical degeneration of $V(2)$ 
is the singular hypersurface $\{(z,t) \in V(2) \oplus V(0) ~\vert~ \Delta(z) = 0\}$.
\end{example}

\subsection{Moduli of multiplicity-free varieties with prescribed weight monoid}
\label{subsec:mav}

In this subsection, we still consider a connected reductive group $G$, and fix a finitely generated 
submonoid $\Gamma \subset \Lambda^+$. We will construct a moduli space for irreducible multiplicity-free
$G$-varieties $Z$ with weight monoid $\Gamma$ or equivalently, with Hilbert function $h = h_{\Gamma}$ 
(\ref{eqn:hm}). Recall that $Z/\!/U$ is an irreducible multiplicity-free $T$-variety with weight monoid 
$\Gamma$, and hence is isomorphic to $Y := \Spec \big( k[\Gamma] \big)$.

We begin by constructing a special such variety $Z_0$. Choose generators 
$\lambda_1,\ldots,\lambda_N$ of the monoid $\Gamma$. Consider the $G$-module 
$$
V := V(\lambda_1)^* \oplus \cdots \oplus V(\lambda_N)^*,
$$
the sum of highest weight vectors 
$$
v := v_{\lambda_1^*} + \cdots + v_{\lambda_N^*},
$$
and define
$$
Z_0:= \overline{G \cdot v} \subset V.
$$ 
Since $v$ is fixed by $U$, the irreducible variety $Z_0$ is  horospherical in view of Proposition
\ref{prop:char}. The $\Lambda$-graded algebra $\cO(Z_0)$ is generated by $V(\lambda_1), \ldots, V(\lambda_N)$; 
thus, $\Lambda^+(Z_0) = \Gamma$. This yields a special algebra structure on the $G$-module
$$
V(\Gamma) := \bigoplus_{\lambda \in \Gamma} V(\lambda)
$$
such that the subalgebra $V(\Gamma)^U$ is isomorphic to $\cO(Y) = k[\Gamma]$. 

Each irreducible multiplicity-free variety $Z$ with weight monoid $\Gamma$ satisfies $\cO(Z) \cong V(\Gamma)$ 
as $G$-modules and $\cO(Z)^U \cong V(\Gamma)^U$ as $T$-algebras. This motivates the following:

\begin{definition}\label{def:alg} 
A {\bf family of algebra structures of type $\Gamma$ over a scheme $S$} consists of 
the structure of an $\cO_S$-$G$-algebra on $V(\Gamma) \otimes_k \cO_S$ that extends the 
given $\cO_S$-$T$-algebra structure on $V(\Gamma)^U$.  
\end{definition}

In other words, a family of algebra structures of type $\Gamma$ over $S$ is a multiplication law 
$m$ on $V(\Gamma) \otimes_k \cO_S$ which makes it an $\cO_S$-$G$-algebra and restricts to the 
multiplication of the $\cO_S$-$T$-algebra $V(\Gamma)^U \otimes_k \cO_S$. We may write
\begin{equation}\label{eqn:comp}
m = \sum_{\lambda, \mu, \nu \in \Gamma} m_{\lambda, \mu}^{\nu}
\end{equation}
where each component
$$
m_{\lambda, \mu}^{\nu} :  
\big( V(\lambda) \otimes_k \cO_S \big) \otimes_{\cO_S} \big( V(\mu) \otimes_k \cO_S \big)
\longrightarrow V(\nu) \otimes_k \cO_S
$$
is an $\cO_S$-$G$-morphism. Moreover, the commutativity of $m$ and its compatibility with
the multiplication on $V(\Gamma)^U \otimes_k \cO_S$ translate into linear relations between the 
$m_{\lambda,\mu}^{\nu}$'s, while the associativity translates into quadratic relations. Also, each
$m_{\lambda,\mu}^{\nu}$ may be viewed as a linear map
$$
\Hom^G\big( V(\lambda) \otimes_k V(\mu), V(\nu) \big) \longrightarrow H^0(S,\cO_S)
$$
or equivalently, as a morphism of schemes
$$
S \longrightarrow \Hom^G\big( V(\nu), V(\lambda) \otimes_k V(\mu) \big),
$$
and the polynomial relations between the $m_{\lambda,\mu}^{\nu}$'s are equivalent to polynomial
relations between these morphisms. It follows that 
\emph{the functor $M_{\Gamma}$ is represented by an affine scheme $\M_{\Gamma}$}, a closed subscheme of
the infinite-dimensional affine space. Also, note that $m_{\lambda,\mu}^{\nu} = 0$ unless 
$\nu \leq \lambda + \mu$.

In fact, \emph{the scheme $\M_{\Gamma}$ is of finite type}; yet one does not know how to obtain this 
directly from the preceding algebraic description. This finiteness property will rather be derived
from a relation of $\M_{\Gamma}$ to the invariant Hilbert scheme $\Hilb^G_h(V)$ that we now present. 

Given a family of algebra structures of type $\Gamma$ over $S$, the inclusion of $G$-modules 
$V^* \subset V(\Gamma)$ yields a homomorphism of $\cO_S$-$G$-algebras 
$$
\varphi : \Sym(V^*) \otimes_k \cO_S \longrightarrow V(\Gamma) \otimes_k \cO_S.
$$
Moreover, the $\cO_S$-$T$-algebra $V(\Gamma)^U \otimes_k \cO_S$ is generated by the images of the highest 
weight lines $V(\lambda_1)^U, \ldots,V(\lambda_N)^U \subset (V^*)^U$. In particular, the restriction
$$
\Sym\big( (V^*)^U \big) \otimes_k \cO_S \longrightarrow V(\Gamma)^U \otimes_k \cO_S
$$ 
is surjective; thus, $\varphi$ is surjective as well. This defines a family of closed $G$-subschemes 
$\cZ \subset V \times S$ with Hilbert function $h$, such that the sheaf of $\cO_S$-algebras 
$(p_*\cO_{\cZ})^U$ is generated by the preceding highest weight lines. Choosing highest weight vectors 
$v_{\lambda_1},\ldots,v_{\lambda_N}$, we obtain a surjective homomorphism of $\cO_S$-$T$-algebras
$\cO_S[t_1,\ldots,t_N] \to  (p_*\cO_{\cZ})^U$ that maps each $t_i$ to $v_{\lambda_i}$.
Equivalently, we obtain a closed immersion $\cZ/\!/U \hookrightarrow \bA^N \times S$
of families of closed $T$-subschemes with Hilbert function $h$, where $T$ acts linearly
on $\bA^N$ with weights $-\lambda_1,\ldots,-\lambda_N$. This is also equivalent to a morphism
$$
f: S  \to \Hilb^T(\ulambda)
$$ 
where the target is the toric Hilbert scheme of Example \ref{ex:tuf}(i). Now the condition that 
our family of algebra structures extends the given algebra structure on $V(\Gamma)^U$ means
that $f$ maps $S$ to the closed point $Z_0/\!/U$, viewed as a general $T$-orbit closure in $\bA^N$. 

Conversely, given a family of closed $G$-subschemes $\cZ \subset V \times S$ with Hilbert function
$h$ such that $(p_*\cO_{\cZ})^U$ is generated by $v_{\lambda_1},\ldots,v_{\lambda_N}$ and the
resulting morphism $f$ maps $S$ to the point $Z_0/\!/U$, we obtain an isomorphism
of $\cO_S$-$G$-modules $\cO_S \otimes_k V(\Gamma) \cong p_*\cO_{\cZ}$ which restricts to an
isomorphism of $\cO_S$-$T$-algebras $\cO_S \otimes_k V(\Gamma)^U \cong (p_*\cO_{\cZ})^U$. This translates 
into a family of algebra structures of type $\Gamma$ over $S$.

Summarizing, we have the following link between algebra structures and invariant Hilbert schemes:

\begin{theorem}\label{thm:mm}
With the preceding notation and assumptions, there exists an open subscheme 
$$
\Hilb^G_h(V)_0 \subset \Hilb^G_h(V)
$$ 
that parametrizes those families $\cZ$ such that the $\cO_S$-algebra $(p_* \cO_{\cZ})^U$ 
is generated by the image of $(V^*)^U$ under $\varphi$. Moreover, there exists a morphism 
$$
f : \Hilb^G_h(V)_0 \longrightarrow \Hilb^T(\ulambda)
$$ 
that sends $\cZ$ to $\cZ/\!/U$. The fiber of $f$ at the closed point
$Z_0/\!/U \in \Hilb^T(\ulambda)$ represents the functor $M_{\Gamma}$.
\end{theorem}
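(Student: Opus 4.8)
The plan is to settle the three assertions in turn, using the two constructions relating families $\cZ$ and families of algebra structures sketched in the paragraphs just above, and reducing the scheme-theoretic content to the openness of finitely many ``generation'' conditions. First, for the open subscheme: over $\Hilb^G_h(V)$, which is of finite type and hence Noetherian, the universal family yields a quasi-coherent sheaf of $\Gamma$-graded $\cO$-algebras $\cA = \bigl( p_* \cO_{\Univ^G_h(V)} \bigr)^U = \bigoplus_{\lambda \in \Gamma} \cA_\lambda$, with each $\cA_\lambda$ invertible since $h = h_\Gamma$, and $\cA$ is finitely generated as a sheaf of $\cO$-algebras by the family version of Lemma \ref{lem:finU}. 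By Noetherianity the increasing chain of subalgebras generated by the $\cA_\lambda$ for $\lambda$ in a finite subset of $\Gamma$ stabilizes, so there is a \emph{fixed} finite set $\Gamma_0 \subset \Gamma$, valid over the whole scheme, with $\cA$ generated by $\bigoplus_{\lambda \in \Gamma_0} \cA_\lambda$ (bounds of this kind are already implicit in \cite{HS04}). Hence a family $\cZ$ over a base $S$ has $(p_* \cO_\cZ)^U$ generated by the image of $(V^*)^U$ under $\varphi$ if and only if, for every $\lambda \in \Gamma_0$, the $\cO_S$-linear map $\bigl( k[t_1,\ldots,t_N] \bigr)_\lambda \otimes_k \cO_S \to \cF_{V(\lambda)}$ sending $t_i$ to $v_{\lambda_i} \in (V^*)^U$ is surjective; for then the subalgebra generated by $(V^*)^U$ contains $(p_* \cO_\cZ)^U_\lambda$ for all $\lambda \in \Gamma_0$, hence all of $(p_* \cO_\cZ)^U$. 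Since surjectivity onto a locally free sheaf is an open condition compatible with base change, the intersection of these finitely many open loci in $\Hilb^G_h(V)$ is an open subscheme $\Hilb^G_h(V)_0$ representing the required subfunctor.

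Next, the morphism $f$. Over $\Hilb^G_h(V)_0$ the previous step says precisely that $(p_* \cO_\cZ)^U$ is a quotient of $k[t_1,\ldots,t_N] \otimes_k \cO_S$, that is, $\cZ/\!/U$ is canonically a closed subscheme of $\bA^N \times S$ (with $T$ acting on $\bA^N$ through the weights $-\lambda_1,\ldots,-\lambda_N$), flat over $S$ with Hilbert function $h_\Gamma$; this is exactly an $S$-point of the toric Hilbert scheme $\Hilb^T(\ulambda)$ of Example \ref{ex:tuf}(i), and $\cZ \mapsto \cZ/\!/U$ is functorial in $S$, so it defines $f$. Here $Z_0/\!/U = \Spec\, k[\Gamma]$ is the closed point of $\Hilb^T(\ulambda)$ corresponding to a general $T$-orbit closure with weight monoid $\Gamma$.

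Finally, the fibre. Write $F := f^{-1}(Z_0/\!/U) \subset \Hilb^G_h(V)_0$, a closed subscheme. By the universal property of $\Hilb^T(\ulambda)$, an $S$-point of $F$ is the same as a family $\cZ$ in $\Hilb^G_h(V)_0(S)$ for which $\cZ/\!/U = (Z_0/\!/U) \times S$ as closed subschemes of $\bA^N \times S$. This equality trivializes each weight space, $(p_* \cO_\cZ)^U_\lambda = \cO_S \cdot e^\lambda$, and, through the highest-weight isomorphisms $\cF_{V(\lambda)} \cong (p_* \cO_\cZ)^U_\lambda$, it trivializes every sheaf of covariants, giving a canonical isomorphism of $\cO_S$-$G$-modules $p_* \cO_\cZ \cong V(\Gamma) \otimes_k \cO_S$ under which the $\cO_S$-$T$-algebra structure on $U$-invariants becomes the standard one on $k[\Gamma] \otimes_k \cO_S = V(\Gamma)^U \otimes_k \cO_S$; transporting the multiplication of $p_* \cO_\cZ$ then produces a family of algebra structures of type $\Gamma$ over $S$. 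Conversely, a family of algebra structures of type $\Gamma$ gives, via the inclusion $V^* \subset V(\Gamma)$, the surjection $\varphi$ as in the text and hence a family $\cZ \subset V \times S$ with Hilbert function $h$ whose $U$-invariant algebra $k[\Gamma] \otimes_k \cO_S$ is generated by the images of $v_{\lambda_1},\ldots,v_{\lambda_N}$ (because $\Gamma$ is generated by $\lambda_1,\ldots,\lambda_N$); so $\cZ$ lies in $\Hilb^G_h(V)_0(S)$ and $\cZ/\!/U = (Z_0/\!/U) \times S$, i.e.\ $\cZ \in F(S)$. These two assignments are mutually inverse and natural in $S$, so $F$ represents $M_\Gamma$; in particular $\M_\Gamma \cong F$ is a closed subscheme of the quasi-projective scheme $\Hilb^G_h(V)$, and is therefore of finite type.

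The main obstacle I expect is the bookkeeping in the last step: verifying that the identifications used there — the equality $\cZ/\!/U = (Z_0/\!/U) \times S$, the highest-weight isomorphisms $\cF_{V(\lambda)} \cong (p_* \cO_\cZ)^U_\lambda$, and the induced isomorphism $p_* \cO_\cZ \cong V(\Gamma) \otimes_k \cO_S$ — are canonical and mutually compatible, so that the two constructions define mutually inverse \emph{natural} transformations of functors rather than mere bijections on isomorphism classes. The uniform finite set $\Gamma_0$ behind the openness in the first step is the other point requiring a little care, though it is by now standard.
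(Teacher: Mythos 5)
Your proposal is correct and follows essentially the same route as the paper: the mutually inverse constructions between families of algebra structures of type $\Gamma$ and families $\cZ \subset V \times S$ with $(p_*\cO_{\cZ})^U$ generated by $(V^*)^U$ and $\cZ/\!/U$ constant equal to $Z_0/\!/U$, with the morphism $f$ arising from the surjection $\cO_S[t_1,\ldots,t_N] \to (p_*\cO_{\cZ})^U$. Your only real addition is the explicit openness argument via a uniform finite set $\Gamma_0$ of weights and finitely many surjectivity conditions onto coherent sheaves, a point the paper asserts but does not spell out; that argument is sound (one can even take $\Gamma_0$ to be the weights in $\Gamma$ of a homogeneous generating set of $\cO(V)^U$).
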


We denote the fiber of $f$ at $Z_0/\!/U$ by $\M_{\Gamma}$ and call it the 
{\bf moduli scheme of multiplicity-free varieties with weight monoid $\Gamma$}. Since it represents the 
functor $M_{\Gamma}$, the scheme $\M_{\Gamma}$ is independent of the choices of generators of $\Gamma$ and of 
highest weight vectors. It comes with a special point $Z_0$, the common horospherical degeneration to 
all of its closed points. 

We may also consider the preimage in $\Hilb^G_h(V)_0$ of the open $(\bG_m)^N$-orbit 
$\Hilb^T_{\ulambda} \subset \Hilb^T(\ulambda)$ that consists of general $T$-orbit closures. 
This preimage is an open subscheme of $\Hilb^G_h(V)_0$, that we denote by $\Hilb^G_{\ulambda}$. 
Its closed points are exactly the irreducible multiplicity-free varieties $Z \subset V$ 
having weight monoid $\Gamma$ and such that the projections $Z \to V(\lambda_1)^*, \ldots,V(\lambda_N)^*$ 
are all non-zero; equivalently, the horospherical degeneration $Z_0$ is contained in $V$. 
Such varieties $Z$ are called {\bf non-degenerate}. 

Next, we relate these constructions to the action of the adjoint torus $T_{\ad}$ on $\Hilb^G_h(V)$, 
defined in the previous subsection. The torus $(\bG_m)^N$ acts on $\Hilb^G_h(V)$ as the equivariant 
automorphism group of the $G$-module $V$. This action stabilizes the open subschemes $\Hilb^G_h(V)_0$ 
and $\Hilb^G_{\ulambda}$; moreover, $f$ is equivariant for the natural action of $(\bG_m)^N$ on the 
toric Hilbert scheme. Also, note that the $(\bG_m)^N$-orbit $\Hilb^T_{\ulambda}$ is isomorphic 
to $(\bG_m)^N/\ulambda(T)$ where $\ulambda$ denotes the homomorphism (\ref{eqn:ulambda}). 
This yields an action of $T$ on $\M_{\Gamma}$ and one checks that the center $Z(G)$ acts trivially. 
Thus, \emph{$T_{\ad}$ acts on $\M_{\Gamma}$ and each $(\bG_m)^N$-orbit in $\Hilb^G_{\ulambda}$ intersects 
$\M_{\Gamma}$ along a unique $T_{\ad}$-orbit}. 

Given a family $\cZ$ of non-degenerate subvarieties of $V$, one shows that the associated horospherical 
family $\cX$ is a family of non-degenerate subvarieties of $V$ as well. It follows that 
\emph{the $T_{\ad}$-action on $\M_{\Gamma}$ extends to an $\bA^r$-action such that the origin of $\bA^r$
acts via the constant map to $Z_0$.} In particular, $Z_0$ is the unique $T_{\ad}$-fixed point and is 
contained in each $T_{\ad}$-orbit closure; thus, \emph{the scheme $\M_{\Gamma}$ is connected}.
 
The $T_{\ad}$-action on $\M_{\Gamma}$ may also be seen in terms of multiplication laws (\ref{eqn:comp}): 
by \cite[Proposition 2.11]{AB05}, 
\emph{each non-zero component $m_{\lambda, \mu}^{\nu}$ is a $T_{\ad}$-eigenvector of weight $\lambda + \mu - \nu$}
(note that $\lambda + \mu - \nu$ lies in the root lattice, and hence is indeed a character of $T_{\ad}$).

As a consequence, given an irreducible multiplicity-free variety $Z$ with weight monoid $\Gamma$, the
$T_{\ad}$-orbit closure of $Z$ (viewed as a closed point of $\M_{\Gamma}$) has for weight monoid 
the submonoid $\Sigma_Z \subset \Lambda$ generated by the weights $\lambda + \mu - \nu$
where $V(\nu)$ is contained in the product $V(\lambda) \cdot V(\mu) \subset \cO(Z)$. In particular,
\emph{the monoid $\Sigma_Z$ is finitely generated}. Again, it is not known how to obtain this result directly 
from the algebraic definition of the {\bf root monoid} $\Sigma_Z$.

In view of a deep theorem of Knop (see \cite[Theorem 1.3]{Kn96}), the saturation of the monoid $\Sigma_Z$ is free, 
i.e., generated (as a monoid) by linearly independent elements. Equivalently, 
\emph{the normalization of each $T_{\ad}$-orbit closure in $\M_{\Gamma}$ is equivariantly isomorphic
to an affine space on which $T_{\ad}$ acts linearly}. 

We also mention a simple relation between the Zariski tangent space to $\M_{\Gamma}$ at a closed point $Z$ 
and the space $T^1(Z)$ parametrizing first-order deformations of $Z$: namely,
\emph{the normal space to the orbit $T_{\ad} \cdot Z$ in $\M_{\Gamma}$ is isomorphic to the $G$-invariant
subspace $T^1(Z)^G$} (see \cite[Proposition 1.13]{AB05}). In particular, 
$$
T_{Z_0} \M_{\Gamma} = T^1(Z_0)^G
$$
as $Z_0$ is fixed by $T_{\ad}$.

In fact, many results of this subsection hold in the more general setting where the algebra $k[\Gamma]$ 
is replaced with an arbitrary $T$-algebra of finite type; see \cite{AB05}. The multiplicity-free case
presents remarkable special features; namely, finiteness properties that will be surveyed in the next subsection.

\begin{example}\label{ex:free}
If the monoid $\Gamma$ is free, then of course we choose $\lambda_1,\ldots,\lambda_N$ to be its minimal generators.
Since they are linearly independent, $\Hilb^T(\ulambda)$ is a (reduced) point and hence
$$
\Hilb^G_h(V)_0 = \Hilb^G_{\ulambda} = \M_{\Gamma}.
$$
Also, since the homomorphism $\ulambda$ is surjective, each $(\bG_m)^N$-orbit in $\Hilb^G_{\ulambda}$ 
is a unique $T_{\ad}$-orbit. The pull-back $\pi : \Univ_{\Gamma} \to \M_{\Gamma}$ of the universal family of 
$\Hilb^G_h(V)$ may be viewed as the universal family of non-degenerate spherical subvarieties of $V$.
\end{example}

\subsection{Finiteness properties of spherical varieties}
\label{subsec:fpsv}

In this subsection, we survey finiteness and uniqueness results relative to the structure and classification 
of spherical varieties. We still denote by $G$ a connected reductive group; we fix a Borel subgroup 
$B \subset G$ and a maximal torus $T \subset B$.   

Recall that a (possibly non-affine) $G$-variety $X$ is spherical, if $X$ is normal and contains an
open $B$-orbit; in particular, $X$ contains an open $G$-orbit $X_0$. Choosing a base point $x \in X_0$ and 
denoting by $H$ its isotropy group, we may identify $X_0$ with the homogeneous space $G/H$. 
We say that $H$ is a {\bf spherical subgroup} of $G$, and the pair $(X,x)$ is an 
{\bf equivariant embedding} of $G/H$; the complement $X \setminus X_0$ is called the {\bf boundary}. 
Morphisms of embeddings are defined as those equivariant morphisms that preserve base points. 
If the variety $X$ is complete, then $X$ is called an {\bf equivariant completion} (or equivariant 
compactification) of $G/H$.

One can show that \emph{any spherical $G$-variety contains only finitely many $B$-orbits}; as a consequence,
\emph{any equivariant embedding of a spherical $G$-homogeneous space contains only finitely many $G$-orbits}. 
Conversely, if a $G$-homogeneous space $X_0$ satisfies the property that all of its equivariant embeddings 
contain finitely many orbits, then $X_0$ is spherical.

Spherical homogeneous spaces admit a further remarkable characterization, in terms of the existence of 
equivariant completions with nice geometric properties. Specifically, consider an embedding $X$ of a homogeneous 
space $X_0 = G/H$. Assume that $X$ is smooth and $X \setminus X_0$ is a union of smooth prime
divisors that intersect transversally; in other words, the boundary is a {\bf divisor with simple normal crossings}. 
Then we may consider the associated {\bf sheaf of logarithmic vector fields}, consisting of those derivations of 
$\cO_X$ that preserve the ideal sheaf of $D := X \setminus X_0$. This subsheaf, denoted by $T_X(-\log D)$, 
is a locally free subsheaf of the tangent sheaf $T_X$ of all derivations of $\cO_X$; both sheaves coincide
along $X_0$. The {\bf logarithmic tangent bundle} is the vector bundle on $X$ associated with $T_X(-\log D)$.
The $G$-action on $(X,D)$ yields an action of the Lie algebra $\fg$ by derivations that preserve $D$,
i.e., a homomorphism of Lie algebras 
$$
\fg \longrightarrow H^0 \big(X, T_X(-\log D) \big).
$$ 
We say that the pair $(X,D)$ is {\bf log homogeneous under $G$} if $\fg$ generates the sheaf 
of logarithmic vector fields. Now \emph{any complete log homogeneous $G$-variety is spherical; moreover, 
any spherical $G$-homogeneous space admits a log homogeneous equivariant completion} (as follows from 
\cite[Sections 2.2, 2.5]{BB96}; see \cite{Bri07b} for further developments on log homogeneous varieties and 
their relation to spherical varieties). We will need a stronger version of part of this result:

\begin{lemma}\label{lem:lh}
Let $X$ be a smooth spherical $G$-variety with boundary a divisor $D$ with simple normal crossings and 
denote by $S_X$ the subsheaf of $T_X(- \log D)$ generated by $\fg$. If $S_X$ is locally free, then 
$S_X = T_X(- \log D)$. In particular, $X$ is log homogeneous.
\end{lemma}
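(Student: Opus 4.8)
The plan is to exploit the fact that both $S_X$ and $T_X(-\log D)$ are locally free sheaves of the same rank $n := \dim X$, with $S_X \subseteq T_X(-\log D)$, so that it suffices to check the inclusion is an equality at every point, i.e. that the induced map on fibers is an isomorphism (equivalently, surjective). First I would reduce to a local/pointwise statement: since $S_X$ and $T_X(-\log D)$ are both locally free of rank $n$ and $S_X \hookrightarrow T_X(-\log D)$, the cokernel $\cQ$ is a coherent sheaf supported on a closed subset $Z \subsetneq X$, and $\cQ$ vanishes iff $Z = \emptyset$. Over the open orbit $X_0 = G/H$ the sheaves already coincide (the action map $\fg \otimes \cO_{X_0} \to T_{X_0}$ is surjective because $G$ acts transitively), so $Z \subseteq D$. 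The task is therefore to show that $\fg$ generates $T_X(-\log D)$ at every point of $D$.

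The key step is a local analysis at a point $x$ lying on the intersection of some of the boundary divisors. Because $X$ is smooth and $D$ has simple normal crossings, one can choose local coordinates $x_1,\ldots,x_n$ at $x$ such that $D$ is cut out by $x_1 \cdots x_k = 0$ for some $k$, and then $T_X(-\log D)$ is the free $\cO_{X,x}$-module with basis $x_1\partial_{x_1},\ldots,x_k\partial_{x_k},\partial_{x_{k+1}},\ldots,\partial_{x_n}$. I would use the Borel orbit structure: since $X$ is spherical, it has finitely many $B$-orbits, hence finitely many $G$-orbits, and $x$ lies on a $G$-orbit $O$ whose closure meets the coordinate pattern above. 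The logarithmic tangent bundle restricts on each $G$-orbit $O$ to a bundle whose fiber is an $\fg$-module quotient; the point is that the $G$-action on the pair $(X,D)$ means $\fg$ maps into $H^0(X, T_X(-\log D))$, and evaluating at $x$ gives a linear map $\fg \to (S_X)_x \otimes k(x) \subseteq T_X(-\log D) \otimes k(x)$. The crucial observation is that the quotient $T_X(-\log D)_x \otimes k(x) / \mathrm{im}(\fg)$, if nonzero, would persist on a whole neighborhood — but here is where local freeness of $S_X$ does the work: a locally free subsheaf of a locally free sheaf of the same rank that is saturated (equivalently, has torsion-free cokernel) must be the whole sheaf along any point where the fibers already agree, and one propagates agreement from $X_0$ using that $\cQ$ is both torsion (supported in codimension $\geq 1$) and, since $S_X$ is a \emph{locally free} subsheaf of the same rank, $\cQ$ is a torsion sheaf whose annihilator is the determinant of the inclusion matrix; I would argue this determinant is a unit. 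Concretely: pick $n$ elements $\xi_1,\ldots,\xi_n \in \fg$ whose images form a basis of $T_X(-\log D)_x \otimes k(x)$ — such exist precisely because over the generic point of each boundary stratum $G$ acts with open orbit (sphericity forces each $G$-orbit closure to again be spherical, so the stratum through $x$ has a dense $B$-orbit and $\fg$ spans the logarithmic normal directions). Then $\det(\xi_1,\ldots,\xi_n)$ is a regular function nonvanishing at $x$, so $S_X = T_X(-\log D)$ near $x$.

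The main obstacle I expect is justifying that $\fg$ spans the fiber $T_X(-\log D) \otimes k(x)$ at a \emph{boundary} point $x$ — a priori this is exactly the log-homogeneity one is trying to prove, so the argument must not be circular. The way out is to \emph{not} prove fiberwise surjectivity directly but instead use the hypothesis that $S_X$ is locally free to get torsion-freeness of the cokernel $\cQ = T_X(-\log D)/S_X$: a short exact sequence $0 \to S_X \to T_X(-\log D) \to \cQ \to 0$ with the first two terms locally free of rank $n$ forces $\cQ$ to have homological dimension $\leq 1$ and rank $0$; moreover, because $S_X$ is locally free (not merely coherent), the inclusion is \emph{locally split away from the support of $\cQ$}, and one shows $\mathrm{Supp}(\cQ)$, being $G$-stable and disjoint from $X_0$, is a union of $G$-orbit closures in $D$. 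Then I would invoke the known general result (from \cite{BB96}, Sections 2.2 and 2.5) that \emph{some} log homogeneous completion exists together with a local structure theorem for spherical varieties near a boundary divisor, to rule out that $\cQ$ can be supported on any boundary stratum when $S_X$ is already locally free of the right rank. In short, the hard part is the bookkeeping that converts "$S_X$ locally free of rank $n$, agreeing with $T_X(-\log D)$ on $X_0$" into "$\mathrm{Supp}(\cQ) = \emptyset$", and I would handle it by the determinant-of-a-frame argument sketched above, choosing the frame from $\fg$ using that each $G$-orbit in a spherical variety is itself spherical.
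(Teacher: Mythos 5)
Your overall skeleton matches the paper's: both sheaves agree on the open orbit, the cokernel $\cQ = T_X(-\log D)/S_X$ is $G$-stable and supported in the boundary, and one appeals to \cite[Section 2]{BB96} to kill the remaining support. But there is a genuine gap at the central step. You propose to choose $\xi_1,\ldots,\xi_n \in \fg$ whose images form a basis of the fiber $T_X(-\log D)\otimes k(x)$ at a general point $x$ of an arbitrary boundary stratum, justified by ``sphericity forces each $G$-orbit closure to again be spherical, so \dots $\fg$ spans the logarithmic normal directions.'' This is a non sequitur and, taken at face value, false: a dense $B$-orbit in the stratum through $x$ only controls the tangent directions \emph{along} that stratum, not the logarithmic normal directions $x_i\partial_{x_i}$. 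If $\fg$ spanned the log tangent fiber at general points of every stratum, the lemma would hold with no hypothesis on $S_X$ at all, and it does not --- that is precisely why the statement carries the assumption that $S_X$ is locally free. So your ``way out'' of the circularity (the determinant of a frame drawn from $\fg$) still presupposes the conclusion at the point $x$.

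The correct division of labor, which is what the paper does, is: (a) local freeness of $S_X$ is used \emph{only} to conclude that $\Supp(\cQ)$ has \emph{pure codimension one} --- locally the inclusion $S_X \hookrightarrow T_X(-\log D)$ of two locally free sheaves of rank $n$ is an $n\times n$ matrix with respect to local frames of $S_X$ and of $T_X(-\log D)$ (not frames from $\fg$), and $\Supp(\cQ)$ is the zero locus of its determinant, hence a divisor if nonempty; and (b) the input from \cite[Section 2]{BB96} is needed only at $G$-orbits of codimension one, where the local structure theorem shows $\fg$ does generate $T_X(-\log D)$ generically. Purity from (a) means codimension-one orbits are the only ones that have to be excluded, so (b) finishes the proof. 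Your write-up gestures at both ingredients but never isolates the purity statement, and instead tries to verify fiberwise surjectivity on all strata, which is exactly the statement being proved. As written, the argument does not close.
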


\begin{proof}
Clearly, $S_X$ and $T_X(- \log D)$ coincide along the open $G$-orbit. Since these sheaves are locally free, 
the support of the quotient $T_X(- \log D)/S_X$ has pure codimension $1$ in $X$. But this support is $G$-stable, 
and contains no $G$-orbit of codimension $1$ by \cite[Section 2]{BB96}. Thus, this support is empty; this yields 
our assertion.
\end{proof}

Log homogeneous pairs satisfy an important rigidity property, namely,
\begin{equation}\label{eqn:rig}
H^1\big( X, T_X(-\log D) \big) = 0
\end{equation}
whenever $X$ is complete (as follows from a vanishing theorem due to Knop, see \cite[Theorem 4.1]{Kn94}).
This is the main ingredient for proving the following finiteness result (\cite[Theorem 3.1]{AB05}):

\begin{theorem}\label{thm:fin}
For any $G$-variety $X$, only finitely many conjugacy classes of spherical subgroups of $G$ occur as isotropy 
groups of points of $X$.
\end{theorem}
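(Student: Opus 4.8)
The plan is to reduce to a statement about a single spherical homogeneous space at a time, then use the moduli scheme $\M_\Gamma$ of the previous subsection together with the rigidity statement \eqref{eqn:rig} to bound the relevant data. First I would observe that it suffices to treat the case where $X = G/H$ is itself a spherical homogeneous space: indeed, if $X$ is an arbitrary $G$-variety and $x \in X$ has spherical isotropy group $G_x$, then the orbit $G\cdot x \cong G/G_x$ is a locally closed spherical subvariety of $X$, and by noetherian induction on $X$ (stratifying by orbit type, or just by dimension of orbits) it is enough to show that for a fixed spherical $G/H$ only finitely many conjugacy classes of spherical subgroups arise as isotropy groups of points of $G/H$ — but the isotropy group of any point of $G/H$ is conjugate to $H$, so this case is trivial, and the content is really in how the strata fit together. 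The honest reduction is different: one fixes the generic orbit type and argues that the possible isotropy groups along the boundary of a log homogeneous completion are controlled. Concretely, I would pass to a smooth equivariant completion $\bar X$ of each orbit with simple normal crossings boundary $D$ which is log homogeneous (this exists by the result of \cite{BB96} quoted above, in the strengthened form of Lemma \ref{lem:lh}); then every isotropy group of a point of $X$ is, up to conjugacy, the isotropy group of a point in some such $\bar X$, and the stabilizer of a point lying on a codimension-$c$ stratum of $(\bar X, D)$ is determined up to conjugacy by the combinatorial/infinitesimal data at that point.

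The key step is then the following: the pairs $(\bar X, D)$ that can occur, up to isomorphism, form a bounded family. Here I would use that a log homogeneous pair is \emph{rigid}: by \eqref{eqn:rig} we have $H^1(\bar X, T_{\bar X}(-\log D)) = 0$, so $(\bar X, D)$ has no infinitesimal deformations as a variety with a normal crossings divisor, and moreover — this is the crucial point of \cite[Theorem 3.1]{AB05} — the $G$-action deforms along with it, so the pair $(\bar X, D)$ together with its $G$-action is rigid. Combined with the fact that spherical varieties of a fixed dimension embed $G$-equivariantly into a projective space $\bP(V)$ with $V$ of bounded dimension (because the multiplicities in $\cO$ of an affine cone are all $\le 1$, so one controls the generators), this forces the $(\bar X, D)$ to lie in finitely many components of an appropriate Hilbert-type moduli space, hence to fall into finitely many isomorphism classes. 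Each such pair has finitely many strata, each stratum contributes one conjugacy class of isotropy group, and so altogether only finitely many conjugacy classes of spherical subgroups arise.

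More precisely, I would organize the argument around $\M_\Gamma$: an affine spherical $G$-variety $Z$ with weight monoid $\Gamma$ is a closed point of the \emph{finite-type} scheme $\M_\Gamma$, and for a fixed $G$ there are only finitely many monoids $\Gamma$ that can serve as weight monoids of the affine cones attached to spherical subgroups of bounded ``height'' — one reduces to boundedly many $\Gamma$ by bounding the dimension and using that $\Gamma$ is saturated and generated in a controlled range. Since each $\M_\Gamma$ has finitely many irreducible components and the isomorphism type of a spherical variety is, by the uniqueness results quoted (e.g. Knop's theorem on the root monoid), constant on — or at least finite-to-one over — suitable pieces, the set of spherical subgroups obtained is finite. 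The spherical subgroups appearing as isotropy groups of points of a given $X$ are then exhausted by those appearing in the completions of its (finitely many) orbit closures, giving the result.

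\textbf{Main obstacle.} The hard part will be turning the rigidity statement \eqref{eqn:rig} into genuine boundedness of the family of pairs $(\bar X, D, G\text{-action})$: $H^1$ vanishing gives unobstructedness and no \emph{local} moduli, but to conclude finitely many isomorphism classes one must bound some discrete invariant (dimension, degree of a projective embedding, the weight monoid $\Gamma$) uniformly over all spherical subgroups that could occur — a priori these grow with the embedding dimension of the ambient $X$. Pinning this down requires the finiteness of $\M_\Gamma$ and a uniform bound on the relevant $\Gamma$'s in terms of $\dim X$ and $G$, and it is there that the argument of \cite[Theorem 3.1]{AB05} does its real work; the rest is bookkeeping with orbit stratifications and conjugacy.
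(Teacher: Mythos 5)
You have correctly identified the key ingredients (fiberwise log homogeneous completions, Lemma \ref{lem:lh}, and the rigidity statement (\ref{eqn:rig})), but the way you assemble them leaves the decisive step unproved, and your fallback is circular. The theorem's content is not about a single spherical homogeneous space (as you note, that case is vacuous), nor about bounding \emph{all} pairs $(\bar X, D)$ that could ever occur for the group $G$. It is about a \emph{family} of spherical orbits inside the given $X$: after reducing to the case where $X$ is irreducible, smooth, and admits a smooth geometric quotient $p : X \to S$ with all fibers spherical, one completes fiberwise to a proper family $\bar p : \bar X \to S$, blows up to make $\bar X$ smooth with simple normal crossings boundary and the subsheaf of $T_{\bar X}$ generated by $\fg$ locally free, and applies Lemma \ref{lem:lh} to see that $\bar X \to S$ is a family of complete log homogeneous varieties. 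The rigidity (\ref{eqn:rig}) is then applied \emph{to this family}: the vanishing of $H^1\big(\bar X_s, T_{\bar X_s}(-\log D_s)\big)$ forces the family to be locally trivial over a dense open subset of $S$, so the isotropy groups of points in the fibers over that open set fall into one conjugacy class each (finitely many orbits per fiber, constant along $S$), and one concludes by noetherian induction on $\dim S$. No global boundedness of completions is needed.

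By contrast, you try to prove that the pairs $(\bar X, D)$ form a bounded family for the fixed group $G$, explicitly flag that you cannot derive this from $H^1$-vanishing alone, and then propose to close the gap using $\M_\Gamma$. That route is circular in the logic of this paper: the scheme $\M_\Gamma$ is of finite type, but finiteness of \emph{isomorphism classes} of affine spherical varieties with weight monoid $\Gamma$ is equivalent to finiteness of the $T_{\ad}$-orbits on $\M_\Gamma$, and this is deduced in the text as a \emph{consequence} of Theorem \ref{thm:fin}, not an input to it. A finite-type scheme with a torus action can have infinitely many orbits, so "finitely many irreducible components of $\M_\Gamma$" plus an unproven "finite-to-one on suitable pieces" does not yield the conclusion. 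Similarly, your claim that spherical completions of bounded dimension embed in a projective space of bounded dimension is itself a nontrivial boundedness statement of the same order of difficulty as the theorem. The missing idea, in short, is that rigidity should be used to trivialize the specific completed family over the parameter space $S$ of orbits of $X$, rather than to bound an absolute moduli problem.
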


In other words, \emph{any $G$-variety contains only finitely many isomorphism classes of spherical $G$-orbits}. 

For the proof, one reduces by general arguments of algebraic transformation groups to the case that 
$X$ is irreducible and admits a geometric quotient 
$$
p: X \longrightarrow S
$$ 
where the fibers of $p$ are exactly the $G$-orbits. Arguing by induction on the dimension, 
we may replace $S$ with an open subset; thus, we may further 
assume that $X$ and the morphism $p$ are smooth. Then the spherical $G$-orbits form an open subset of $X$, 
since the same holds for the $B$-orbits of dimension $\dim(X) - \dim(S)$. So we may assume that all fibers are 
spherical. Now, by general arguments of algebraic transformation groups again, there exists an equivariant
fiberwise completion of $X$, i.e., a $G$-variety $\bar{X}$ equipped with a proper $G$-invariant morphism 
$$
\overline{p}: \bar{X} \longrightarrow S
$$ 
such that $\bar{X}$ contains $X$ as a $G$-stable open subset, and $\bar{p}$
extends $p$. We may further perform equivariant blow-ups and hence assume that $\bar{X}$ is smooth, 
the boundary $\bar{X} \setminus X$ is a divisor with simple normal crossings, and the subsheaf 
$S_{\bar{X}} \subset T_{\bar{X}}$ generated by $\fg$ is locally free. By Lemma \ref{lem:lh}, 
it follows that $\bar{X}$ is a family of log homogeneous varieties (possibly after shrinking $S$ again). 
Now the desired statement is a consequence of rigidity (\ref{eqn:rig}) together with arguments of 
deformation theory; see \cite[pp. 113--115]{AB05} for details.

As a direct consequence of Theorem \ref{thm:fin}, \emph{any finite-dimensional $G$-module $V$ contains
only finitely many closures of spherical $G$-orbits, up to the action of $\GL(V)^G$} (see \cite[p. 116]{AB05}).
In view of the results of Subsection \ref{subsec:mav}, it follows that
\emph{every moduli scheme $\M_{\Gamma}$ contains only finitely many $T_{\ad}$-orbits}. In particular, 
\emph{up to equivariant isomorphism, there are only finitely many affine spherical varieties having a 
prescribed weight monoid}.

This suggests that spherical varieties may be classified by combinatorial invariants. Before presenting 
a number of results in this direction, we associate three such invariants to a spherical homogeneous space 
$X_0 = G/H$.   

The first invariant is the set of weights of $B$-eigenvectors in the field of rational functions $k(X_0) = k(G)^H$; 
this is a subgroup of $\Lambda$, denoted by $\Lambda(X_0)$ and called the {\bf weight lattice} of $X_0$. 
The rank of this lattice is called the {\bf rank} of $X_0$ and denoted by $\rk(X_0)$. 
Note that any $B$-eigenfunction is determined by its weight up to a non-zero scalar, since $k(X_0)^B = k$ 
as $X_0$ contains an open $B$-orbit. 

The second invariant is the set $\cV(X_0)$ of those discrete valuations of the field $k(X_0)$, with values in 
the field $\bQ$ of rational numbers, 
that are invariant under the natural $G$-action. One can show that any such valuation is uniquely determined by 
its restriction to $B$-eigenvectors; moreover, this identifies $\cV(X_0)$ to a convex polyhedral cone in 
the rational vector space $\Hom\big( \Lambda(X_0), \bQ \big)$. Thus, $\cV(X_0)$ is called the {\bf valuation cone}.

The third invariant is the set $\cD(X_0)$ of $B$-stable prime divisors in $X_0$, called {\bf colors}; 
these are exactly the irreducible components of the complement of the open $B$-orbit. Any $D \in \cD(X_0)$ 
defines a discrete valuation of $k(X_0)$, and hence (by restriction to $B$-eigenvectors) a point 
$\varphi_D \in \Hom\big( \Lambda(X_0), \bQ \big)$. Moreover, the stabilizer of $D$ in $G$ is a parabolic subgroup 
$G_D$ containing $B$, and hence corresponds to a set of simple roots. Thus, $\cD(X_0)$ may be viewed as 
an abstract finite set equipped with maps $D \mapsto \varphi_D$ to $\Hom\big( \Lambda(X_0), \bQ \big)$
and $D \mapsto G_D$ to subsets of simple roots. 

The invariants $\Lambda(X_0)$, $\cV(X_0)$, $\cD(X_0)$ are the main ingredients of a 
\emph{classification of all equivariant embeddings of $X_0$}, due to Luna and Vust 
(see \cite{Pe10} for an overview, and \cite{Kn91} for an exposition; the original article \cite{LV83} addresses
embeddings of arbitrary homogeneous spaces, see also \cite{Ti07}). This classification is formulated in terms of 
{\bf colored fans}, combinatorial objects that generalize the fans of toric geometry. Indeed, toric varieties are 
exactly the equivariant embeddings of a torus $T$ viewed as a homogeneous space under itself. In that case, 
$\Lambda(T)$ is just the character lattice, and the set $\cD(T)$ is empty; one shows that $\cV(T)$ is the whole space 
$\Hom \big( \Lambda(T),\bQ \big)$.

Another important result, due to Losev (see \cite[Theorem 1]{Lo09a}), asserts that 
\emph{any spherical homogeneous space is uniquely determined by its weight lattice, valuation cone and colors,
up to equivariant isomorphism}. 
The proof combines many methods, partial classifications, and earlier results, including the Luna-Vust
classification and the finiteness theorem \ref{thm:fin}.

Returning to an affine spherical variety $Z$, one can show that the valuation cone of the open $G$-orbit $Z_0$ 
is dual (in the sense of convex geometry) to the cone generated by the root monoid $\Sigma_Z$.
Also, recall that the saturation of $\Sigma_Z$ is a free monoid; its generators are called the {\bf spherical roots}
of $Z$. By another uniqueness result of Losev (see \cite[Theorem 1.2]{Lo09b}), 
\emph{any affine spherical $G$-variety is uniquely determined by its weight monoid and spherical roots, 
up to equivariant isomorphism}. Moreover, 
\emph{any smooth affine spherical $G$-variety is uniquely determined by its weight monoid},
again up to equivariant isomorphism (\cite[Theorem 1.3]{Lo09b}). 

Note that all smooth affine spherical varieties are classified in \cite{KS06}; yet one does not know how to deduce 
the preceding uniqueness result (a former conjecture of Knop) from that classification. 

\begin{example}\label{ex:sph}
The spherical subgroups of $G = \SL_2$ are exactly the closed subgroups of positive dimension. 
Here is the list of these subgroups up to conjugation in $G$:

\medskip

\noindent
(i) $H = B$ (the Borel subgroup of upper triangular matrices of determinant $1$).
Then $G/H \cong \bP^1$ has rank $0$ and a unique color, the $B$-fixed point $\infty$. 

\medskip

\noindent
(ii) $H = U \mu_n$ where $U$ denotes the unipotent part of $B$, and $\mu_n$ the group of diagonal
matrices with eigenvalues $\zeta,\zeta^{-1}$ where $\zeta^n = 1$; here $n$ is a positive integer. 
Then $H \subset B$ and via the resulting map $G/H \to G/B$, we see that $G/H$ is the total space 
of the line bundle $O_{\bP^1}(n)$ minus the zero section. Moreover, $G/H$ has rank $1$ and a unique color, 
the fiber at $\infty$ of the projection to $\bP^1$. We have 
$\Lambda(G/H) = n \bZ \subset \bZ = \Lambda$, and the valuation cone is the whole space 
$\Hom\big( \Lambda(G/H), \bQ \big) \cong \bQ$. 

A smooth equivariant completion of $G/H$ is $\bP \big( O_{\bP^1} \oplus O_{\bP^1}(n) \big)$,
the rational ruled surface of index $n$. By contracting the unique curve of negative 
self-intersection, i.e., the section of self-intersection $-n$, we obtain another equivariant
completion which is singular if $n \neq 1$, and isomorphic to $\bP\big( V(1) \oplus V(0) \big) \cong \bP^2$
if $n = 1$.

\medskip

\noindent
(iii) $H = T$ (the torus of diagonal matrices of determinant $1$). Then 
$G/H \cong G \cdot xy \subset V(2)$ is the affine quadric $(\Delta = 1)$; it has rank $1$ 
and weight lattice $2 \bZ$. Note that $H$ is the intersection of $B$ with the opposite Borel
subgroup $B^-$ (of lower triangular matrices). Thus, $G/H$ admits $\bP^1 \times \bP^1$ as an 
equivariant completion via the natural morphism $G/H \to G/B \times G/B^-$; this is in fact the unique
non-trivial equivariant embedding. Also, $G/H$ has two colors $D^+, D^-$ (the fibers at $\infty$ of the 
two morphisms to $\bP^1$). The valuation cone is the negative half-line in 
$\Hom\big( \Lambda(G/H), \bQ \big) \cong \bQ$, and $D^+,D^-$ are mapped to the same point of the positive 
half-line.

\medskip

\noindent
(iv) $H = N_G(T)$ (the normalizer of $T$ in $G$). Then $G/H \cong G \cdot [xy] \subset \bP\big( V(2) \big)$ 
is the open affine subset $(\Delta = 0)$ in the projective plane $\bP\big( V(2) \big)$, which is the unique
non-trivial embedding. Moreover, $G/H$ has rank $1$ and weight lattice $4 \bZ$. There is a unique color $D$, 
with closure the projective line $\bP\big( y V(1) \big) \subset \bP\big( V(2) \big)$. The valuation cone is 
again the negative half-line in $\Hom\big( \Lambda(G/H), \bQ \big) \cong \bQ$, and $D$ is mapped to a point 
of the positive half-line.

\end{example}

\subsection{Towards a classification of wonderful varieties}
\label{subsec:cwv}

In this subsection, we introduce the class of wonderful varieties, which play an essential role 
in the structure of spherical varieties. Then we present a number of recent works that classify 
wonderful varieties (possibly with additional assumptions) via Lie-theoretical or geometric methods.  

\begin{definition}

A variety $X$ is called {\bf wonderful} if it satisfies the following properties:

\begin{enumerate}

\item{}
$X$ is smooth and projective.

\item{}
$X$ is equipped with an action of a connected reductive group $G$ having an open orbit $X_0$.

\item{}
The boundary $D := X \setminus X_0$ is a divisor with simple normal crossings, and its irreducible
components $D_1,\ldots,D_r$ meet.

\item{}
The $G$-orbit closures are exactly the partial intersections of $D_1,\ldots,D_r$.

\end{enumerate}

\end{definition}

Then $D_1,\ldots,D_r$ are called the \emph{boundary components}; their number $r$ is the \emph{rank} 
of $X$. By (4), $X$ has a unique closed orbit 
$$
Y := D_1\cap \cdots \cap D_r.
$$

The wonderful $G$-varieties of rank $0$ are just the complete $G$-homogeneous varieties, i.e., the
homogeneous spaces $G/P$ where $P \subset G$ is a parabolic subgroup containing $B$. Those of rank $1$ 
are exactly the smooth equivariant completions of a homogeneous space by a homogeneous divisor; they have been 
classified by Akhiezer (see \cite{Ak83}) and they turn out to be spherical. The latter property extends to all ranks: 
in fact, \emph{the wonderful varieties are exactly the complete log homogeneous varieties having a
unique closed orbit}, as follows from \cite{Lu96} combined with \cite[Propositions 2.2.1 and 2.5]{BB96}. 
Moreover, the rank of a wonderful variety coincides with the rank of its open $G$-orbit.

The combinatorial invariants $\Lambda(X_0)$, $\cV(X_0)$, $\cD(X_0)$ associated with the open $G$-orbit
of a wonderful variety $X$ admit simple geometric interpretations. To state them, let $X_1$ denote the 
complement in $X$ of the union of the closures of the colors. Then $X_1$ is an open 
$B$-stable subset of $X$. One shows that $X_1$ is isomorphic to an affine space and meets each $G$-orbit 
in $X$ along its open $B$-orbit; it easily follows that 
\emph{the closure in $X$ of each color $D \in \cD(X_0)$ is a base-point-free divisor, and these divisors form 
a basis of the Picard group of $X$}. In particular, we have equalities in $\Pic(X)$:
\begin{equation}\label{eqn:pic}
D_i = \sum_{D \in \cD} c_{D,i} \, D \quad (i=1,\ldots,r)
\end{equation}
where the $c_{D,i}$ are uniquely determined integers. Also, $X_1 \cap Y$ is the open Bruhat cell in $Y$, 
and hence equals $B \cdot y$ for a unique $T$-fixed point $y \in Y$. Thus, $T$ acts in the normal space 
$T_y (X)/T_y (Y)$ of dimension $r = \rk(X)$; one shows that the corresponding weights $\sigma_1,\ldots,\sigma_r$ 
(called the {\bf spherical roots} of the wonderful variety $X$) are linearly independent. Now 
\emph{the spherical roots form a basis of $\Lambda(X_0)$, and generate the dual cone to $\cV(X_0)$}. 
The dual basis of $\Hom \big( \Lambda(X_0), \bQ \big)$ consists of the opposites of the valuations $v_1,\ldots,v_r$
associated with the boundary divisors. Moreover, (\ref{eqn:pic}) implies that the map 
$\varphi : \cD \to \Hom \big( \Lambda(X_0), \bQ \big)$ is given by 
$$
\varphi(v_D) = \sum_{i=1}^r c_{D,i} \, v_i \quad (D \in \cD).
$$

To each spherical homogeneous space $X_0 = G/H$, one associates a wonderful variety as follows.
Denote by $N_G(H)$ the normaliser of $H$ in $G$, so that the quotient group $N_G(H)/H$ is isomorphic 
to the equivariant automorphism group $\Aut^G(X_0)$. Since $X_0$ is spherical, the algebraic group
$N_G(H)/H$ is diagonalizable; moreover, $N_G(H)$ equals the normalizer of the Lie algebra $\fh$. 
Thus, the homogeneous space $G/N_G(H)$ is the $G$-orbit of $\fh$ viewed as a point of the Grassmannian 
variety $\Gr(\fg)$ of subspaces of $\fg$ (or alternatively, of the scheme of Lie subalgebras of $\fg$). 
The orbit closure 
$$
X:= \overline{G \cdot \fh} \subset \Gr(\fg)
$$ 
is a projective equivariant completion of $G/N_G(H)$, called the \emph{Demazure embedding} of that
homogeneous space. In fact, \emph{the variety $X$ is wonderful} by a result of Losev (see \cite{Lo09c})
based on earlier results of several mathematicians, including Demazure and Knop (see \cite[Corollary 7.2]{Kn96}). 
Moreover, by embedding theory of spherical homogeneous spaces, \emph{the log homogeneous embeddings 
of $G/H$ are exactly those smooth equivariant embeddings that admit a morphism to $X$; then the 
logarithmic tangent bundle is the pull-back of the tautological quotient bundle on $\Gr(\fg)$}.
Also, by embedding theory again, \emph{a complete log homogeneous variety $X'$ is wonderful if and only 
if the morphism $X' \to X$ is finite.}

It follows that every spherical homogeneous space $G/H$ such that $H = N_G(H)$ admits a wonderful
equivariant completion; in the converse direction, if $G/H$ admits such a completion $X$, 
then $X$ is unique, and the quotient $N_G(H)/H$ is finite. In particular, the center of $G$ acts on $X$ 
via a finite quotient; thus, one may assume that $G$ is semi-simple when considering wonderful $G$-varieties.

Since the $G$-variety $\Gr(\fg)$ contains only finitely many isomorphism classes of spherical $G$-orbits, 
and any $G$-homogeneous space admits only finitely many finite equivariant coverings, we see that
\emph{the number of isomorphism classes of wonderful $G$-varieties is finite} (for a given group $G$). 
Also, note that the wonderful varieties are exactly those log homogeneous varieties that are 
{\bf log Fano}, i.e., the determinant of the logarithmic tangent sheaf is ample.

To classify wonderful $G$-varieties, it suffices to characterize those triples $(\Lambda,\cV,\cD)$
that occur as combinatorial invariants of their open $G$-orbits, in view of Losev's uniqueness result. 
In fact, part of the information contained in such triples is more conveniently encoded by abstract 
combinatorial objects called {\bf spherical systems}. These were introduced by Luna, who obtained a complete 
classification of wonderful $G$-varieties for those groups $G$ of type $A$, in terms of spherical systems only.
For an arbitrary group $G$, Luna also showed how to reduce the classification of spherical $G$-homogeneous spaces
to that of wonderful $G$-varieties, and he conjectured that 
\emph{wonderful varieties are classified by spherical systems} (see \cite{Lu01}).

Luna's Lie theoretic methods were further developed by Bravi and Pezzini to classify wonderful varieties in classical 
types $B,C,D$ (see \cite{BP05, BP09}); the case of exceptional types $E$ was treated by Bravi in \cite{Bra07}. 
Thus, Luna's conjecture has been checked in almost all cases. The exceptional type $F_4$ is considered by Bravi and 
Luna in \cite{BL08}; they listed the 266 spherical systems in that case, and they constructed many examples of 
associated wonderful varieties.

Luna's conjecture has also been confirmed for those wonderful $G$-varieties that arise as orbit closures
in projectivizations of simple $G$-modules, via a classification due to Bravi and Cupit-Foutou (see \cite{BC10}). 
These wonderful varieties are called {\bf strict}; they are characterized by the property that the isotropy 
group of each point equals its normalizer, as shown by Pezzini (see \cite[Theorem 2]{Pe07}).
In \cite{BC08}, Bravi and Cupit-Foutou applied that classification to explicitly describe certain
moduli schemes of spherical varieties with a prescribed weight monoid; we now survey their results.   

We say that a submonoid $\Gamma \subset \Lambda^+$ is $G$-{\bf saturated}, if 
$$
\Gamma = \Lambda^+ \cap \bZ \Gamma
$$ 
where $\bZ \Gamma$ denotes the subgroup of $\Lambda$ generated by $\Gamma$. Then $\Gamma$ is finitely generated,
and saturated in the sense arising from toric geometry. By \cite{Pa97}, the $G$-saturated submonoids of
$\Lambda^+$ are exactly the weight monoids of those affine horospherical $G$-varieties $Z_0$ such that 
$Z_0$ is normal and contains an open $G$-orbit with boundary of codimension $\geq 2$. 

Next, fix a $G$-saturated submonoid $\Gamma \subset \Lambda^+$ that is freely generated, with basis
$\lambda_1,\ldots,\lambda_N$. Consider the associated moduli scheme $\M_{\Gamma}$ 
equipped with the action of the adjoint torus $T_{\ad}$ (Subsection \ref{subsec:mav}).
Then $\M_{\Gamma}$ is an open subscheme of $\Hilb^G_h(V)$ by Example \ref{ex:free}, where 
$V = V(\lambda_1)^* \oplus \cdots \oplus V(\lambda_N)^*$ and $h = h_{\Gamma}$.

By the results of \cite[Section 2.3]{BC08}, \emph{$\M_{\Gamma}$ is isomorphic to a $T_{\ad}$-module with linearly 
independent weights, say $\sigma_1,\ldots,\sigma_r$. Moreover, the union of all non-degenerate $G$-subvarieties 
$$
Z \subset V(\lambda_1)^* \oplus \cdots \oplus V(\lambda_N)^*
$$ 
with weight monoid $\Gamma$ is the affine multi-cone over a wonderful $G$-variety 
$$
X \subset  \bP \big( V(\lambda_1)^* \big) \times \cdots \times \bP\big( V(\lambda_N)^* \big)
$$
which is strict and has spherical roots $\sigma_1,\ldots,\sigma_r$.} The closures of the colors 
of $X$ are exactly the pull-backs of the $B$-stable hyperplanes in $\bP \big( V(\lambda_i)^* \big)$
(defined by the highest weight vectors of $V(\lambda_i)$) under the projections
$X \to \bP \big( V(\lambda_i)^* \big)$. 

To prove these results, one first studies the tangent space to $\M_{\Gamma}$ at the horospherical degeneration
$Z_0$, based on \ref{prop:orb}. This $T_{\ad}$-module turns out to be multiplicity-free with weights 
$\sigma_1,\ldots,\sigma_r$ among an explicit list of spherical roots. Then one shows that the data of 
$\lambda_1,\ldots,\lambda_N$ and $\sigma_1,\ldots,\sigma_r$ define a spherical system; finally, by the 
classification of strict wonderful varieties, this spherical system corresponds to a unique such variety $X$.  

Yet several $G$-saturated monoids may well yield the same strict wonderful variety, for instance in the case
that $\Gamma$ has basis a dominant weight $\lambda$ (any such monoid is $G$-saturated); see the final example below.
 
Another natural example of a $G$-saturated monoid is the whole monoid $\Lambda^+$ of dominant weights.
The affine spherical varieties $Z$ having that weight monoid are called a {\bf model $G$-variety}, as every
simple $G$-module occurs exactly once in $\cO(Z)$; then the horospherical degeneration of $Z$ is $Z_0 = G/\!/U$.
The strict wonderful varieties associated with model varieties have been described in detail by Luna
(see \cite{Lu07}).

More recently, Cupit-Foutou generalizes the approach of \cite{BC08} in view of a geometric classification of wonderful 
varieties and of a proof of Luna's conjecture in full generality (see \cite{Cu09}). For this, she associates with any 
wonderful variety of rank $r$ a family of (affine) spherical varieties over the affine space $\bA^r$, having a 
prescribed free monoid. Then she shows that this family is the universal family.

The first step is based on the construction of the {\bf total coordinate ring} (also called the {\bf Cox ring})
of a wonderful variety $X$. Recall that the set $\cD$ of (closure of) colors freely generates the Picard group of 
$X$, and consider the $\bZ^{\cD}$-graded ring
$$
R(X) := \bigoplus_{(n_D) \in \bZ^{\cD}} H^0 \big( X, \cO_X( \sum_{D \in \cD} n_D D) \big)
$$
relative to the multiplication of sections. We may assume that $G$ is semi-simple and simply connected; then 
each space $H^0 \big( X, \cO_X(\sum_{D \in \cD} n_D D)$ has a unique structure of a $G$-module, and the total 
coordinate ring $R(X)$ is a $\bZ^{\cD}$-graded $G$-algebra. It is also a finitely generated unique factorization 
domain. Moreover, the invariant subring $R(X)^U$ is freely generated by the canonical sections $s_D$ of the colors
and by those $s_1, \ldots, s_r$ of the boundary divisors; each $s_i$ is homogeneous of weight $(c_{D,i})_{D \in \cD}$.
Each $s_D$ is a $B$-eigenvector of weight (say) $\omega_D$, and hence generates a simple submodule
$$ 
V_D \cong V(\omega_D) \subset H^0 \big( X, \cO_X(D) \big).
$$
As a consequence, the graded ring $R(X)$ is generated by $s_1,\ldots,s_r$ and by the $V_D$ where $D \in \cD$.
Moreover, $s_1,\ldots,s_r$ form a regular sequence in $R(X)$ (see \cite[Section 3.1]{Bri07a}).

In geometric terms, the affine variety $\tX := \Spec\big( R(X) \big)$ is equipped with an action of the 
connected reductive group $\tG := G \times (\bG_m)^{\cD}$ and with a flat, $G$-invariant morphism
\begin{equation}\label{eqn:tcf}
\pi = (s_1,\ldots,s_r) : \tX \longrightarrow \bA^r
\end{equation}
which is also $(\bG_m)^{\cD}$-equivariant for the linear action of that torus on $\bA^r$ with weights
$\sum_{D\in \cD} c_{D,i} \varepsilon_D$ where $i = 1,\ldots, r$ and $\varepsilon_D :(\bG_m)^{\cD} \to \bG_m$ 
denotes the coefficient on $D$. Moreover, the $\tG$-subvariety $\tX$ is spherical and equipped with a closed 
immersion into the $\tG$-module $\big( \bA^r \times \prod_{D \in \cD}  V_D\big)^*$
that identifies $\pi$ with the projection to $(\bA^r)^* \cong \bA^r$. Here $(\bG_m)^{\cD}$ acts on 
$\prod_{D \in \cD} V_D^*$ via multiplication by $-\varepsilon_D$ on $V_D^*$. 

It follows that $\pi$ may be viewed as a family of non-degenerate spherical $G \times C$-subvarieties of 
$$
V := \bigoplus_{D \in \cD}  V_D^*
$$
where $C$ denotes the neutral component of the kernel of the homomorphism 
$$
(\bG_m)^{\cD} \longrightarrow (\bG_m)^r, \quad 
(t_D)_{D \in \cD} \longmapsto \big( \prod_{D \in \cD} t_D^{c_{D,i}} \big)_{i=1,\ldots,r}.
$$ 
Thus, $C$ is a torus, and $G \times C$ a connected reductive group with maximal torus $T \times C$ and adjoint 
torus $T_{\ad}$. The weight monoid $\Gamma$ is freely generated by the restrictions to $T \times C$
of the weights $(\omega_D,\varepsilon_D)$ of $T \times (\bG_m)^{\cD}$, where $D \in \cD$.

Now the main results of \cite{Cu09} assert that the moduli scheme $\M_{\Gamma}$ is isomorphic to $\bA^r$, and 
$\tX$ to the universal family. Moreover, $X$ is the quotient by $(\bG_m)^{\cD}$ of the union of non-degenerate 
orbits (an open subset of $\tX$, stable under $\tG$). In particular, the wonderful $G$-variety $X$ is uniquely 
determined by the monoid $\Gamma$.
 
As in \cite{BC08}, the first step in the proof is the determination of $T_{Z_0}(M_{\Gamma})$. Then a new 
ingredient is the vanishing of $T^2(X_0)^G$, an obstruction space for the functor of invariant infinitesimal 
deformations of $X_0$. This yields the smoothness of $\M_{\Gamma}$ at $Z_0$, which implies easily the
desired isomorphism $\M_{\Gamma} \cong \bA^r$.

\begin{example}\label{ex:won}
Let $G = \SL_2$ as in Example \ref{ex:sph}. Then the wonderful $G$-varieties $X$ are those of the following
list, up to $G$-isomorphism:

\medskip 

\noindent
(i) $\bP^1 = G/B$. Then $X$ is strict of rank $0$: it has no spherical root. Moreover, 
$R(X) = \Sym\big( V(1) \big)$, $\tG \cong \bG_m \times G$, and $\tX = V(1)$ where $\bG_m$ acts via scalar 
multiplication; the map (\ref{eqn:tcf}) is constant.

\medskip 

\noindent
(ii) $\bP^1 \times \bP^1$, of rank $1$ with open orbit $G/T$ and closed orbit $Y = G/B$ embedded as the
diagonal. Then $X$ is not strict, of rank $1$, and its spherical root is the simple root $\alpha$;
we have $Y = D^+ + D^-$ in $\Pic(X)$. 
Moreover, $R(X) \cong \Sym\big( V(1) \oplus V(1) \big)$, $\tG = G \times (\bG_m)^2$, and 
$\tX = V(1) \oplus V(1)$ where $(\bG_m)^2$ acts via componentwise multiplication, and $G$ acts diagonally.
The map (\ref{eqn:tcf}) is the determinant. The torus $C$ is one-dimensional, and the monoid $\Gamma$ 
has basis $(1,1)$ and $(1,-1)$.

\medskip 

\noindent
(iii) $\bP^2 = \bP\big( V(2) \big)$, of rank $1$ with open orbit $G/N_G(T)$ and closed orbit $Y = G/B$ embedded 
as the conic $(\Delta = 0)$. Here $X$ is strict, of rank $1$, and its spherical root is $2\alpha$; we have
$Y = 2D$ in $\Pic(X)$. Moreover, $R(X) = \Sym\big( V(2) \big)$,  $\tG \cong G \times \bG_m$, and $\tX = V(2)$ 
where $\bG_m$ acts via scalar multiplication. The map (\ref{eqn:tcf}) is the discriminant $\Delta$. The torus 
$C$ is trivial, and $\Gamma$ is generated by $2$. We have $\M_{\Gamma} = \Hilb^G_{h_2}\big( V(2) \big) \cong \bA^1$.
Note that the monoid generated by $4$ yields the same wonderful variety.  
\end{example}

\end{document}